\newtheorem{thm}{Theorem}[section]
\newtheorem{lem}[thm]{Lemma}
\newtheorem{coro}[thm]{Corollary}
\newtheorem{prop}[thm]{Proposition}
\theoremstyle{definition}
\newtheorem{df}[thm]{Definition}
\newtheorem{ex}[thm]{Example}
\newenvironment{newlist}
   {\begin{list}{}{\setlength{\labelsep}{0.25cm}
\setlength{\itemsep}{-0.1cm}
\setlength{\topsep}{0.1cm}
                   \setlength{\labelwidth}{0.65cm}
                      \setlength{\leftmargin}{0.9cm}}}
   {\end{list}}
\newcommand{\twiddle}[1]{\smash{\underset{\raise.375ex\hbox{$\smash\sim$}}
       {#1}}\vphantom{\underline{#1}}}
\newcommand{\twiddleodd}{
\smash{\underset{\lower.5ex\hbox{$\smash{\widetilde{\phantom{mm}}}
$}
}{\Zed_{2n+1}}}
}
\newcommand{\twiddleeven}{
\smash{\underset{\lower.65ex\hbox{$\smash{
\widetilde{\phantom{rr\!\!r}}}
$}
}{\Zed_{2n}}}}
\newcommand{\twiddlek}{\smash{\underset{\lower.4ex\hbox{$\smash{
\widetilde{\phantom{w\!\!u}}}
$}
}{\Zed_{k}}}}
\newcommand{\delp}{\alpha^+}  
\newcommand{\delm}{\alpha^-}     
\newcommand{\class}[1]{\mathcal{#1}}
\newcommand{\cat}[1]{\boldsymbol{\mathscr{#1}}}  
\newcommand{\alg}[1]{\mathbf{#1}}
\newcommand{\fnt}[1]{\mathsf{#1}}
\newcommand{\E}{\fnt{E}}
\newcommand{\D}{\fnt{D}}
\newcommand{\ope}[1]{\mathbb{#1}}
\newcommand{\defn}[1]{\emph{#1}}
\newcommand{\A}{\alg{A}}
\newcommand{\B}{\alg{B}}
\newcommand{\C}{\alg{C}}
\newcommand{\M}{\alg{M}}
\newcommand{\X}{\alg{X}}
\newcommand{\Y}{\alg{Y}}
\newcommand{\Z}{\alg{Z}}
\newcommand{\K}{\class{K}}
\newcommand{\Tp}{\mathcal{T}}
\newcommand{\MT}{\twiddle{\M}}
\newcommand{\Zed}{\alg{Z}}
\newcommand{\two}{\boldsymbol 2}
\newcommand{\SA}{\cat{S\!A}}  
\newcommand{\CA}{\cat{A}}
\newcommand{\CX}{\cat{X}}
\newcommand{\CY}{\cat{Y}}  
\newcommand{\F}{\alg{F}}  
\newcommand{\w}{\omega}
\renewcommand{\epsilon}{\varepsilon}
\newcommand{\sub}[1]{_{_{\kern-.9pt{\scriptstyle #1}}}}
\newcommand{\medsub}[2]{#1\lower0.6ex\hbox{$\scriptstyle{#2}$}}
\newcommand{\eA}[1]{\medsub e {\kern-0.75pt\A\kern-0.75pt}(#1)}
\newcommand{\esub}[1]{\medsub e {\kern-0.75pt #1 \kern-0.75pt}}
\newcommand{\epsub}[1]{\medsub \varepsilon {\kern-1.25pt #1}}
\newcommand{\esubA}{\medsub e {\kern-0.75pt\A\kern-0.75pt}}
\DeclareMathOperator{\ISP}{\ope{ISP}}
\DeclareMathOperator{\IScP}{\ope{IS}_c\ope{P}^{+}}
 \DeclareMathOperator{\HSP}{\ope{HSP}}
 \DeclareMathOperator{\dom}{\rm dom}
\DeclareMathOperator{\graph}{\rm graph}
\DeclareMathOperator{\sgn}{\rm sgn}
\newcommand{\id}{\mathop{}\mathopen{}\mathrm{id}} 
\DeclareMathOperator{\img}{im}
\newcommand{\im}{\img}
\DeclareMathOperator{\End}{End}  
\newcommand{\PEZ}[1]{\End_{\text{p}} (\Zed_{#1})}
\newcommand{\proc}[1]{{\textsc{#1}}}
\newcommand{\tafa}{\emph{TAFA}}
\newcommand{\newcirc}{\,{\mathbin{\circ }\,\,}}
\newcommand{\xdoubleheadleftarrow}[1]{%
\leftarrow\mathrel{\mspace{-15mu}}{\xleftarrow{#1}}
}
\newcommand{\bvec}[1]{\text{\bf  {#1}}}
\newcommand{\eq}{\,{\approx} }
\renewcommand{\leq}{\leqslant}
\renewcommand{\geq}{\geqslant}
\begin{document}

\begin{frontmatter}

\title{Sugihara Algebras: Admissibility Algebras via the Test Spaces Method}

\author{\small \textit{Leonardo M. Cabrer}}
\address{}
\ead{lmcabrer@yahoo.com.ar}

\author[hap]{Hilary A. Priestley\corref{cor1}}
\ead{hap@maths.ox.ac.uk}
\address[hap]{Mathematical Institute, University of Oxford, Radcliffe Observatory Quarter,
Oxford, OX2 6GG, United Kingdom}

\cortext[cor1]{corresponding author}


\begin{abstract}   
This paper studies finitely generated quasivarieties of Sugihara algebras.  These quasivarieties provide complete algebraic semantics for certain propositional logics associated with the relevant logic R-mingle. 
The motivation for the paper comes from the study of admissible rules.
Recent earlier work by the present authors, jointly with Freisberg and Metcalfe,
laid the  theoretical foundations for a feasible approach to this problem for a range of logics---the Test Spaces Method.  The method, based on natural duality theory, provides an algorithm to obtain 
 the
algebra of  minimum size on which admissibility of sets of rules can be tested.  
(In the most general case a set of such algebras may be needed rather than just one.)  
The method enables us to identify 
this  `admissibility algebra' 
for each quasivariety of Sugihara algebras which is  generated by an algebra whose underlying lattice is a finite chain.  
To achieve our goals, it was first necessary to develop a (strong)  duality for 
each of these quasivarieties.  
The dualities promise also to  
also provide a valuable
new tool for studying the structure of Sugihara algebras more widely.  
 \end{abstract}

\begin{keyword}   Sugihara algebra \sep   natural duality \sep 
quasivariety \sep admissibility \sep free algebra

\MSC[2010]  Primary:  03G25,  
Secondary: 
03B47,  
08C15,  
08C20  
\end{keyword}

\end{frontmatter}

\section{Introduction} \label{sec:intro}

This paper investigates finitely generated quasivarieties of Sugihara algebras.  
These algebras have attracted interest because they are associated with the
logic R-mingle, for which they provide complete algebraic semantics (see \cite{Du70,AB75,GM16}). 
The definition of these algebras is given in Section~\ref{Sec:Sugihara}. 

In a wider context, algebraic methods have been used for a long time to solve general  problems in logic for a variety of propositional logics, most famously classes of modal and intuitionistic logics.
However, often an algebraic semantics is too close to the
logic it models to provide a powerful tool.  
An exception is seen with the study of admissible rules when a suitable  algebraic semantics is available.  
Once it is observed that some  logical system has admissible non-derivable rules, 
it is natural to seek 
 a description of such admissible rules. 
Usually, this is done through an axiomatization.  That is, the provision of 
 a set of rules that, once these are
 added to those of the original system, its admissible rules become derivable; see \cite{Iem01,Roz92} (intuitionistic logic), \cite{Iem05,CM10} (intermediate logics),  \cite{Jer05} (transitive modal logics), \cite{Jer09a,Jer09b} (\L ukasiewicz many-valued logics).
In particular, Metcalfe~\cite{GM16} presented axiomatizations for the admissible rules of various fragments of the logic R-mingle. 
However, the problem of finding such axiomatizations for R-mingle remains open.  
This paper does not directly address this problem  but does develop tools whereby a rule's admissibility may be tested.

 Assume that we have a quasivariety of the form 
$\CA = \ISP(\M)$, where $\M$ is a finite algebra 
which has an $s$-element subset which generates it.
Then the free algebra $\F _{\CA}(s)$ on~$s$ generators 
can be used to test whether some given rule 
 for the associated logic is admissible (see for example \cite[Theorem~2]{CM15}).  
However, unless both~$\M$ and~$s$ are 
very small, this free algebra is likely to be extremely large:   no explicit
description may be available and often even the size
 cannot  be computed.  
Hence  this result on validating admissibility algebraically  is likely to be principally of theoretical interest.  It  does  prove that the admissibility problem is decidable, but it will not always be feasible to solve it  in practice.  

A breakthrough came with the demonstration 
by Metcalfe and R\"othlisberger  
\cite{MR13} and R\"othlis\-berger \cite{Roe13}
that validity of a rule 
 could be tested on a much smaller algebra 
(or set of algebras).
Proceeding 
syntactically, 
one searches for the minimal set of algebras, in a suitable multiset order, that generate the same quasivariety as does  
 $\F _{\CA}(s)$.
This \defn{admissibility set}   for~$\CA$  
has been proved to exist and to be unique up to isomorphism 
\cite[Theorem~4]{MR13}. 
 When this set is a singleton, we refer
to its unique element as the \defn{admissibility algebra}. 
In \cite{MR13} and \cite{Roe13}  an algorithm was developed to calculate the admissibility set. 
This algorithm is implemented in the \tafa\ package\footnote{Available at: \url{https://sites.google.com/site/admissibility/downloads}}.  
However, the success of 
\tafa\ in a given case hinges on the free algebra
$\F _{\CA}(s)$
not being too large  and  a search involving all subalgebras of that  free algebra
being computationally feasible.

A further advance in methods to  determine the admissibility set
came with the 
exploitation of duality theory.  It is very well known that the development of relational semantics revolutionised the investigation of modal and intuitionistic 
logics.  Relational semantics, and topological relational semantics, are especially 
powerful when the underlying algebraic semantics  are based on distributive 
lattices with additional operations which model, for example, a non-classical
implication or negation.   In this situation there will exist a dual equivalence 
between~$\CA$ and a category~$\CY$ of enriched Priestley spaces;  an associated 
relational semantics is obtained from~$\CY$ by suppressing the topology.  
What is significant is the computational advantages this passage to a dually equivalent category brings:  the functors  setting up the 
 dual adjunction between
$\CA$ and $\CY$ act like a `logarithm' from algebras to dual structures and an `exponential' in the other direction.  
However,  in general 
 we cannot expect  a duality based on enriched Priestley duality to yield
a smooth translation into an equivalent dual form of the strategy devised in
\cite{MR13}. 

 This problem can be overcome.  Instead of a hand-me-down duality based on Priestley duality, we need a duality more closely  tailored to the finitely generated
quasivariety $\ISP(\M)$.  The theory of natural dualities is available,  and supplies exactly the right machinery, provided we can set up a duality 
based on a strongly dualising alter ego~$\MT$. 
A preliminary exploration of the idea of using natural dualities to 
study admissible rules
was undertaken by Cabrer and Metcalfe~\cite{CM15}, drawing 
on well-known natural dualities  (for   De Morgan algebras, in particular).
Encouraged by the 
evidence in \cite{CM15},
Cabrer \textit{et al.}~\cite{CFMP} undertook 
 a more extensive study.
The setting is finitely generated quasivarieties $\ISP(\M)$ for which strong 
dualities
are available.  
 Under a strong duality,
 injective homomorphisms correspond to surjective morphisms on the dual side,
and surjective homomorphism correspond to embeddings.  In addition, the dual space of the free algebra on~$s$ generators is the $s^{\text{th}}$ power of the alter ego $\MT$.  These properties enable \proc{MinGenSet} and 
\proc{SubPreHom}, 
the component algorithms from {\tafa}, 
 to be recast 
in equivalent dual forms.
The Test Spaces Method (TSM for short)  can thereby  be formulated and validated.
 In outline TSM  is
an algorithm for determining an admissibility algebra (or admissibility set):
it tells us how to find a \defn{test space}  that  is the  dual space of the 
admissibility algebra we seek (or, likewise,  a set of test spaces).  Only at the final step 
does one  pass back from the dual category to the original category $\ISP(\M)$.

The theory in \cite{CFMP} is accompanied by a suite of case studies illustrating the Test Spaces Method in action, with the focus on computational
feasibility.
 The studies include  a reprise on  De Morgan algebras 
and 
progress to more complex examples of similar type. This confirmed 
that TSM,
 with or without computer assistance, can successfully find
admissibility algebras in examples  which have been shown to be beyond \tafa\'s reach. 
The largest of the admissibility algebras found in \cite{CFMP}, that for involutive Stone algebras, has 20 elements.  There the free algebra on two generators with which \tafa\ had to contend has $1\, 483\, 648$ elements.  
Small wonder that \tafa\  was not up to the task.

We shall reveal in this paper  
that  the variety $\SA$ of Sugihara algebras provides a powerful  demonstration of the capabilities of  the Test Spaces Method.  
We seek  to   describe the admissibility algebra for each  subquasivariety $\SA_k$ of~$\SA$ which is generated by a finite subdirectly
 irreducible algebra:  $\SA_k
= \ISP(\Zed_k)$, 
  where  the lattice reduct of  $\Zed_k$ is a $k$-element  chain.
The family of quasivarieties $\{\SA_k\}$ can be 
viewed (see Figure~\ref{fig:QuasiVar}) as forming  two interlocking chains
$\{\SA_{2n+1}\}_{n\geq 0}$ (the \emph{odd case}) and $\{ \SA_{2n}\}_{n\geq 1} $ (the \emph{even case}).  
   A minimal generating set for $\Zed_{2n+1}$ has  $(n+1)$ elements  
and   one for $\Zed_{2n}$ has~$n$ elements.
So both the size of  the generating algebra  $\Zed_k$ for $\SA_k$ and the size of 
a minimal generating subset  for this algebra 
  tend to infinity with~$k$. Nevertheless, we are able to achieve our goal of identifying the admissibility algebra for each $\SA_k$.
To accomplish this 
 we first
 have to develop the strong dualities we need.  
The dualities  we obtain are pleasingly simple and 
exhibit a uniform pattern in the odd case and in the even case.
This  uniformity works to our advantage 
in executing TSM.  Moreover, these dualities are  of potential value beyond the application that led us to derive~them.

The existing literature includes studies of Sugihara monoids, in which the language
contains a constant, as well as of Sugihara algebras.  Note in particular 
\cite{BP89,FG17}.
 Our techniques apply equally well to the monoid case.  
Only minor adaptations to the results and their proofs would be needed.

The paper is organised as follows.  
Section~\ref{Sec:Sugihara} assembles  definitions and algebraic facts about Sugihara algebras.   
Some basic results are well known, but the discussion of partial endomorphism monoids, 
which 
underpin our dualities,  is new. 
We  treat the theory of natural dualities in black-box fashion:
 Section~\ref{Sec:NatDual} summarises the bare essentials and supplies references.
This primer is followed by  a brief 
exposition of the Test Spaces Method as presented in~\cite{CFMP}.  Thereafter, we work exclusively with Sugihara algebras,  separating the odd and even cases
since the differences between them are great enough to make this advantageous. 
  Sections~\ref{Sec:Duality-SugOdd} (duality) and~\ref{Sec:TSM-SugOdd} (TSM)
cover the odd case and, likewise, Sections~\ref{Sec:Duality-SugEven} and~\ref{Sec:TSM-SugEven} the even case.
 Propositions~\ref{Prop:AnOdd}  and~\ref{Prop:AnEven} 
present our admissibility algebras. 
Section~\ref{sec:sumup}  treats the odd and even cases together. We take stock of what we have achieved by  employing the Test Spaces Method as opposed to  its algebraic counterpart. 
Table~\ref{table:Sugcasestud} 
compares the sizes of  our admissibility algebras with the sizes of the test spaces from which we derived them and with lower bounds for the sizes of the 
associated  free algebras.
Already  for small values of~$k$  
the data are very striking.  
We see here  the double benefit that the TSM approach has conferred:  working with  a `logarithmic' duality and the ability to test for admissibility on an algebra of minimum size.

Having, we believe, conclusively demonstrated the virtues of admissibility algebras 
we analyse their structure more closely.  We present a canonical generating set 
with $s=\left[\frac{k+1}{2}\right]$ elements for the admissibility algebra 
$\B_k$ for the quasivariety $\SA_k$.  This both allows us to view $\B_k$ in a standalone manner and also allows us to capitalise on the way that
$\B_k$ arises both as a quotient and as a subalgebra of the free algebra on $s$ generators in $\SA_k$. 
We conclude by offering  a  glimpse of the application of our results to admissible rules.
 
 We would like to thank Prof.~George Metcalfe twice over.
Firstly, he  introduced  
 us to the  problem of finding admissibility algebras for Sugihara algebras.    Secondly, we have benefitted from  his ongoing interest in our work as this has progressed
and from comments that have influenced 
 the presentation of our  results.   In particular he has  contributed to  our understanding  of the  consequences of these results  for 
 the study of admissible rules of R-mingle logics.

\section{Sugihara algebras: preliminaries}\label{Sec:Sugihara}

Here we introduce the quasivarieties $\SA_k$ of Sugihara algebras 
in which we are interested and present  algebraic facts about their generating algebras.

We assume familiarity  with the basic notions of universal algebra, 
for which we recommend~\cite{BS12} for reference.
Let $\CA$ be a non-trivial class of algebras over some common language.  
Then $\CA$  is a \defn{variety} if it is the class of models for a set  of equations and a 
\defn{quasivariety} if it is the class of models for a set  of quasi-identities.   
Our focus in this paper is on quasivarieties (for which \cite{Gor98} provides a 
background reference).  We are interested in 
the special case 
of a class obtained from a non-trivial 
\emph{finite} algebra~$\M$.  
Specifically,  the class $\ISP(\M)  $ is a quasivariety
 (where  the class operators $\mathbb I$, $\mathbb{S}$ and $\mathbb{P}$
have their usual meanings:  they denote respectively the formation of 
  isomorphic images, subalgebras and products over a non-empty index set).  
We note that natural duality, in its simplest form and as we employed it in
\cite{CFMP}, applies to quasivarieties $\ISP(\M)$.  
Free algebras play a central role in our investigations.  
A well-known  result  from universal algebra
tells us that free algebras exist in any class  $\ISP(\M)$ and  serve also as free algebras in the variety generated by~$\M$
\cite[Chapter~II]{BS12}.

We now proceed to definitions.
Let $\Zed=(Z;\wedge,\vee,\to,\neg)$ be the algebra
whose universe $Z$ is the set of  integers, $(Z;\wedge,\vee)$  
is the lattice derived from the natural order on~$Z$,
and~$\neg$ and~$\to$ are defined by $\neg a = -a$ and 
\[
a\to b = \begin{cases}
(-a) \vee b & \mbox{if }a\leq b,\\
(-a) \wedge b & \mbox{otherwise.}
\end{cases}
\]
Modulus, given by  $|a| := a \lor \neg a$, or alternatively by $a \to a$, defines a term function.  This elementary fact is important later.  
Usually a fusion operator $\cdot$ is also considered.   It can be defined   by 
 $a\cdot b = \neg (a \to \neg b)$.  We shall not  make use of it.  

The variety of \defn{Sugihara algebras}  may be defined to be the variety generated by~$\Zed$ (see \cite{BD86}).  We denote it by~$\SA$.
We define a family  $\{ \Zed_k\}_{k\geq 1}$ of Sugihara subalgebras of $\Zed$ as follows:

\begin{center}
\begin{tabular}{lll}
subalgebra  \qquad & universe  \quad &    \\[.5ex]\hline
   &&\\[-1.5ex]
$\Zed_{2n+1}$  & $
\{\,a\in Z\mid -n \leq a \leq n\, \}$   & ($n = 0,1,2 , \ldots$) \\
 $\Zed_{2n}$  & $\Zed_{2n+1} \setminus \{ 0\} $  
&  ($n = 1,2, \ldots $)\\
\end{tabular}
\end{center}
We shall henceforth not distinguish in our notation between an algebra 
$\Zed_k$ and its universe.  Later we shall often encounter products
$(\Zed_k)^2$,  with   $k=2n$ or 
$k=2n+1$.  
To  avoid cumbersome notation we shall write $( \Zed_k)^2$ as 
$\Zed_k^2 $.  
A similar abuse of notation will be adopted for other powers.  

Let $\SA_{k}=\ISP(\Zed_{k})$ denote the quasivariety generated~$\Zed_{k}$. 
Here 
$\SA_1$ is the trivial quasivariety and $\SA_2$ is term-equivalent to Boolean algebras.  
The quasivariety $\SA_k$ is the algebraic counterpart of R-mingle's 
axiomatic extension $\text{RM}_k$;  see~\cite{Du70}.
Because of degeneracies,  the quasivarieties $\SA_k$ for $k < 6$ do not fully exhibit the
features seen with larger values of~$k$.  
For convenience we shall refer to consideration of the classes $\SA_{2n}$, 
for $n \geq 1$, as the even case and the study of $\SA_{2n+1}$, for $n \geq 0$, 
as the odd case.  
(We note that there are references in the literature to even (respectively, odd) Sugihara 
algebras for members of these classes.) 
We shall now proceed to a detailed discussion of basic algebraic properties,
separating the even and odd cases as necessary.   

By the definition of $\to$,  a non-empty subset $A$ of $Z$ is the universe of a subalgebra 
of $\Z$ if and only if it is closed under $\wedge, \vee,$ and $\neg$. 
Hence, any union of sets of the form $\{ a, \neg a\} $,  for  $a \in \Zed_{2n}$, 
is the universe of a subalgebra of~$\Zed_{2n}$.
This observation leads to   the following proposition.

\begin{prop}[subalgebras] \label{prop:subalg}
Let $n\geq 1$.   
\begin{enumerate}[(i)] 
\item There is an isomorphism from the lattice of subalgebras of $\Zed_{2n}$, 
augmented with the empty set, to the powerset $\mathcal{P}(\{1,2,\ldots, n\})$. 
Moreover, each proper subalgebra of $\Zed_{2n}$ is isomorphic to $\Zed_{2m}$ 
for some $m< n$.
\item There is an isomorphism  from the lattice of subalgebras of $\Zed_{2n+1}$, 
augmented with the empty set, 
to $\mathcal{P}(\{0,1,2,\ldots, n\})$. Moreover, each proper
subalgebra of $\Zed_{2n+1}$ is isomorphic to $\Zed_{k}$ for some $k< 2n+1$.
\end{enumerate}
\end{prop}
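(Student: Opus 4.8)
The plan is to reduce everything to the combinatorics of $\neg$-orbits, using the criterion recorded just before the proposition: a non-empty $A\subseteq Z$ is the universe of a subalgebra of $\Z$ precisely when it is closed under $\wedge$, $\vee$ and $\neg$. First I would observe that closure under $\neg$ forces any subalgebra to be a union of $\neg$-orbits. In $\Zed_{2n}$ these orbits are exactly the $n$ two-element sets $\{i,\neg i\}$ with $1\leq i\leq n$, while in $\Zed_{2n+1}$ there is the additional singleton orbit $\{0\}$ (note $\{0,\neg 0\}=\{0\}$). Conversely, a short case analysis on signs shows that every union of orbits is closed under $\wedge$ and $\vee$ (closure under $\neg$ being built in); for $\Zed_{2n}$ this is the remark already made in the text, and for $\Zed_{2n+1}$ the only new point is that if $0$ does not lie in the union then no $\wedge$ or $\vee$ of two of its elements can equal $0$. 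This yields the maps $S\mapsto\bigcup_{i\in S}\{i,\neg i\}$ from $\mathcal{P}(\{1,\ldots,n\})$ and $T\mapsto\bigcup_{i\in T}\{i,\neg i\}$ from $\mathcal{P}(\{0,1,\ldots,n\})$ onto the subalgebras of $\Zed_{2n}$, respectively $\Zed_{2n+1}$, together with the empty set. Each map and its inverse are inclusion-preserving, so they are lattice isomorphisms, which gives the first sentence of (i) and of (ii).

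For the ``moreover'' clauses I would argue that the isomorphism type of a subalgebra is determined by its cardinality. Every subalgebra $A$ is, as a poset, a finite chain, and $\neg$ restricts to an order-reversing bijection of that chain onto itself; but a finite chain admits a unique such bijection, namely the ``flip'' exchanging the $j$-th smallest and the $j$-th largest element (so $\neg$ has a fixed point on $A$ iff $|A|$ is odd, the fixed point then being the midpoint). Hence if $|A|=k$, the unique order-isomorphism $\phi$ from $A$ onto $\Zed_k$ automatically intertwines the two copies of $\neg$, and therefore also the lattice operations, since in a chain these are $\min$ and $\max$. Finally $\phi$ respects $\to$: for $a,b\in A$ one has $a\to b=\neg a\vee b$ or $a\to b=\neg a\wedge b$ according as $a\leq b$ or $a>b$, and as $\phi$ preserves $\leq$, $\neg$, $\vee$ and $\wedge$ it sends $a\to b$ to $\phi(a)\to\phi(b)$ in either case. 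Thus $A\cong\Zed_{|A|}$. Since a proper subalgebra of $\Zed_{2n}$ has $2m$ elements with $m<n$, and a proper subalgebra of $\Zed_{2n+1}$ has either $2m$ or $2m+1$ elements, this number being strictly less than $2n+1$, both ``moreover'' statements follow.

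The argument is essentially bookkeeping, and I do not anticipate a genuine obstacle. The one point deserving a moment's care is the claim that an abstract order-isomorphism between two finite chains of equal size is forced to respect the additional Sugihara structure; this rests on the two elementary facts used above, that an order-reversing self-bijection of a finite chain is the flip (so $\neg$ is rigid) and that the defining clauses for $\to$ involve only $\leq$, $\neg$, $\vee$ and $\wedge$.
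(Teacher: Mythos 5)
Your proof is correct and follows essentially the same route as the paper, which states the result as an immediate consequence of the observation that subalgebras are exactly the (non-empty) unions of the sets $\{a,\neg a\}$, the ``moreover'' clauses coming from the unique order-isomorphism of a subalgebra onto $\Zed_k$ automatically respecting $\neg$ and hence $\to$. One small simplification: closure under $\wedge$ and $\vee$ is automatic for \emph{any} subset of a chain (they are just $\min$ and $\max$), so your case analysis on signs is not needed.
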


\begin{prop}[homomorphisms, even case] \label{prop:phomEven} 
Let $m,n\geq 1$ and $h$ be a homomorphism from $\Zed_{2m}$  into $\Zed_{2n}$. 
Then $h$ is injective and $m\leq n$.

Furthermore, the only endomorphism of $\Zed_{2n}$ is the identity map.
\end{prop}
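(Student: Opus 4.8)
The plan is to reduce everything to two facts noted in Section~\ref{Sec:Sugihara}: the modulus $|x| := x \vee \neg x$ is realised by the term $x \to x$, and no element of $\Zed_{2n}$ is fixed by $\neg$, since $0 \notin \Zed_{2n}$. First I would observe that, as $|x| = x \to x$ is a term function, every homomorphism $h \colon \Zed_{2m} \to \Zed_{2n}$ satisfies $h(|x|) = |h(x)|$; and in each of these algebras $|y|$ is simply whichever of $y, \neg y$ is strictly positive. Hence $h$ is \emph{sign-preserving}: if $x > 0$ then $h(x) = h(|x|) = |h(x)| > 0$, and dually $x < 0$ forces $h(x) < 0$.

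Next I would prove injectivity. Suppose $h(a) = h(b)$ with $a \ne b$. By sign-preservation $a$ and $b$ must have the same sign (if they had opposite signs, so would $h(a) = h(b)$, which is absurd); and since $h$ commutes with $\neg$, replacing the pair by $\neg a, \neg b$ if necessary lets me assume $a > b > 0$. For positive elements with $a > b$ the defining clauses of $\to$ give $a \to b = (-a) \wedge b = \neg a$, because $-a < 0 < b$; hence $h(a \to b) = \neg h(a)$. On the other hand $h(a \to b) = h(a) \to h(b) = h(a) \to h(a) = |h(a)| = h(a)$, the last equality because $h(a) > 0$. Thus $\neg h(a) = h(a)$, contradicting the fixpoint-freeness of $\neg$ on $\Zed_{2n}$. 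So $h$ is injective, and comparing cardinalities gives $2m = |\Zed_{2m}| \leq |\Zed_{2n}| = 2n$, i.e.\ $m \leq n$.

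For the endomorphism statement I would take $m = n$. Then $h$ is injective and sign-preserving, so it restricts to a strictly increasing map of the $n$-element chain $\{1, 2, \dots, n\}$ of positive elements into itself; such a map is necessarily the identity, so $h(x) = x$ for every $x > 0$, and then $h(x) = h(\neg \neg x) = \neg h(\neg x) = \neg \neg x = x$ for $x < 0$ as well. Hence $h = \id$. I do not expect any serious obstacle here: the one step that needs a little care is the reduction in the injectivity argument to $a > b > 0$ (combining sign-preservation with preservation of $\neg$), and the identity $a \to b = \neg a$ for positive $a > b$ is the mechanism that makes the contradiction work.
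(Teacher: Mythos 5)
Your proof is correct and rests on the same key mechanism as the paper's: applying $h$ to an implication between two collapsed elements to produce an element $r$ of $\Zed_{2n}$ with $\neg r = r$, which is impossible, and then getting $m \leq n$ from injectivity and the identity-endomorphism claim from injectivity plus order-preservation on a finite chain. The only cosmetic difference is that the paper computes both $h(k\to\ell)$ and $h(\ell\to k)$ at once, obtaining $\neg r \vee r = r\to r = \neg r \wedge r$ directly and thereby avoiding your preliminary sign-preservation lemma and the reduction to $a > b > 0$.
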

\begin{proof}
Assume there exist $k<\ell  \in A $ with $h(k)=h(\ell)=r\in \Zed_{2n}$. 
Then 
\[
\neg r\vee r=h(\neg k\vee \ell)=h(k\to \ell) =r\to r 
=h(\ell \to k)=h( \neg \ell \wedge k)= \neg r \wedge r,
\] 
that is, $\neg r= r $, which is impossible
in $\Zed_{2n}$.  
So $h$ is 
injective, and hence  an isomorphism from $\Zed_{2m}$ onto 
a subalgebra of $\Zed_{2n}$ isomorphic to $\Zed_{2m}$.  Necessarily $m\leq n$.

The final statement is immediate from what we have proved already.
\end{proof}
 
\begin{prop}[homomorphisms, odd case]  \label{prop:phomOdd} 
Let $m,k \geq 1$ and~$h$ be a homomorphism 
from 
$\Zed_{2m+1}$  into $\Zed_{k}$. Then $k=2n+1$ for some $n\geq 0$ and if $a,b\in \Zed_{2m+1}$ are distinct and 
such that $h(a)=h(b)$, then $h(a)=0=h(b)$.
Moreover,
two endomorphisms $h_1,h_2\in\End(\Zed_{2m+1})$ are equal if and only if their images coincide and 
there is a bijection between $\End(\Zed_{2m+1})$ and subalgebras of $\Zed_{2m+1}$ that contain $0$.
\end{prop}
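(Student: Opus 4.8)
The plan is to analyse homomorphisms $h\colon\Zed_{2m+1}\to\Zed_k$ by first reducing to the case where $h$ is surjective onto its image, and then exploiting the structure of modulus.  First I would dispose of the codomain: since $|a|:=a\vee\neg a$ is a term, $h$ commutes with it, and in $\Zed_{2m+1}$ the element $0$ is the unique $x$ with $|x|=x$ (equivalently $\neg x=x$).  If $k=2n$ there is no such element, so the image of $0$ would have to satisfy $\neg h(0)=h(0)$ in $\Zed_{2n}$, which is impossible; hence $k=2n+1$.  (Alternatively, quote Proposition~\ref{prop:subalg}(ii): the image of $h$ is a subalgebra of $\Zed_k$ isomorphic to a quotient of $\Zed_{2m+1}$, and one checks quotients of odd chains are odd chains.)

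For the collapsing statement, suppose $a\neq b$ with $h(a)=h(b)=r$.  I would run essentially the computation already used in the proof of Proposition~\ref{prop:phomEven}: applying $h$ to the two identities $a\to b$ and $b\to a$ (one of which, say $a<b$, gives $(-a)\vee b$ and the other $(-b)\wedge a$) and using that $h$ preserves $\to$, $\wedge$, $\vee$, we get $\neg r\vee r=r\to r=\neg r\wedge r$, so $\neg r=r$, whence $r=0$ in $\Zed_{2n+1}$.  That proves $h(a)=0=h(b)$.  In particular the nonzero elements are mapped injectively, and the kernel of $h$ identifies a ``block'' $h^{-1}(0)$ that is a union of sets $\{c,\neg c\}$ together with $0$, with everything outside that block mapped faithfully.

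For the ``Moreover'' clause, fix endomorphisms $h_1,h_2$ of $\Zed_{2m+1}$ with $\img h_1=\img h_2=:S$, a subalgebra containing $0$ (it contains $0$ by the argument above, or because $0$ is definable as the unique fixed point of $\neg$).  By Proposition~\ref{prop:subalg}(ii), $S\cong\Zed_{2\ell+1}$ for some $\ell\leq m$.  The point is that $\Zed_{2\ell+1}$ embeds into $\Zed_{2m+1}$ in exactly one way: an embedding must send $0$ to $0$ and, by preserving $\neg$ and the order, must send $\pm1$ to $\pm1$, $\pm2$ to $\pm2$, and so on (order-preservation forces the $i$-th positive element to the $i$-th positive element).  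So both $h_1$ and $h_2$, composed with the unique identification $S\cong\Zed_{2\ell+1}$, are determined once we know that each of the surjections $\Zed_{2m+1}\twoheadrightarrow\Zed_{2\ell+1}$ underlying $h_1$ and $h_2$ is \emph{itself} unique.  That surjection is determined by which pairs $\{c,\neg c\}$ get collapsed to $0$; and since a quotient map must be order-preserving and $\neg$-preserving, the only possibility is that exactly the innermost $2(m-\ell)$ elements $\{\pm1,\ldots,\pm(m-\ell)\}$ collapse to $0$, while $\pm(m-\ell+j)\mapsto\pm j$.  Hence $h_1=h_2$.  Conversely, for each subalgebra $S\ni0$, say $S\cong\Zed_{2\ell+1}$, composing this canonical surjection with the canonical embedding $S\hookrightarrow\Zed_{2m+1}$ gives an endomorphism with image exactly $S$; this is the required bijection $\End(\Zed_{2m+1})\leftrightarrow\{\,S\leq\Zed_{2m+1}\mid 0\in S\,\}$.

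The main obstacle I anticipate is the uniqueness of the surjection $\Zed_{2m+1}\twoheadrightarrow\Zed_{2\ell+1}$: it is intuitively forced by ``a $\neg$-symmetric order-quotient of a chain must collapse an initial symmetric interval around $0$,'' but making this watertight requires carefully using that $h$ preserves $\to$ (not merely the lattice order and $\neg$) to rule out non-monotone identifications — e.g. to show that if $c$ maps to $0$ then every $d$ with $|d|\leq|c|$ also maps to $0$.  Everything else is bookkeeping with the first two parts of the proposition and Proposition~\ref{prop:subalg}.
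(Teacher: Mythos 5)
Your proposal is correct and follows the same skeleton as the paper's (very terse) proof, with a couple of differences in execution. For the collapsing statement you redo the $r\to r$ computation directly inside $\Zed_{2n+1}$, concluding $\neg r=r$, hence $r=0$; the paper instead observes that $\Zed_{2m+1}\setminus h^{-1}(0)$ is a subalgebra omitting $0$, hence isomorphic to some $\Zed_{2\ell}$, and quotes Proposition~\ref{prop:phomEven} — the same computation, packaged as a reduction to the even case. For the ``Moreover'' clause the paper says only that the claims ``follow'' from this reduction; you actually spell out the needed structure (the $0$-fibre is a symmetric block, the map is an order-isomorphism off that block, and the identification of the image with $\Zed_{2\ell+1}$ is unique), plus the converse construction collapse-then-include, so your account is if anything more complete than the paper's. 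Three small repairs: (i) your parenthetical ``$0$ is the unique $x$ with $|x|=x$'' is false ($|x|=x$ for all $x\geq 0$); what you actually use, $\neg h(0)=h(\neg 0)=h(0)$ and the absence of $\neg$-fixed points in $\Zed_{2n}$, is fine. (ii) ``$\Zed_{2\ell+1}$ embeds into $\Zed_{2m+1}$ in exactly one way'' is false as stated (e.g.\ $1\mapsto 2$, $-1\mapsto -2$, $0\mapsto 0$ embeds $\Zed_3$ in $\Zed_5$); what you need, and what is true, is that a \emph{fixed} subalgebra $S$ admits a unique isomorphism onto $\Zed_{2\ell+1}$, since any isomorphism of these finite chains is the order-isomorphism. (iii) The obstacle you flag at the end is easier than you fear: no appeal to $\to$ is needed, since $h(0)=0$ and monotonicity of lattice homomorphisms give $0=h(0)\leq h(d)\leq h(|c|)=|h(c)|=0$ whenever $|d|\leq |c|$ and $h(c)=0$, so $h^{-1}(0)$ is a symmetric interval about $0$; combined with the cardinality count $|h^{-1}(0)|=2m+1-2\ell$ this pins down the kernel, and the rest of your argument closes.
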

\begin{proof}  
Since $0\in\Zed_{2m+1}$ is the unique element $x$ such that $\neg\neg x= x$ 
necessarily $h(0)=0$, whence $k=2n+1$ for some $n\geq 0$. 

Now consider  
$\Zed_{2m+1}\setminus h^{-1}(0)$. 
This is a subalgebra $\A$ of $\Zed_{2m+1}$ with $0\notin \A$. 
By Proposition~\ref{prop:subalg},  $\A\cong \Zed_{2\ell}$ for some $\ell >0$. 
The remaining claims follow from this together with
 Proposition~\ref{prop:phomEven}. 
\end{proof}


We  now have enough information to describe 
the lattices of congruences of $\Zed_{2n}$ and $\Zed_{2n+1}$.  
For $m=0,\ldots ,n-1$, let $\eq_m\,\subseteq\Zed_{2n+1}^2$ 
be the equivalence relation
 defined by 
$a \eq_m b$ if and only if $a=b$ or $a,b\in \Zed_{2m+1}$;
here $\eq_0$ is just the diagonal relation.
Each $\eq_m$ is algebraic, 
that is, a subalgebra of $\Zed_{2n+1}^2$. Indeed, each $\eq_m$ is the kernel of a 
homomorphism from $\Zed_{2n+1}$ into $\Zed_{2m+1}$.  
To avoid overloading the notation,  $\eq_m$  will denote both the relation 
 on  $\Zed_{2n+1}$ defined above and also its 
 restriction to  $\Zed_{2n}$.

\begin{prop}[congruences]\label{Prop:Congruences}
For each $n\geq 1$ and ${k\in\{2n,2n+1\}}$ the lattice of congruences of $\Zed_{k}$ is the chain 
\[ 
\eq_0\subseteq\, \eq_1\,\subseteq \cdots\subseteq\, \eq_{n-1}\, \subseteq \eq_{n}=\Zed_{k}  ^2.
\]
\end{prop}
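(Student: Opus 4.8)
The plan is to show that every congruence of $\Zed_k$ coincides with one of the relations $\eq_m$. The inclusions $\eq_0\subseteq\eq_1\subseteq\cdots\subseteq\eq_{n-1}\subseteq\eq_n=\Zed_k^2$ are immediate once this is done: they follow from the chain of subalgebras $\Zed_1\subseteq\Zed_3\subseteq\cdots\subseteq\Zed_{2n+1}$, and the top relation is the full one because $\eq_n$ collapses all of $\Zed_{2n+1}$ (respectively $\Zed_{2n}$) to a single block. It is already recorded above that each $\eq_m$ is a congruence, so the whole content is the \emph{converse}: fix an arbitrary congruence $\theta$ of $\Zed_k$ and identify it.

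First I would exploit that the lattice reduct of $\Zed_k$ is a chain. A lattice congruence on a chain has convex blocks (if $a\,\theta\,b$ and $a\leq c\leq b$ then $c=c\vee a\,\theta\,c\vee b=b$), so every $\theta$-block is an interval of $\Zed_k$. Next, since $\neg$ is an order-reversing automorphism of $\Zed_k$, the partition induced by $\theta$ is $\neg$-invariant; hence $\neg C$ is a $\theta$-block whenever $C$ is.

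The crux is to pin down the non-singleton blocks, and this is the one place the implication enters. Suppose $C$ is a $\theta$-block containing distinct elements $a<b$. Compatibility of $\to$ applied to the pairs $(a,b)$ and $(b,a)$ gives $(a\to b,\,b\to a)\in\theta$. From the definition of $\to$ one computes $a\to b=(-a)\vee b$ and $b\to a=(-b)\wedge a$, so that $a\to b=\neg(b\to a)$; writing $d:=b\to a$, a short case check gives $d<0$ and $d\leq a<b\leq\neg d$. Thus $(d,\neg d)\in\theta$ with $d<0<\neg d$, so the block $C_0$ of $d$ contains both $d$ and $\neg d$, and being convex it contains the symmetric interval $[d,\neg d]\cap\Zed_k$, which in turn contains $C$. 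Since blocks that meet are equal, $C_0=C$, so $C=[d,\neg d]\cap\Zed_k$ is a symmetric interval; by Proposition~\ref{prop:subalg} it is $\Zed_{2m+1}$ in the odd case, or $\Zed_{2m}$ in the even case, for some $m$ with $1\leq m\leq n$. As any two such blocks would share the element $0$ (odd case) or the elements $\pm1$ (even case), $\theta$ has at most one non-singleton block, whence $\theta=\eq_m$ for that $m$, or $\theta=\eq_0$ (the diagonal) when there is no non-singleton block. This exhausts all congruences and establishes the displayed chain, the even and odd cases being handled uniformly.

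The main obstacle is exactly the computation in the third paragraph: it is what rules out ``lopsided'' identifications, such as collapsing $\{m+1,\ldots,n\}$ to a point while keeping $\{-(m+1),\ldots,-n\}$ and $0$ separate. Everything else --- convexity of blocks, $\neg$-invariance, and uniqueness of the large block --- is routine bookkeeping once that computation is in hand.
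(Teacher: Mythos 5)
Your proposal is sound in substance and its engine is the same as the paper's, but the organisation differs. The paper splits by parity: for $k$ odd it derives the result from the classification of homomorphisms (Proposition~\ref{prop:phomOdd}), and only for $k=2n$ does it argue directly with an arbitrary congruence $\theta$, setting $r=\max\{\,m\mid m\,\theta\,\neg m\,\}$ and using the computation $x=y\to x\;\theta\;x\to y=\neg x$ for $x>y\geq\neg x$. Your treatment is uniform in the parity: convexity of blocks (the lattice reduct is a chain), $\neg$-invariance of the block partition, and the observation that for $a<b$ in a block the elements $d=b\to a$ and $\neg d=a\to b$ are collapsed, with $d\leq a<b\leq\neg d$ and $d<0$. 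That computation is exactly the paper's even-case step, so the two proofs are close in spirit; what your version buys is that the odd case needs no separate appeal to the homomorphism analysis, at the cost of the block-bookkeeping.

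One step is overstated. For an arbitrary pair $a<b$ in $C$ the interval $[d,\neg d]\cap \Zed_k$ need not contain $C$: take $\theta=\,\eq_2$ on $\Zed_7$, so $C=\{-2,-1,0,1,2\}$, and choose $a=-1$, $b=1$; then $d=-1$, $\neg d=1$ and $[d,\neg d]\cap\Zed_7=\{-1,0,1\}\not\supseteq C$. Consequently ``$C=[d,\neg d]\cap\Zed_k$'' does not follow as written. The conclusion you actually need --- every non-singleton block is a symmetric interval --- is nevertheless within reach of what you proved: either run the argument with $a=\min C$ and $b=\max C$, in which case $C\subseteq[a,b]\subseteq[d,\neg d]\subseteq C$ forces $d=a$ and $\neg d=b$, so $\max C=-\min C$; or note that your argument already yields $-a,-b\in C$ for every pair $a<b$ in $C$, so $C$ is closed under $\neg$ and, being convex, is a symmetric interval. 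With that one-line repair the uniqueness of the non-singleton block and the identification $\theta=\,\eq_m$ (or $\theta=\,\eq_0$) go through as you describe.
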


\begin{proof}  
For $k$  odd, the result follows straightforwardly from Proposition~\ref{prop:phomOdd}.

Now assume that $k=2n$ for some $n> 1$ (the case $n=1$ is trivial).
Let $\theta$ be a non-trivial congruence of $\Zed_{2n}$. 
Let $r=\max\{\,m\mid m\,\theta\,\neg m\,\}$.  
By construction 
$\Zed_{2r}^2\subseteq \theta$. 
If $r=n$, then $\theta=\Zed_{2n}^2$. If $r<n$, it follows that $\eq_r\subseteq \theta$. To complete the proof we derive the reverse inclusion.   Let $x\,\theta\, y$ be such that $x\neq y$.
Without loss of generality assume that $x>y\geq \neg x$.
Then
\[
x= \neg y \vee x = y \to x\ \theta\  x\to y = \neg x \wedge y = \neg x.
\]
Hence $ x\leq r$ and $x\eq_r \neg x$. Therefore $y= y\wedge x \eq_r y\wedge \neg x = \neg x$, and so $x\eq_r y$.
\end{proof}

Proposition~\ref{Prop:Congruences} leads to  the next 
result.  It complements \cite[Lemma~1.1]{BD86}
which asserts, \textit{inter alia},
that every finite subdirectly irreducible subalgebra of~$\SA$ is of the form 
$\Zed_k$ for some $k \geq 1$.  

\begin{prop}\label{IrrIdx}
For each $k\geq1$ every  subalgebra of $\Zed_k$  
is subdirectly irreducible.
\end{prop}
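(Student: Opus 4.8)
The plan is to reduce everything to the structural facts already established. By Proposition~\ref{prop:subalg}, every subalgebra of $\Zed_k$ is itself isomorphic to some $\Zed_\ell$ with $\ell \leq k$ (in the odd case $\ell$ may be even or odd; in the even case $\ell$ is even), so it suffices to prove that $\Zed_k$ itself is subdirectly irreducible for every $k \geq 1$. Recall that an algebra is subdirectly irreducible precisely when its congruence lattice has a least nontrivial element (a monolith), i.e.\ when the trivial congruence is completely meet-irreducible in $\Con$.

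First I would dispose of the small cases $k \in \{1,2\}$ directly: $\Zed_1$ is trivial (and by convention the degenerate case is excluded or handled separately), and $\Zed_2$ is a two-element algebra, which is automatically subdirectly irreducible. For $k \geq 3$, write $k = 2n$ or $k = 2n+1$ with $n \geq 1$. The key step is to invoke Proposition~\ref{Prop:Congruences}, which tells us that $\Con(\Zed_k)$ is the finite chain
\[
\eq_0 \subseteq \eq_1 \subseteq \cdots \subseteq \eq_{n-1} \subseteq \eq_n = \Zed_k^2 .
\]
A chain with at least two elements always has a unique atom, namely the second-smallest element; here that atom is $\eq_1$ (note $n \geq 1$ guarantees $\eq_1$ exists and, when $k \geq 3$, that $\eq_1 \neq \eq_0$, so the chain genuinely has length $\geq 2$). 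Hence $\eq_1$ is the monolith of $\Zed_k$, and $\Zed_k$ is subdirectly irreducible.

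Finally I would assemble the two halves: an arbitrary subalgebra $\A$ of $\Zed_k$ is isomorphic to some $\Zed_\ell$ by Proposition~\ref{prop:subalg}, and each such $\Zed_\ell$ is subdirectly irreducible by the congruence-chain argument above (with the convention that the trivial $\Zed_1$ and the two-element $\Zed_2$ are treated as noted). Since subdirect irreducibility is preserved by isomorphism, $\A$ is subdirectly irreducible, which is the claim. There is no real obstacle here: the entire content has already been front-loaded into Proposition~\ref{prop:subalg} and Proposition~\ref{Prop:Congruences}. The only point requiring a moment's care is the bookkeeping around the degenerate small $k$, where the congruence lattice is a one- or two-element chain and the general "atom of a chain" reasoning must be applied (or bypassed) with the appropriate convention about what counts as subdirectly irreducible.
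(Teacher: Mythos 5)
Your proof is correct and is essentially the paper's own argument: the paper offers no separate proof, presenting the proposition as an immediate consequence of Proposition~\ref{Prop:Congruences} (congruence lattices of the $\Zed_k$ are chains, so the atom $\eq_1$ is a monolith) combined with Proposition~\ref{prop:subalg} (every subalgebra is isomorphic to some $\Zed_\ell$), which is exactly your reduction. Your side remark about the degenerate cases is sensible: the one-element subalgebra $\{0\}$ in the odd case is conventionally not subdirectly irreducible, and the paper implicitly sets this aside since its application, Theorem~\ref{Thm:Piggyback}, only requires the non-trivial subalgebras to be subdirectly irreducible.
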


We  now describe 
the 
lattice structure of the family of subquasivarieties $\{\SA_k\}$.  
From our results so far we may anticipate some connections
between the odd and even cases. 
Observe that 
$\Zed_{2n-1}\notin \ISP(\Zed_{2n}) =\SA_{2n}$.  
 It follows that the quasivarieties $\SA_k$ are ordered as indicated in 
 Figure~\ref{fig:QuasiVar}.     
We remark as an aside that $\SA_{2n+1}=\ISP(\Zed_{2n+1})$ coincides with $\HSP(\Zed_{2n+1})$ and 
$\HSP(\Zed_{2n})$ coincides with $\ISP(\Zed_{2n},\Zed_{2n-1})$.  
 We do not need these facts in the present paper.

\begin{figure}
\begin{center}
	\begin{tikzpicture}
		\node [label=left:{$\SA_7$}] (S7) at (0,4) {};
		\node [label=left:{$\SA_5$}] (S5) at (0,3) {};
		\node [label=left:{$\SA_3$}] (S3) at (0,2) {};
		\node [label=left:{$\SA_1$}] (S1) at (0,1) {};

   		\draw [shorten <=-2pt, shorten >=-2pt] (S7) -- (S5);
		\draw [shorten <=-2pt, shorten >=-2pt] (S5) -- (S3);
	
		\draw (S1) circle [radius=2pt];
		\draw (S3) circle [radius=2pt];
		\draw (S5) circle [radius=2pt];
		\draw (S7) circle [radius=2pt];
		\node [label=right:{$\SA_6$}] (S6) at (1,3.5) {};
		\node [label=right:{$\SA_4$}] (S4) at (1,2.5) {};
		\node [label=right:{$\SA_2$}] (S2) at (1,1.5) {};

   		\draw [shorten <=-2pt, shorten >=-2pt] (S2) -- (S4);
		\draw [shorten <=-2pt, shorten >=-2pt] (S6) -- (S4);

		\draw (S2) circle [radius=2pt];
		\draw (S4) circle [radius=2pt];
		\draw (S6) circle [radius=2pt];
		
		\draw [shorten <=-2pt, shorten >=-2pt] (S2) -- (S3);
		\draw [shorten <=-2pt, shorten >=-2pt] (S1) -- (S2);
		\draw [shorten <=-2pt, shorten >=-2pt] (S4) -- (S5);
		\draw [shorten <=-2pt, shorten >=-2pt] (S6) -- (S7);
		\node [label=left:{$\SA$}] (S) at (0,5) {};
		\node  (S-) at (1,4.5) {};
		\draw (S) circle [radius=2pt];
   		\draw [shorten <=-2pt, shorten >=-2pt,dotted] (S7) -- (S);
  		\draw [shorten <=-2pt, shorten >=-2pt,dotted] (S-) -- (S);
 		\draw [shorten <=-2pt, shorten >=-2pt,dotted] (S-) -- (S6);
		
	\end{tikzpicture}
\end{center}
\caption{Quasivarieties $\SA_k$\label{fig:QuasiVar}}
\end{figure}


We shall now study partial endomorphisms.  
When we refer to a map~$e$ as a \defn{partial endomorphism} of $
Z\Zed_k$ we mean that $h \colon \dom h \to \img h$, where
$\dom h$ and $\img h$ are subalgebras of $
\Zed_k$ and thereby non-empty; total maps (the endomorphisms) are included. 
We shall denote by $\PEZ{k}$ the set of partial endomorphisms of
$\Zed_k$.
The composition of two  elements of $\PEZ{k}$ may have empty domain.
By a slight abuse of notation we shall when expedient  refer to
$\PEZ{k}$
as a monoid,  under the operation $\circ$ of composition.  In this
situation we tacitly
add in the empty map.  This allows us also to talk about the submonoid
of $\PEZ{k}$ generated by a given
subset.

We first record  an easy corollary of Proposition~\ref{prop:subalg}.

\begin{coro} \label{cor:k-trans}
 Let $1 \leq m \leq  n$.  Let
$0 < b_1 < b_2 < \cdots < b_m \leq n$ and 
$0 < c_1 < c_2 < \cdots < c_m\leq n$.
\begin{enumerate}
\item[{\rm (i)}] 
There exists an invertible partial endomorphism $e$ of~$\Zed_{2n}$
 such that $e(b_i) =  c_i$ for 
$1 \leq i\leq m$.  

\item[{\rm (ii)}]  There exists an invertible partial endomorphism $e$ of
$\Zed_{2n+1}$ such that $e(b_i) =  c_i$ for 
$1 \leq i\leq m$ and $e(0) = 0$.
\end{enumerate}
 \end{coro}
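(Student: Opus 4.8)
The plan is to exhibit the required invertible partial endomorphism $e$ directly, the only real input being the description of subalgebras in Proposition~\ref{prop:subalg} (and the remark preceding it) together with the fact that $\to$ is definable from $\leq$, $\neg$, $\wedge$, $\vee$. For part~(i) I would put $A := \bigcup_{i=1}^{m}\{b_i,-b_i\}$ and $C := \bigcup_{i=1}^{m}\{c_i,-c_i\}$. By the observation recorded just before Proposition~\ref{prop:subalg} (a union of sets $\{a,\neg a\}$ is a subalgebra), $A$ and $C$ are subalgebras of $\Zed_{2n}$, each isomorphic to $\Zed_{2m}$ by Proposition~\ref{prop:subalg}(i). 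Define $e\colon A\to C$ by $e(\pm b_i)=\pm c_i$ for $1\leq i\leq m$. Then $e$ is a bijection, it commutes with $\neg$ by construction, and it is an order-isomorphism, since in increasing order the two chains read $-b_m<\cdots<-b_1<b_1<\cdots<b_m$ and $-c_m<\cdots<-c_1<c_1<\cdots<c_m$ and $e$ matches them up position by position. An order- and $\neg$-preserving bijection between subalgebras of $\Zed$ automatically preserves $\wedge$, $\vee$ and hence $\to$ (read off the definition of $\to$), so $e$ is an isomorphism between subalgebras of $\Zed_{2n}$, that is, an invertible partial endomorphism, with $e(b_i)=c_i$ as wanted.

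Part~(ii) is the same argument after adjoining $0$: I would take $A := \{0\}\cup\bigcup_{i=1}^{m}\{b_i,-b_i\}$ and $C := \{0\}\cup\bigcup_{i=1}^{m}\{c_i,-c_i\}$, which by Proposition~\ref{prop:subalg}(ii) are subalgebras of $\Zed_{2n+1}$ isomorphic to $\Zed_{2m+1}$, and extend $e$ by $e(0)=0$. Since $0$ occupies the middle position of both chains, $e$ is still an order- and $\neg$-preserving bijection, hence an isomorphism, and it satisfies $e(b_i)=c_i$ and $e(0)=0$.

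I do not anticipate any genuine obstacle: the substantive content is carried by Proposition~\ref{prop:subalg}, and the one point worth spelling out carefully is that matching up two equinumerous subalgebras of this form in the unique order-compatible fashion is automatically a homomorphism, precisely because $\to$ depends only on $\leq$ and $\neg$. (As a sanity check, when $m=n$ one is forced to have $b_i=c_i=i$ and $e$ is the identity, consistent with Proposition~\ref{prop:phomEven}.)
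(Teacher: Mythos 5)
Your proposal is correct and takes essentially the same route as the paper: both arguments rest on Proposition~\ref{prop:subalg} (the subalgebras determined by the $b_i$ and by the $c_i$ are copies of $\Zed_{2m}$, respectively of $\Zed_{2m+1}$ once $0$ is adjoined) together with the fact that such an isomorphism must match the chains up in order. The only cosmetic difference is that the paper takes the abstract isomorphism supplied by Proposition~\ref{prop:subalg} and notes that order-preservation forces $e(b_i)=c_i$, whereas you write the map down explicitly and check directly that an order- and $\neg$-preserving bijection preserves $\wedge$, $\vee$ and hence $\to$.
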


\begin{proof}  
By Proposition~\ref{prop:subalg}(i), the subalgebras 
$\B$ and $\C$
 of $\Zed_{2n}$ generated  by $\{b_1,b_2 , \ldots , b_m\}$ and $\{c_1,c_2 , \ldots , c_m\}$, respectively,  have $2m$ elements and each is  isomorphic to $\Zed_{2m}$. Hence they are isomorphic. 
Since morphisms are order preserving any isomorphism $e\colon \B\to \C$
 is such that $0<e(b_1)<\cdots<e(b_m)$. Therefore, $e(b_i)=c_i$ for 
 $1\leq i\leq m$.   Hence (i) holds.

The partial endomorphism~$e$ of~$\Zed_{2n}$ in (i)  also belongs to $\PEZ{2n+1}$.
No conflict arises if we extend this map by defining $e(0) = 0$.  Hence (ii) holds.
\end{proof}

We now analyse partial endomorphisms more closely, 
with the objective of  identifying amenable generating sets for the monoids
$\PEZ {2n}$ and $\PEZ{2n+1}$.
For each 
$1< i\leq n$, we define  $f_i\colon \Zed_{2n}\setminus\{i,-i\} \to \Zed_{2n}$ 
(see Figure~\ref{fig:pe-Z6}) by 
\[
f_i(a) = 
\begin{cases}
i& \mbox{if } a = i-1,\\
-i& \mbox{if } a = -(i-1),\\
a & \mbox{otherwise}
\end{cases}
\]
and 
$g\colon \Zed_{2n}\setminus\{1,-1\} \to \Zed_{2n} $ by 
\[
g(a) = \begin{cases}
a-1 & \mbox{if } a>0, \\
a+1& \mbox{otherwise.}
\end{cases}
\]
 We have excluded  the case $n=1$ here;   the only 
element of
$\PEZ {2}$ is the identity endomorphism. 
 Henceforth we adopt a commonsense 
convention in relation to degeneracies of this sort, not explicitly excluding 
vacuous scenarios for example.   

Let $F_n$ denote  the submonoid of 
$\PEZ {2n}$
which is generated by $f_2,\ldots, f_{n},g$.

\begin{figure}[ht]
\begin{center}

	\begin{tikzpicture}[scale=.8]

\begin{scope}[xshift=-4cm]
\begin{scope}[xshift=-.6cm]
\node [label=left:{\small  $3$\  -}]   (a2) at  (-8,3) {};
\node [label=left:{\small  $2$\ -}]   (a1) at  (-8,2) {};
\node [label=left:{\small  $1$\   -}]   (a0) at  (-8,1) {};
\node [label=left:{\small  $-1$\  -}]   (a-1) at  (-8,0) {};
\node [label=left:{\small  $-2$\  -}]   (a-2) at  (-8,-1) {};
\node [label=left:{\small  $-3$\ -}]   (a-3) at  (-8,-2) {};

\draw[thin] (-8.35,3.2)--(-8.35,-2.2);
\end{scope}
\end{scope}

\begin{scope}[xshift=-2cm]

		\node 
(2lf3) at (-7,3) {};
		\node 
(2lf2) at (-7,2) {};
		\node 
(2lf1) at (-7,1) {};
		\node 
(2lf-1) at (-7,0) {};
		\node 
(2lf-2) at (-7,-1) {};
		\node 
 (2lf-3) at (-7,-2) {};
\node  (lab-f2) at (-6.7,-2.5)  {$f_2$};

   		\draw [shorten <=-2pt, shorten >=-2pt] (2lf1) -- (2lf2);
		\draw [shorten <=-2pt, shorten >=-2pt] (2lf3) -- (2lf2);
		\draw [shorten <=-2pt, shorten >=-2pt] (2lf-1) -- (2lf1);
		\draw [shorten <=-2pt, shorten >=-2pt] (2lf-2) -- (2lf-1);
		\draw [shorten <=-2pt, shorten >=-2pt] (2lf-3) -- (2lf-2);

		\filldraw (2lf1) circle [radius=2pt];
		\draw (2lf2) circle [radius=2pt];
		\filldraw (2lf3) circle [radius=2pt];
		\filldraw (2lf-1) circle [radius=2pt];
		\draw (2lf-2) circle [radius=2pt];
		\filldraw (2lf-3) circle [radius=2pt];
		\node 
(2rf3) at (-6,3) {};
		\node 
 (2rf2) at (-6,2) {};
		\node 
 (2rf1) at (-6,1) {};
		\node 
(2rf-1) at (-6,0) {};
		\node 
(2rf-2) at (-6,-1) {};
		\node 
(2rf-3) at (-6,-2) {};

   		\draw [shorten <=-2pt, shorten >=-2pt] (2rf1) -- (2rf2);
		\draw [shorten <=-2pt, shorten >=-2pt] (2rf3) -- (2rf2);
		\draw [shorten <=-2pt, shorten >=-2pt] (2rf-1) -- (2rf1);
		\draw [shorten <=-2pt, shorten >=-2pt] (2rf-2) -- (2rf-1);
		\draw [shorten <=-2pt, shorten >=-2pt] (2rf-3) -- (2rf-2);

		\draw (2rf1) circle [radius=2pt];
		\filldraw (2rf2) circle [radius=2pt];
		\filldraw (2rf3) circle [radius=2pt];
		\draw (2rf-1) circle [radius=2pt];
		\filldraw (2rf-2) circle [radius=2pt];
		\filldraw (2rf-3) circle [radius=2pt];
		\draw[->] (2lf1) -- (2rf2);
		\draw[->] (2lf-1) -- (2rf-2);
		\draw[->] (2lf3) -- (2rf3);
		\draw[->] (2lf-3) -- (2rf-3);
\end{scope}


\begin{scope}[xshift=-1cm]
		\node 
(lf3) at (-4,3) {};
		\node 
(lf2) at (-4,2) {};
		\node 
(lf1) at (-4,1) {};
		\node 
(lf-1) at (-4,0) {};
		\node 
(lf-2) at (-4,-1) {};
		\node 
 (lf-3) at (-4,-2) {};

   		\draw [shorten <=-2pt, shorten >=-2pt] (lf2) -- (lf3);
		\draw [shorten <=-2pt, shorten >=-2pt] (lf1) -- (lf2);
		\draw [shorten <=-2pt, shorten >=-2pt] (lf-1) -- (lf1);
		\draw [shorten <=-2pt, shorten >=-2pt] (lf-2) -- (lf-1);
		\draw [shorten <=-2pt, shorten >=-2pt] (lf-3) -- (lf-2);

		\filldraw (lf1) circle [radius=2pt];
		\filldraw (lf2) circle [radius=2pt];
		\draw (lf3) circle [radius=2pt];
		\filldraw (lf-1) circle [radius=2pt];
		\filldraw (lf-2) circle [radius=2pt];
		\draw (lf-3) circle [radius=2pt];
		\node 
(rf3) at (-3,3) {};
		\node 
 (rf2) at (-3,2) {};
		\node 
 (rf1) at (-3,1) {};
		\node 
(rf-1) at (-3,0) {};
		\node 
(rf-2) at (-3,-1) {};
		\node 
(rf-3) at (-3,-2) {};
\node (lab-f3) at (-3.7, -2.5) {$f_3$};

   		\draw [shorten <=-2pt, shorten >=-2pt] (rf1) -- (rf2);
		\draw [shorten <=-2pt, shorten >=-2pt] (rf3) -- (rf2);
		\draw [shorten <=-2pt, shorten >=-2pt] (rf-1) -- (rf1);
		\draw [shorten <=-2pt, shorten >=-2pt] (rf-2) -- (rf-1);
		\draw [shorten <=-2pt, shorten >=-2pt] (rf-3) -- (rf-2);

		\filldraw (rf1) circle [radius=2pt];
		\draw (rf2) circle [radius=2pt];
		\filldraw (rf3) circle [radius=2pt];
		\filldraw (rf-1) circle [radius=2pt];
		\draw (rf-2) circle [radius=2pt];
		\filldraw (rf-3) circle [radius=2pt];
		\draw[->] (lf1) -- (rf1);
		\draw[->] (lf-1) -- (rf-1);
		\draw[->] (lf2) -- (rf3);
		\draw[->] (lf-2) -- (rf-3);
	\end{scope}

\begin{scope}[xshift=-0.5cm]
		\node 
(lg3) at (-1,3) {};
		\node 
(lg2) at (-1,2) {};
		\node 
(lg1) at (-1,1) {};
		\node 
(lg-1) at (-1,0) {};
		\node 
(lg-2) at (-1,-1) {};
		\node 
 (lg-3) at (-1,-2) {};
\node (lab-g)  at  (-.7,-2.5) {$g$};

   		\draw [shorten <=-2pt, shorten >=-2pt] (lg1) -- (lg2);
		\draw [shorten <=-2pt, shorten >=-2pt] (lg3) -- (lg2);
		\draw [shorten <=-2pt, shorten >=-2pt] (lg-1) -- (lg1);
		\draw [shorten <=-2pt, shorten >=-2pt] (lg-2) -- (lg-1);
		\draw [shorten <=-2pt, shorten >=-2pt] (lg-3) -- (lg-2);

		\draw (lg1) circle [radius=2pt];
		\filldraw (lg2) circle [radius=2pt];
		\filldraw (lg3) circle [radius=2pt];
		\draw (lg-1) circle [radius=2pt];
		\filldraw (lg-2) circle [radius=2pt];
		\filldraw (lg-3) circle [radius=2pt];
		\node 
(rg3) at (0,3) {};
		\node 
 (rg2) at (0,2) {};
		\node 
 (rg1) at (0,1) {};
		\node 
(rg-1) at (0,0) {};
		\node 
(rg-2) at (0,-1) {};
		\node 
(rg-3) at (0,-2) {};

   		\draw [shorten <=-2pt, shorten >=-2pt] (rg1) -- (rg2);
		\draw [shorten <=-2pt, shorten >=-2pt] (rg3) -- (rg2);
		\draw [shorten <=-2pt, shorten >=-2pt] (rg-1) -- (rg1);
		\draw [shorten <=-2pt, shorten >=-2pt] (rg-2) -- (rg-1);
		\draw [shorten <=-2pt, shorten >=-2pt] (rg-3) -- (rg-2);

		\filldraw (rg1) circle [radius=2pt];
		\filldraw (rg2) circle [radius=2pt];
		\draw (rg3) circle [radius=2pt];
		\filldraw (rg-1) circle [radius=2pt];
		\filldraw (rg-2) circle [radius=2pt];
		\draw (rg-3) circle [radius=2pt];
		\draw[->] (lg2) -- (rg1);
		\draw[->] (lg-3) -- (rg-2);
		\draw[->] (lg3) -- (rg2);
		\draw[->] (lg-2) -- (rg-1);
\end{scope} 
		
	\end{tikzpicture}
\end{center}
\caption{A generating set for $\PEZ{6}$ \label{fig:pe-Z6}}
\end{figure}


\begin{prop}
\label{lem:peclaim}  Let $n \geq 1$.  Then for each  $m\leq n$ and for any
 $2m$-element subalgebra 
 $\A$ of $\Zed_{2n}$. 
there  exists
$\varphi _{\A} \in F_n$ such that  $\varphi _{\A}{\restriction}_{\A}$ is an isomorphism
from~$\A$ onto~$\Zed_{2m}$.  
Moreover, $\varphi _{\A}^{-1} $ also belongs to~$F_n$.
\end{prop}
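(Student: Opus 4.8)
The plan is to prove the statement by induction on $n$, tracking at each stage which subalgebra a partial endomorphism sends $\A$ onto. Fix $n\geq 1$ and a $2m$-element subalgebra $\A$ of $\Zed_{2n}$. By Proposition~\ref{prop:subalg}(i), $\A$ corresponds to an $m$-element subset $S=\{a_1<a_2<\cdots<a_m\}$ of $\{1,2,\ldots,n\}$, so that $\A=\{\pm a_1,\ldots,\pm a_m\}$, and $\Zed_{2m}$ corresponds to $\{1,2,\ldots,m\}$. The goal is to build $\varphi_\A\in F_n$ whose restriction to $\A$ is the unique isomorphism $\A\to\Zed_{2m}$ (uniqueness of this isomorphism is Corollary~\ref{cor:k-trans}(i) applied to $\A$ and $\Zed_{2m}$, or can be seen directly since morphisms preserve order and negation).

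First I would record the two elementary actions of the generators on subalgebras. Composing with $g$ collapses the gap at the bottom: $g$ is defined on all of $\Zed_{2n}\setminus\{1,-1\}$ and sends $a\mapsto a-1$ for $a>0$; applied to a subalgebra whose positive part is $\{a_1<\cdots<a_m\}$ with $a_1\geq 2$, it yields the subalgebra with positive part $\{a_1-1<\cdots<a_m-1\}$, and it is injective there, hence invertible on that subalgebra. Composing with $f_i$ (defined on $\Zed_{2n}\setminus\{i,-i\}$, sending $i-1\mapsto i$) closes a gap of size one immediately below level $i$: if a subalgebra has positive part containing $i-1$ but not $i$, then $f_i$ pushes $i-1$ up to $i$ and fixes everything else, again invertibly on that subalgebra. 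The key observation is that by alternately applying suitable $f_i$'s (to close gaps above) and $g$'s (to shift the whole positive part down by one), any $m$-element positive part can be transformed into $\{1,2,\ldots,m\}$: e.g. first use $f_i$'s to push $\{a_1<\cdots<a_m\}$ up to $\{n-m+1,\ldots,n\}$, then apply $g$ exactly $n-m$ times to bring it down to $\{1,\ldots,m\}$; at each intermediate step the relevant generator is invertible on the current image, so the composite $\varphi_\A$ restricted to $\A$ is an isomorphism onto $\Zed_{2m}$, and reversing the sequence and inverting each factor (each $f_i$ and $g$ has a partial inverse lying in $F_n$, which must itself be checked to be expressible via the generators, or one argues the composite restricted map is invertible and its inverse is realized by the reversed word — this is the point needing care) gives $\varphi_\A^{-1}\in F_n$.

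The main obstacle is the claim that $\varphi_\A^{-1}\in F_n$: the generators $f_i$ and $g$ are not invertible \emph{as elements of $\PEZ{2n}$} (they are not injective on their full domains in the relevant sense, or rather their set-theoretic inverses are not among the listed generators), so one cannot simply invert the word letter by letter. I would handle this by strengthening the induction hypothesis: prove simultaneously that for every pair of $2m$-element subalgebras $\A,\B$ of $\Zed_{2n}$ there is an element of $F_n$ whose restriction to $\A$ is the (unique) isomorphism $\A\to\B$. Then $\varphi_\A^{-1}$ is obtained by applying this to the pair $(\Zed_{2m},\A)$ rather than by formally inverting. To get the pair-version, I would show the generators suffice to realize the two ``elementary moves'' on the lattice of $m$-subsets of $\{1,\ldots,n\}$ — raising one element by one step into a vacant slot, and the global downshift $g$ together with its effect — and that these moves connect any two $m$-subsets; composing the move from $S$ to $\{1,\ldots,m\}$ with the move from $\{1,\ldots,m\}$ to $T$ handles the general pair, and in particular the case $T=S$, $\B=\A$ with the trivial composite, while the case $\B=\Zed_{2m}$ gives $\varphi_\A$ and $\B\to\Zed_{2m}$... rather, the case $(\Zed_{2m},\A)$ gives $\varphi_\A^{-1}$. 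One must double-check the boundary behaviour (that $f_i$ is genuinely applicable when the slot $i$ is vacant and $i-1$ is occupied, and that $g$ may be applied only when $1$ lies in the positive part being moved, shifting down) and that, since each generator used is injective on the current image subalgebra, the overall restriction to $\A$ is an isomorphism onto the target — which is automatic once the underlying set map is the correct order- and negation-preserving bijection.
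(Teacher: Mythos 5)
Your construction of $\varphi_\A$ is sound and takes a genuinely different route from the paper: you build one explicit word (raise the positive part of $\A$ to the top block $\{n-m+1,\ldots,n\}$ with the $f_i$'s, then apply $g^{n-m}$), whereas the paper argues by induction on $n$, embedding $F_{n-1}$ into $F_n$ via the observation that the partial identity on $\Zed_{2n-2}$ equals $g\newcirc f_2\newcirc\cdots\newcirc f_n\in F_n$, and reducing the case $n\in\A$ to the case $n\notin\A$ by the gap-closing map $g\newcirc f_2\newcirc\cdots\newcirc f_k$, whose inverse is again an explicit word in the $f_i$'s. A useful by-product of your word, which you should state explicitly, is that $\dom\varphi_\A=\A$ and $\img\varphi_\A=\Zed_{2m}$ exactly: the last factor $g^{n-m}$ has image $\Zed_{2m}$, every partial endomorphism of $\Zed_{2n}$ is injective (Proposition~\ref{prop:phomEven}), so $|\dom\varphi_\A|=|\img\varphi_\A|\leq 2m$, and $\A\subseteq\dom\varphi_\A$ forces equality.

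The gap is in your last step for the ``moreover'' clause. Your strengthened pair statement produces some $\psi\in F_n$ whose \emph{restriction} to $\Zed_{2m}$ is the isomorphism onto $\A$; that is weaker than $\varphi_\A^{-1}\in F_n$, because $\psi$ and $\varphi_\A^{-1}$ are different partial maps whenever $\dom\psi\supsetneq\Zed_{2m}$, and your recipe does produce such $\psi$. Concretely, for $n=3$, $m=1$, $\A=\{2,-2\}$: your $\varphi_\A=g\newcirc g\newcirc f_3$ has domain $\A$ and image $\Zed_2$, so $\varphi_\A^{-1}$ has domain $\{1,-1\}$, while the pair word for $(\Zed_2,\A)$ is $f_2$, whose domain is $\{1,-1,3,-3\}$; so it is not $\varphi_\A^{-1}$. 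The distinction matters, since the inverse is later composed as a partial map (Proposition~\ref{prop:genpe}). Two quick repairs: (a) precompose with the partial identity $\id_{\Zed_{2m}}$, which lies in $F_n$ because it is exactly your word for $\A=\Zed_{2m}$ (it has domain $\Zed_{2m}$ and restricts to the unique automorphism, the identity), giving $\varphi_\A^{-1}=\psi\newcirc\id_{\Zed_{2m}}$; or, better, (b) the letter-by-letter inversion you dismissed does work: each generator is injective on its whole domain, and $g^{-1}=f_2\newcirc\cdots\newcirc f_n$, $f_i^{-1}=f_{i+1}\newcirc\cdots\newcirc f_n\newcirc g\newcirc f_2\newcirc\cdots\newcirc f_{i-1}$ (empty blocks omitted when $i=2$ or $i=n$) all lie in $F_n$; since $(\alpha\newcirc\beta)^{-1}=\beta^{-1}\newcirc\alpha^{-1}$ holds exactly for injective partial maps, $F_n$ is closed under inversion and the pair-version detour becomes unnecessary. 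Also delete the parenthetical claim that the generators ``are not injective on their full domains'': they are, and their set-theoretic inverses are partial endomorphisms; the only issue ever was membership in the monoid $F_n$, which the displayed identities settle.
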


\begin{proof}  
If $n=1$  the only 
 subalgebra  $\A$   of $\Zed_{2}$ is $\Zed_{2}$ itself,  and then the  result is trivial.  We henceforth assume that $n \geq 2$.

The existence of 
$\varphi _{\A}\in\PEZ{2n}$ that maps $\A$
 isomorphically  onto $\Zed_{2m}$ is ensured by Corollary~\ref{cor:k-trans}. 
Hence the point at issue is that $\varphi_{\A} $ and its inverse 
belong to~$F_n$.
We shall prove this by induction on~$n$.
Assume that  the claim is valid when $n$ is replaced by  $n-1$. 
Our first task is one of reconciliation: 
 in order to apply our  inductive hypothesis we shall need to relate elements of $\PEZ{2n}$ 
to elements of $\PEZ{2(n-1)}$.
We  keep the notation $f_2, \ldots, f_{n},g$ for the partial endomorphisms 
of $\Zed_{2n}$ defined already and adopt the notation $h_2, \ldots , h_{n-1},p$
for the corresponding maps obtained when~$n$ is replaced by $n-1$.
Observe that for $1<i \leq n-1$ we have the following compatibilities:
$f_i{\restriction}_{\Zed_{2n-2}}=h_i$ and  $g{\restriction}_{\Zed_{2n-2}} =p$. 
Consequently, $\eta_n\colon F_{n-1} \hookrightarrow F_n$ defined by
\begin{align*}
\eta_n(h_i) &=f_i  \text{ if } 1 \leq i <  n-1,\\
\eta_n(p) &= g,\\
\eta_n(\id_{\Zed_{2n-2}}) &=g\newcirc  f_{2}\newcirc  \cdots\newcirc  f_{n-2}\newcirc  f_{n-1}\newcirc  f_n
\end{align*}
determines an embedding from $F_{n-1}$ into $F_n$.  
 
 Let~$\A$ be a $2m$-element
subalgebra of $\Zed_{2n}$, where $m \leq n$.
If $m=n$ then $\A = \Zed_{2n} = \Zed_{2m}$.  Then   
$\varphi _{\A} $ is the identity map on  $\Zed_{2n}$ and necessarily  belongs to~$F_n$. 
Now assume $m < n$.

\noindent \textit{Case 1}:  Assume 
$n \not\in \A$.   
Then $\A \subseteq\Zed_{2n-2}$ and, 
by the inductive hypothesis, we can find an isomorphism $\iota$ from 
$\A$ onto $\Zed_{2m}$ such that both $\iota$ and $\iota^{-1}$  belong to   $F_{n-1}$.
Then $\eta_{n}(\iota)\in F_n$ is an isomorphism from $\A$ onto $\Zed_{2m}$ and also $\eta_{n}(\iota^{-1})=(\eta_{n}(\iota))^{-1}\in F_n$.
  
\noindent\textit{Case 2}:  Assume $n \in \A$. 
Since $m<n$, there exists a largest $k > 0$ such that $k \notin \A$.  
Thus all of $k+1, \ldots,n$ belong to $\A$.  
The composite map $g{\newcirc } f_{2}{\newcirc } \cdots {\newcirc } f_{k}$ 
is an isomorphism from $\Zed_{2n}\setminus\{k,-k\}$ onto $\Zed_{2n-2}$; 
both this map and its inverse, \textit{viz.}  $f_{k}\newcirc  \cdots \newcirc  f_{n-1}$, 
belong to~$F_n$.   
This allows us to reduce the problem to that considered in Case~1.
\end{proof}

\begin{prop}[generation of  partial endomorphisms, even case] \label{prop:genpe}
For $n =2, 3, \ldots$  the monoid 
$\PEZ {2n}$ of partial endomorphisms of~$\Zed_{2n}$ is generated by the maps $f_2,\ldots,f_n,g$.
\end{prop}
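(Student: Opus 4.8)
The plan is to deduce the proposition directly from Proposition~\ref{lem:peclaim}, which already contains the substantive combinatorial work; the remainder is short. Since $F_n$ is by definition a submonoid of $\PEZ{2n}$, what has to be shown is the reverse inclusion: every partial endomorphism $e$ of $\Zed_{2n}$ lies in $F_n$ (the empty map being present in both monoids by the convention fixed above). So I would fix such an $e$ and set $\A=\dom e$ and $\B=\img e$, both subalgebras of $\Zed_{2n}$. By Proposition~\ref{prop:phomEven} the map $e$ is injective, so $\A\cong\B$ and hence $|\A|=|\B|=2m$ for some $m$ with $1\leq m\leq n$.

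Applying Proposition~\ref{lem:peclaim} to $\A$ and to $\B$ yields maps $\varphi,\psi\in F_n$ whose restrictions $\varphi{\restriction}_{\A}\colon\A\to\Zed_{2m}$ and $\psi{\restriction}_{\B}\colon\B\to\Zed_{2m}$ are isomorphisms, together with the inverse isomorphisms $\varphi'\colon\Zed_{2m}\to\A$ and $\psi'\colon\Zed_{2m}\to\B$, which also belong to $F_n$.

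The key step is to observe that $\psi{\restriction}_{\B}\newcirc e\newcirc\varphi'$ is a composite of isomorphisms between $2m$-element algebras, hence an automorphism of $\Zed_{2m}$, and therefore equals $\id_{\Zed_{2m}}$ by the last clause of Proposition~\ref{prop:phomEven} (applied with $n$ replaced by $m$). Since all the maps in sight are bijections among the finite algebras $\A$, $\B$, $\Zed_{2m}$, this identity rearranges to $e=\psi'\newcirc(\varphi{\restriction}_{\A})$.

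The last thing to check --- and, given Proposition~\ref{lem:peclaim}, the only remaining obstacle --- is that the right-hand side is genuinely an element of $F_n$ rather than merely something that agrees with $e$ on $\A$. I would argue that the composite $\psi'\newcirc\varphi$ formed inside $\PEZ{2n}$, which certainly lies in $F_n$, has domain exactly $\A$: it contains $\A$ because $\varphi$ carries $\A$ onto $\Zed_{2m}=\dom\psi'$, and it cannot be larger because $\varphi$ is injective and $\varphi{\restriction}_{\A}$ already exhausts $\Zed_{2m}$, so no element outside $\A$ can be mapped by $\varphi$ into $\dom\psi'$. On $\A$ the composite agrees with $\psi'\newcirc(\varphi{\restriction}_{\A})=e$, so $e=\psi'\newcirc\varphi\in F_n$. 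I expect the whole of the difficulty to have been front-loaded into Proposition~\ref{lem:peclaim}; the one place a proof here could go wrong is careless handling of the domains of partial maps under composition.
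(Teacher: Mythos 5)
Your proposal follows the same route as the paper's own proof: injectivity of a partial endomorphism via Proposition~\ref{prop:phomEven}, then Proposition~\ref{lem:peclaim} applied to $\dom e$ and $\img e$ to factor $e$ through $\Zed_{2m}$ as a composite of maps lying in $F_n$, which is exactly the paper's $f=\varphi_{\img f}\newcirc(\varphi_{\dom f})^{-1}$ argument. Your two extra steps --- identifying $\psi{\restriction}_{\B}\newcirc e\newcirc\varphi'$ as an automorphism of $\Zed_{2m}$, hence the identity, and checking that the composite $\psi'\newcirc\varphi$ formed in $\PEZ{2n}$ has domain exactly $\A$ --- merely make explicit the details the paper compresses into ``it follows immediately,'' so the proof is correct and essentially identical in approach.
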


\begin{proof}  
 Let $f\in \PEZ {2n}$. %
 By Proposition~\ref{prop:subalg} and Lemma~\ref{lem:peclaim},
$f$ is an isomorphism from its domain $\dom f$ onto its image $\img f$.
and  $m=|\dom f|=|\img f|$. By Proposition~\ref{lem:peclaim},
the  maps $\varphi _{\dom f}$ and $\varphi _{\img f}$ constructed there 
restrict to  isomorphisms onto $\Zed_{2m}$ from $\dom f$ and $\img f$ 
respectively, and they and their inverses lie in $F_n$.  
It follows immediately that $f=\varphi _{\img f}\newcirc  (\varphi _{\dom f})^{-1}\in F_n$. 
\end{proof}

We now consider partial endomorphisms in the odd case.   
This time we are able to take advantage of the existence of non-identity endomorphisms.  
We make the following definitions:
\begin{alignat*}{3}
 &\text{partial endomorphisms:} \quad 
&& f_0 
\colon \Zed_{2n+1} \setminus \{ 0\}  \to \Zed_{2n+1} , 
\qquad &&    f_0 
(a) = a \mbox{ for  $a\ne 0 $;}
\\[1ex]
&&&  f_1  
\colon \Zed_{2n+1}\setminus \{1,-1\} \to \Zed_{2n+1},
\qquad &&       f_1 
(a) = a \mbox{ for $a \ne \pm 1$};\\[1.5ex]
 &&& \text{and for } 1< i \leq  n,\\[-.5ex]
&&&f_i
\colon \Zed_{2n+1} \setminus\{i,-i\} \to \Zed_{2n+1} , 
\qquad &&f_i(a) = 
\begin{cases}
i& \mbox{if } a = i-1,\\
-i& \mbox{if } a = -(i-1),\\
a & \mbox{otherwise};
\end{cases} 
\\[2ex]
&\text{endomorphism:}
&& g\colon \Zed_{2n+1} \to \Zed_{2n+1} , 
\qquad && g(a) = \begin{cases}
a-1 & \mbox{if } a>0, \\
a+1& \mbox{if } a<0,\\
0& \mbox{if } x=0.
\end{cases}
\end{alignat*}


\begin{figure}
\begin{center}
	\begin{tikzpicture}[scale=.8]
\begin{scope}[xshift=-6cm]
\begin{scope}[xshift=-.6cm]
\node [label=left:{\small $3$\   -}]   (a3) at  (-8,4) {};
\node [label=left:{\small  $2$\  -}]   (a2) at  (-8,3) {};
\node [label=left:{\small  $1$\ -}]   (a1) at  (-8,2) {};
\node [label=left:{\small  $0$\   -}]   (a0) at  (-8,1) {};
\node [label=left:{\small  $-1$\  -}]   (a-1) at  (-8,0) {};
\node [label=left:{\small  $-2$\  -}]   (a-2) at  (-8,-1) {};
\node [label=left:{\small  $-3$\ -}]   (a-3) at  (-8,-2) {};

\draw[thin] (-8.35,4.2)--(-8.35,-2.2);
\end{scope}
\end{scope} 


\begin{scope}[xshift=-14.7cm]
		\node [label=left:{}] (lg3) at (2,4) {};
		\node [label=left:{}] (lg2) at (2,3) {};
		\node [label=left:{}] (lg1) at (2,2) {};
		\node [label=left:{}] (lg0) at (2,1) {};
		\node [label=left:{}] (lg-1) at (2,0) {};
		\node [label=left:{}] (lg-2) at (2,-1) {};
		\node [label=left:{}] (lg-3) at (2,-2) {};

   		\draw [shorten <=-2pt, shorten >=-2pt] (lg1) -- (lg2);
		\draw [shorten <=-2pt, shorten >=-2pt] (lg3) -- (lg2);
		\draw [shorten <=-2pt, shorten >=-2pt] (lg0) -- (lg1);
		\draw [shorten <=-2pt, shorten >=-2pt] (lg-1) -- (lg0);
		\draw [shorten <=-2pt, shorten >=-2pt] (lg-2) -- (lg-1);
		\draw [shorten <=-2pt, shorten >=-2pt] (lg-3) -- (lg-2);

		\draw (lg0) circle [radius=2pt];
		\filldraw (lg1) circle [radius=2pt];
		\filldraw (lg2) circle [radius=2pt];
		\filldraw (lg3) circle [radius=2pt];
		\filldraw (lg-1) circle [radius=2pt];
		\filldraw (lg-2) circle [radius=2pt];
		\filldraw (lg-3) circle [radius=2pt];
		\node [label=right:{}] (rg3) at (3,4) {};
		\node [label=right:{}] (rg2) at (3,3) {};
		\node [label=right:{}] (rg1) at (3,2) {};
		\node [label=right:{}] (rg0) at (3,1) {};
		\node [label=right:{}] (rg-1) at (3,0) {};
		\node [label=right:{}] (rg-2) at (3,-1) {};
		\node [label=right:{}] (rg-3) at (3,-2) {};

   		\draw [shorten <=-2pt, shorten >=-2pt] (rg1) -- (rg2);
		\draw [shorten <=-2pt, shorten >=-2pt] (rg3) -- (rg2);
		\draw [shorten <=-2pt, shorten >=-2pt] (rg-1) -- (rg0);
		\draw [shorten <=-2pt, shorten >=-2pt] (rg0) -- (rg1);
		\draw [shorten <=-2pt, shorten >=-2pt] (rg-2) -- (rg-1);
		\draw [shorten <=-2pt, shorten >=-2pt] (rg-3) -- (rg-2);

		\draw (rg0) circle [radius=2pt];
		\filldraw (rg1) circle [radius=2pt];
		\filldraw (rg2) circle [radius=2pt];
		\filldraw (rg3) circle [radius=2pt];
		\filldraw (rg-1) circle [radius=2pt];
		\filldraw (rg-2) circle [radius=2pt];
		\filldraw (rg-3) circle [radius=2pt];
		\draw[->] (lg1) -- (rg1);
		\draw[->] (lg-1) -- (rg-1);
		\draw[->] (lg3) -- (rg3);
		\draw[->] (lg-3) -- (rg-3);
		\draw[->] (lg2) -- (rg2);
		\draw[->] (lg-2) -- (rg-2);
\node (lab-f0)  at  (2.3,-2.5) {$f_0$};
\end{scope}


\begin{scope}[xshift=-14.56cm] 
		\node [label=left:{}] (lc13) at (5,4) {};
		\node [label=left:{}] (lc12) at (5,3) {};
		\node [label=left:{}] (lc11) at (5,2) {};
		\node [label=left:{}] (lc10) at (5,1) {};
		\node [label=left:{}] (lc1-1) at (5,0) {};
		\node [label=left:{}] (lc1-2) at (5,-1) {};
		\node [label=left:{}] (lc1-3) at (5,-2) {};

   		\draw [shorten <=-2pt, shorten >=-2pt] (lc11) -- (lc12);
		\draw [shorten <=-2pt, shorten >=-2pt] (lc13) -- (lc12);
		\draw [shorten <=-2pt, shorten >=-2pt] (lc10) -- (lc11);
		\draw [shorten <=-2pt, shorten >=-2pt] (lc1-1) -- (lc10);
		\draw [shorten <=-2pt, shorten >=-2pt] (lc1-2) -- (lc1-1);
		\draw [shorten <=-2pt, shorten >=-2pt] (lc1-3) -- (lc1-2);

		\filldraw (lc10) circle [radius=2pt];
		\draw (lc11) circle [radius=2pt];
		\filldraw (lc12) circle [radius=2pt];
		\filldraw (lc13) circle [radius=2pt];
		\draw (lc1-1) circle [radius=2pt];
		\filldraw (lc1-2) circle [radius=2pt];
		\filldraw (lc1-3) circle [radius=2pt];
		\node [label=right:{}] (rc13) at (6,4) {};
		\node [label=right:{}] (rc12) at (6,3) {};
		\node [label=right:{}] (rc11) at (6,2) {};
		\node [label=right:{}] (rc10) at (6,1) {};
		\node [label=right:{}] (rc1-1) at (6,0) {};
		\node [label=right:{}] (rc1-2) at (6,-1) {};
		\node [label=right:{}] (rc1-3) at (6,-2) {};

   		\draw [shorten <=-2pt, shorten >=-2pt] (rc11) -- (rc12);
		\draw [shorten <=-2pt, shorten >=-2pt] (rc13) -- (rc12);
		\draw [shorten <=-2pt, shorten >=-2pt] (rc1-1) -- (rc10);
		\draw [shorten <=-2pt, shorten >=-2pt] (rc10) -- (rc11);
		\draw [shorten <=-2pt, shorten >=-2pt] (rc1-2) -- (rc1-1);
		\draw [shorten <=-2pt, shorten >=-2pt] (rc1-3) -- (rc1-2);

		\filldraw (rc10) circle [radius=2pt];
		\draw (rc11) circle [radius=2pt];
		\filldraw (rc12) circle [radius=2pt];

		\filldraw (rc13) circle [radius=2pt];
		\draw (rc1-1) circle [radius=2pt];
		\filldraw (rc1-2) circle [radius=2pt];
		\filldraw (rc1-3) circle [radius=2pt];
		\draw[->] (lc10) -- (rc10);
		\draw[->] (lc13) -- (rc13);
		\draw[->] (lc1-3) -- (rc1-3);
		\draw[->] (lc12) -- (rc12);
		\draw[->] (lc1-2) -- (rc1-2);
\node (lab-f1)  at  (5.3,-2.5) {$f_1$};
\end{scope}

\begin{scope}[xshift=-5.5cm]
		\node [label=left:{}] (lf3) at (-1,4) {};
		\node [label=left:{}] (lf2) at (-1,3) {};
		\node [label=left:{}] (lf1) at (-1,2) {};
		\node [label=left:{}] (lf0) at (-1,1) {};
		\node [label=left:{}] (lf-1) at (-1,0) {};
		\node [label=left:{}] (lf-2) at (-1,-1) {};
		\node [label=left:{}] (lf-3) at (-1,-2) {};

   		\draw [shorten <=-2pt, shorten >=-2pt] (lf1) -- (lf2);
		\draw [shorten <=-2pt, shorten >=-2pt] (lf3) -- (lf2);
		\draw [shorten <=-2pt, shorten >=-2pt] (lf1) -- (lf0);
		\draw [shorten <=-2pt, shorten >=-2pt] (lf-1) -- (lf0);
		\draw [shorten <=-2pt, shorten >=-2pt] (lf-2) -- (lf-1);
		\draw [shorten <=-2pt, shorten >=-2pt] (lf-3) -- (lf-2);

		\filldraw (lf1) circle [radius=2pt];
		\draw (lf2) circle [radius=2pt];
		\filldraw (lf3) circle [radius=2pt];
	\filldraw (lf0) circle [radius=2pt];
			\filldraw (lf-1) circle [radius=2pt];
		\draw (lf-2) circle [radius=2pt];
		\filldraw (lf-3) circle [radius=2pt];
		\node [label=right:{}] (rf3) at (0,4) {};
		\node [label=right:{}] (rf2) at (0,3) {};
		\node [label=right:{}] (rf1) at (0,2) {};
		\node [label=right:{}] (rf0) at (0,1) {};
		\node [label=right:{}] (rf-1) at (0,0) {};
		\node [label=right:{}] (rf-2) at (0,-1) {};
		\node [label=right:{}] (rf-3) at (0,-2) {};

   		\draw [shorten <=-2pt, shorten >=-2pt] (rf1) -- (rf2);
		\draw [shorten <=-2pt, shorten >=-2pt] (rf3) -- (rf2);
			\draw [shorten <=-2pt, shorten >=-2pt] (rf0) -- (rf1);
\draw [shorten <=-2pt, shorten >=-2pt] (rf-1) -- (rf0);

		\draw [shorten <=-2pt, shorten >=-2pt] (rf-2) -- (rf-1);
		\draw [shorten <=-2pt, shorten >=-2pt] (rf-3) -- (rf-2);

		\draw (rf1) circle [radius=2pt];
		\filldraw (rf2) circle [radius=2pt];
		\filldraw (rf3) circle [radius=2pt];
		\draw (rf-1) circle [radius=2pt];
		\filldraw (rf-2) circle [radius=2pt];
		\filldraw (rf-3) circle [radius=2pt];
             	\filldraw (rf0) circle [radius=2pt];
		\draw[->] (lf1) -- (rf2);
		\draw[->] (lf0) -- (rf0);
		\draw[->] (lf-1) -- (rf-2);
		\draw[->] (lf3) -- (rf3);
		\draw[->] (lf-3) -- (rf-3);
\node (lab-f2)  at  (-.5,-2.5) {$f_2$};
\end{scope}

\begin{scope}[xshift=-2.5cm]
		\node [label=left:{}] (lf3) at (-1,4) {};
		\node [label=left:{}] (lf2) at (-1,3) {};
		\node [label=left:{}] (lf1) at (-1,2) {};
		\node [label=left:{}] (lf0) at (-1,1) {};
		\node [label=left:{}] (lf-1) at (-1,0) {};
		\node [label=left:{}] (lf-2) at (-1,-1) {};
		\node [label=left:{}] (lf-3) at (-1,-2) {};

   		\draw [shorten <=-2pt, shorten >=-2pt] (lf1) -- (lf2);
		\draw [shorten <=-2pt, shorten >=-2pt] (lf3) -- (lf2);
		\draw [shorten <=-2pt, shorten >=-2pt] (lf1) -- (lf0);
		\draw [shorten <=-2pt, shorten >=-2pt] (lf-1) -- (lf0);
		\draw [shorten <=-2pt, shorten >=-2pt] (lf-2) -- (lf-1);
		\draw [shorten <=-2pt, shorten >=-2pt] (lf-3) -- (lf-2);

		\filldraw (lf1) circle [radius=2pt];
		\filldraw (lf2) circle [radius=2pt];
		\draw (lf3) circle [radius=2pt];
	\filldraw (lf0) circle [radius=2pt];
			\filldraw (lf-1) circle [radius=2pt];
		\filldraw (lf-2) circle [radius=2pt];
		\draw (lf-3) circle [radius=2pt];
		\node [label=right:{}] (rf3) at (0,4) {};
		\node [label=right:{}] (rf2) at (0,3) {};
		\node [label=right:{}] (rf1) at (0,2) {};
		\node [label=right:{}] (rf0) at (0,1) {};
		\node [label=right:{}] (rf-1) at (0,0) {};
		\node [label=right:{}] (rf-2) at (0,-1) {};
		\node [label=right:{}] (rf-3) at (0,-2) {};

   		\draw [shorten <=-2pt, shorten >=-2pt] (rf1) -- (rf2);
		\draw [shorten <=-2pt, shorten >=-2pt] (rf3) -- (rf2);
\draw [shorten <=-2pt, shorten >=-2pt] (rf0) -- (rf1);		
\draw [shorten <=-2pt, shorten >=-2pt] (rf0) -- (rf-1);

		\draw [shorten <=-2pt, shorten >=-2pt] (rf-2) -- (rf-1);
		\draw [shorten <=-2pt, shorten >=-2pt] (rf-3) -- (rf-2);

		\filldraw (rf1) circle [radius=2pt];
		\draw (rf2) circle [radius=2pt];
		\filldraw (rf3) circle [radius=2pt];
             \filldraw (rf0)  circle [radius =2pt];
		\filldraw (rf-1) circle [radius=2pt];
		\draw (rf-2) circle [radius=2pt];
		\filldraw (rf-3) circle [radius=2pt];
		\draw[->] (lf1) -- (rf1);
		\draw[->] (lf0) -- (rf0);
		\draw[->] (lf-1) -- (rf-1);
		\draw[->] (lf2) -- (rf3);
		\draw[->] (lf-2) -- (rf-3);
\node (lab-f3)  at  (-.5,-2.5) {$f_3$};
\end{scope}

\begin{scope}[xshift=-8.5cm]
		\node [label=left:{}] (lt3) at (8,4) {};
		\node [label=left:{}] (lt2) at (8,3) {};
		\node [label=left:{}] (lt1) at (8,2) {};
		\node [label=left:{}] (lt0) at (8,1) {};
		\node [label=left:{}] (lt-1) at (8,0) {};
		\node [label=left:{}] (lt-2) at (8,-1) {};
		\node [label=left:{}] (lt-3) at (8,-2) {};

   		\draw [shorten <=-2pt, shorten >=-2pt] (lt1) -- (lt2);
		\draw [shorten <=-2pt, shorten >=-2pt] (lt3) -- (lt2);
		\draw [shorten <=-2pt, shorten >=-2pt] (lt0) -- (lt1);
		\draw [shorten <=-2pt, shorten >=-2pt] (lt-1) -- (lt0);
		\draw [shorten <=-2pt, shorten >=-2pt] (lt-2) -- (lt-1);
		\draw [shorten <=-2pt, shorten >=-2pt] (lt-3) -- (lt-2);

		\filldraw (lt0) circle [radius=2pt];
		\filldraw (lt1) circle [radius=2pt];
		\filldraw (lt2) circle [radius=2pt];
		\filldraw (lt3) circle [radius=2pt];
		\filldraw (lt-1) circle [radius=2pt];
		\filldraw (lt-2) circle [radius=2pt];
		\filldraw (lt-3) circle [radius=2pt];
		\node [label=right:{}] (rt3) at (9,4) {};
		\node [label=right:{}] (rt2) at (9,3) {};
		\node [label=right:{}] (rt1) at (9,2) {};
		\node [label=right:{}] (rt0) at (9,1) {};
		\node [label=right:{}] (rt-1) at (9,0) {};
		\node [label=right:{}] (rt-2) at (9,-1) {};
		\node [label=right:{}] (rt-3) at (9,-2) {};

   		\draw [shorten <=-2pt, shorten >=-2pt] (rt1) -- (rt2);
		\draw [shorten <=-2pt, shorten >=-2pt] (rt3) -- (rt2);
		\draw [shorten <=-2pt, shorten >=-2pt] (rt-1) -- (rt0);
		\draw [shorten <=-2pt, shorten >=-2pt] (rt0) -- (rt1);
		\draw [shorten <=-2pt, shorten >=-2pt] (rt-2) -- (rt-1);
		\draw [shorten <=-2pt, shorten >=-2pt] (rt-3) -- (rt-2);

		\filldraw (rt0) circle [radius=2pt];
		\filldraw (rt1) circle [radius=2pt];
		\filldraw (rt2) circle [radius=2pt];
		\draw (rt3) circle [radius=2pt];
		\filldraw (rt-1) circle [radius=2pt];
		\filldraw (rt-2) circle [radius=2pt];
		\draw (rt-3) circle [radius=2pt];
		\draw[->] (lt1) -- (rt0);
		\draw[->] (lt-1) -- (rt0);
		\draw[->] (lt0) -- (rt0);
		\draw[->] (lt3) -- (rt2);
		\draw[->] (lt-3) -- (rt-2);
		\draw[->] (lt2) -- (rt1);
		\draw[->] (lt-2) -- (rt-1);
\node (lab-g)  at  (8.4,-2.5) {$g$};
\end{scope}
		\end{tikzpicture}
\end{center}
\caption{A set of generators for  $\PEZ{7}$ \label{fig:PEodd}}
\end{figure}

%

\begin{prop}[generation of  partial endomorphisms, odd case] \label{prop:genpeOdd}
For $n =1, 2, \ldots$, the partial endomorphism monoid $\PEZ {2n+1}$ 
is  generated by the maps $f_0,  \ldots, f_n, 
 g$.
\end{prop}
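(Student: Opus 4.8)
The plan is to follow the template of Proposition~\ref{prop:genpe}, adapted to the two features special to the odd case: $\Zed_{2n+1}$ carries the non-identity endomorphism $g$ and its powers, and, in consequence, its partial endomorphisms need not be injective. Write $G_n$ for the submonoid of $\PEZ{2n+1}$ generated by $f_0,\dots,f_n,g$; it suffices to prove $\PEZ{2n+1}\subseteq G_n$. The first move is to import the even-case machinery. Let $\bar f_2,\dots,\bar f_n,\bar g$ be the generators of $F_n$ from Proposition~\ref{prop:genpe}, regarded as partial endomorphisms of $\Zed_{2n}=\Zed_{2n+1}\setminus\{0\}$. Comparing definitions gives $\bar f_i=f_i\circ f_0$ for $2\leq i\leq n$ and $\bar g=g\circ f_1\circ f_0$, so $F_n\subseteq G_n$ and hence, by Proposition~\ref{prop:genpe}, $\PEZ{2n}\subseteq G_n$. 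Moreover, extending a partial endomorphism of $\Zed_{2n}$ by the clause $0\mapsto 0$ defines a monoid homomorphism $E\colon\PEZ{2n}\to\PEZ{2n+1}$ with $E(\bar f_i)=f_i$ and $E(\bar g)=g\circ f_1$, so $E$ maps $\PEZ{2n}$ into $G_n$. Two consequences will be used: (a)~$\id_\A\in G_n$ for every subalgebra $\A$ of $\Zed_{2n+1}$ --- either $\id_\A\in\PEZ{2n}=F_n$ (if $0\notin\A$) or $\id_\A=E(\id_{\A\setminus\{0\}})$ (if $0\in\A$); (b)~combining Proposition~\ref{lem:peclaim} with $F_n\subseteq G_n$ and with $E$, every subalgebra $\A$ of $\Zed_{2n+1}$ admits $\varphi_\A\in G_n$ whose restriction to $\A$ is an isomorphism onto the standard copy $\Zed_{|\A|}$, a subalgebra of $\Zed_{2n+1}$, together with $\varphi_\A^{-1}\in G_n$ whose restriction to $\Zed_{|\A|}$ is the inverse isomorphism. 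Since each $\Zed_k$ is rigid (Propositions~\ref{prop:phomEven} and~\ref{prop:phomOdd}), this isomorphism $\A\to\Zed_{|\A|}$ is the \emph{only} one; being a lattice isomorphism that respects $\neg$, it carries each \emph{bottom} subalgebra of $\A$ --- a subalgebra of the form $\{x\in\A\mid |x|\leq t\}$ --- onto the bottom subalgebra of $\Zed_{|\A|}$ of the same cardinality.

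Now take $f\in\PEZ{2n+1}$ and set $\A=\dom f$, $\B=\img f$, $\K=f^{-1}(0)$, a subalgebra of $\A$. A short order computation, in the spirit of the opening of the proof of Proposition~\ref{prop:phomEven}, shows that $\K$ is a bottom subalgebra of $\A$: if $a\in\A$ with $0<a$ and $f(a)=0$, then for $a'\in\A$ with $0<a'\leq a$ we have $f(a')\leq f(a)=0$, while $-a'\leq a'$ gives $f(-a')\leq f(a')$, i.e.\ $-f(a')\leq f(a')$, whence $f(a')\geq 0$; so $f(a')=0$, and $f(-a')=\neg f(a')=0$ as well. Hence $\varphi_\A$ carries $\K$ onto the bottom subalgebra of $\Zed_{|\A|}$ of cardinality $|\K|$. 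Choose $j\geq 0$ so that $\Zed_{2j+1}$ meets $\Zed_{|\A|}$ in exactly that subalgebra. Then $g^j$, which collapses $\Zed_{2j+1}$ to $0$ and slides the remaining levels inward, restricts on $\Zed_{|\A|}$ to a surjection onto $\Zed_{|\B|}$ with kernel $\varphi_\A(\K)$.

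Put $c:=\varphi_\B^{-1}\circ g^j\circ\varphi_\A\in G_n$. Since $\varphi_\A$ is defined on $\A$ with values in $\Zed_{|\A|}$, $g^j$ is total, and $g^j(\Zed_{|\A|})=\Zed_{|\B|}$ lies in the domain of $\varphi_\B^{-1}$, the domain of $c$ contains $\A$, and $c{\restriction}_\A$ is a homomorphism from $\A$ onto $\varphi_\B^{-1}(\Zed_{|\B|})=\B$ with kernel the preimage under $\varphi_\A{\restriction}_\A$ of $\varphi_\A(\K)$, namely $\K$ --- exactly the surjection data carried by $f$. As $\B$ is rigid, the isomorphisms $\A/\K\to\B$ induced by $c{\restriction}_\A$ and by $f$ coincide, so $c{\restriction}_\A=f$. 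Therefore $f=c\circ\id_\A\in G_n$, and the proof is complete.

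The step I expect to require the most care is the final one: confirming that the triple composite genuinely equals $f$, not just a proper extension of it. This rests on the rigidity of the algebras $\Zed_k$ --- so that the collapse $g^j$ and the framing isomorphisms $\varphi_\A,\varphi_\B$ admit no hidden ``twist'' --- and on the observation that $\id_\A\in G_n$, which lets one cut the domain of $c$ back to $\A$. By comparison, checking that $E$ is a monoid homomorphism and that $\bar g=g\circ f_1\circ f_0$ is routine verification against the definitions.
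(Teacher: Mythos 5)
Your proof is correct and takes essentially the same route as the paper's own (sketched) argument: normalise the domain onto a standard copy using the even-case generation results transported into $\PEZ{2n+1}$ via the $0\mapsto 0$ extension, collapse the zero-fibre with a suitable power of $g$, and carry the result onto the image with the inverse normalising map, with rigidity pinning down exact equality. The only cosmetic wrinkle is the degenerate case $f^{-1}(0)=\emptyset$ (so $f$ injective and $0\notin\dom f$), where your recipe still goes through with $j=0$.
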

\begin{proof}
If $n=1$, it is easy to see that $\PEZ {3} = \{\id_{\Zed_{3}},f_0,f_1,g, g\newcirc  f_0\}$.

Assume now that $n>1$.
We provide only a sketch of the proof.
We denote by $F_{2n+1}$ the subset of $\PEZ {2n+1}$ 
generated by $f_0,f_1,f_2, \ldots, f_n,g$.
Given $p\in \PEZ {2n+1}$ the first step is to find a $p_0$ in $F_{2n+1}$ 
that has the same domain as  $p$. 
This is easily achieved using only $f_0,f_1,f_2,\ldots, f_n$. 
Secondly one applies $g$ sufficiently many times to $p_0$  
to obtain $p_1$ such that $p_1(a)=0$ if and only if $p(a)=0$. 
Observe that now the sizes of the images of $p$ and $p_1$ are the same.
Hence these images are isomorphic as subalgebras of $\Zed_{2n+1}$.
Finally, an adaptation of Lemma~\ref{lem:peclaim} 
(now using $g, f_2,\ldots,f_n$ as defined 
for $\Zed_{2n+1}$)  
will provide a partial endomorphism in $F_{2n+1}$ that is defined at~$0$ 
and that maps the image of $p_1$ to the image of $p$.
\end{proof}

\section{Natural dualities and the Test Spaces Method}
\label{Sec:NatDual}  

We first 
lay the groundwork for the presentation of our Test Spaces Method by recalling very briefly the theory of natural dualities as we shall use it. 
A textbook treatment can be found  in~\cite{CD98}.  
Alternative sources 
which jointly cover the material on which  we shall draw in black-box fashion are: 
~\cite{tendoll} (an introductory survey of natural duality theory in general, aimed at  novices)  
and \cite{GLT,HPbirk} 
(survey articles  focusing on the theory as it applies to  classes of algebras with distributive lattice reducts).  In addition 
\cite{CDquest} provides  a detailed contextualised  account of strong dualities.

We shall tailor our exposition to our intended applications.  In particular we shall
restrict arities of operations and relations to those we shall require.  
   Let $\CA$ be the quasivariety generated by a finite algebra $\M$, that is, $\CA = \ISP(\M)$, regarded as a category by taking the morphisms to be all homomorphisms.
Our aim is to find a second category~$\CX $ 
whose objects are topological structures of common type  and 
which is dually equivalent to~$\CA$ via 
functors $\D \colon \CA \to \CX $ and $\E \colon \CX  \to \CA$.  

\begin{df}[alter ego]  \label{def:alterego}  
{\rm 
We consider a topological structure 
$
\MT = (M; G,H,K,R,\Tp),
$ 
 where 
\begin{newlist}
\item[$\bullet$] $\Tp$ is the discrete topology on~$M$;
\item[$\bullet$] $G$ is a set of endomorphisms of~$\M$;
\item[$\bullet$] $H$ is a set of (non-total) partial endomorphisms on $\M$;
\item[$\bullet$] $K$ is a set of one element subalgebras of $\M$ (considered as constants);
\item[$\bullet$]  $R$ is a set of  binary relations 
on~$M$ such that each 
$r\in R$ is the universe of a 
subalgebra~$\mathbf r$ of~$\M^2$. 
\end{newlist} 
We refer to such a topological structure $\MT$ as an \defn{alter ego} for~$\M$ 
and say that $\MT$ and $\M$ are \defn{compatible}.   
}
\end{df}

Using an alter ego $\MT$ we build the desired category~$\CX$ of 
structured topological spaces. 
  We define the topological quasivariety generated by~$\MT$ to be
$\CX  := \IScP(\MT) $,  
the class of isomorphic copies of closed substructures of 
non-empty powers of $\MT$,
with $^+$ indicating that the empty structure is included.  Here a non-empty
power $\MT^S$ of  $\MT$ carries the product topology and is equipped with the pointwise
liftings of the members of 
$G\cup H \cup K\cup R$.  Closed substructures and isomorphic 
copies are defined in the expected  way;   Thus  a member  
$\X$ of $\CX$ is a structure
 $(X; G^\X, H^\X, K^{\X}, R^\X,\Tp^\X)$ of the same type as
$\MT$.   Details are given in
\cite[Section 1.4]{CD98}.  
 We make~$\CX $ into  a category by taking all continuous 
structure-preserving maps as the morphisms.

The best-known natural dualities---and in particular those employed hitherto in the study of admissible rules---have alter egos which contain no partial endomorphisms which are not total, that is, $H=\emptyset$.  Except in 
very special cases, partial operations play a crucial role in the dualities we construct for Sugihara algebras;  the exceptions are the quasivarieties $
\SA_k$ for $k \leq 3$. 
 A full discussion of the technical niceties that arise when an alter ego
contains 
partial operations  is given  in \cite[Chapter~2]{CD98}.
We draw attention here to  the  constraints on the domain and range of 
a morphism
$\varphi  \colon \Y \to \Z$ in $\CX$ 
 when partial operations are present, that is, $H \ne \emptyset$.  
Given  any $h \in H$ and $y \in \dom h^\Y$,  we must  restrict  $\varphi(y)$ to lie in  
$ \dom h^\Z$; the  preservation condition becomes $h^\Z(\varphi (y)) = \varphi (h^\Y(y))$, for all $y \in \dom h^Y$.    
The morphism $\varphi$ is said to be 
an \defn{embedding}
if  
$\varphi(\Y)$ is a substructure of~$\Z$ and $\varphi\colon \Y \to \varphi(\Y)$ 
is an isomorphism.   This implies  in particular that
$y \in \dom h^\Y$ whenever $\varphi(y) \in h^\Z$. 
 We henceforth use the same symbol for an operation $g \in G$ and for its pointwise lifting to a power of~$\MT$ and more generally for its interpretation 
on any $\X \in \IScP(\MT)$.  We do likewise for members of~$H$ and~$R$.
 This will cause no confusion in practice, since the  meanings
attributed to the various 
 symbols 
  will  be dictated by the context.   

Assume, as above,  that  $\MT$ is an alter ego for $\M$ and that $\CA= \ISP(\M)$ and $\CX = \IScP(\MT)$. 
Given this scenario we can set up a dual adjunction between $\CA$ and~$\CX$,
based on hom-functors into $\M$ and into~$\MT$.  
Let $\A \in \CA$ and $\X \in \CX$. 
Then  $\CA(\A,\M)$  is the universe of a closed substructure of $\MT^{A} $ and 
$\CX (\X,\MT)$  is the universe of a subalgebra of~$\M^{X}$.
As a consequence of compatibility, there exist well-defined
contravariant  hom-functors $\D \colon \CA \to \CX $
and $\E \colon \CX \to \CA$:
\begin{alignat*}{3}
&\text{on objects:} & \hspace*{3.5cm}  &\D \colon  \A \mapsto  \CA(\A,\M), 
  \hspace*{3.5cm}  \phantom{\text{on objects:}}&&\\
&\text{on morphisms:}  & &  \D \colon  x \mapsto - \newcirc  x&& \\
\shortintertext{and }
&\text{on objects:} & & \E  \colon  \X \mapsto  \CX (\X,\MT),
\phantom{\text{on objects:}}&&\\
&\text{on morphisms:}  & &\E  \colon  \varphi  \mapsto - \newcirc  \varphi . 
\phantom{\text{on morphisms:}}&&
\end{alignat*}
Given  $\A \in \CA$ we refer to $\D(\A)$ as the 
\defn{{\rm (}natural{\,\rm)} dual 
space of} $\A$. 

Let  $\A\in \CA$ and $\X \in \CX$.    There exist natural  evaluation maps
 $\esubA 
\colon \A \to \E\D(\A)$ and $\epsub{\X}\colon \X \to \D\E(\X)$,
with $ \esubA (a)\colon f \mapsto f(a)$ and $\epsub{\X}(x) \colon g \mapsto g(x)$.
Moreover,  $(\D,\E,e,\epsilon)$ is a dual adjunction (see~\cite;Chapter~2]{CD98}). 
Each of the maps 
$\esubA$ and $\epsub{\X} $ is an embedding.  
These claims hinge on the compatibility of~$\MT$ and~$\M$.  
We say that~$\MT$ \defn{yields a duality on}~$\CA$
if each $\esubA$ 
is also surjective.   
If in addition each $\epsub{\X} $ 
is  surjective and so an isomorphism, 
we say that the duality yielded by~$\MT$ is \defn{full}.  
In this case $\CA$ and~$\CX$ are dually equivalent.

With the basic categorical framework now in place, we  interpose an example  to illustrate the calculation of dual spaces
when partial operations are present.

\begin{ex}\label{Ex:Sug46}   We work with the quasivariety  $\SA_6= \ISP(\Zed_{6})$   
and take  our alter ego  $\twiddle{\Zed_{6}}$  to include the generating set 
$\{f_2,f_3,g\} $ for $\PEZ{6}$ 
(as shown in Figure~\ref{fig:pe-Z6})   and the relations  $\eq_1$ and $\eq_2$ (as in Proposition~\ref{Prop:Congruences}).   
 (This will turn out to be a dualising alter ego,
 but this fact is not needed  here.)    Denote by  $\D_{6}\colon \SA_{6} \to \IScP(\twiddle{\Zed_{6}})$  the functor determined by $\twiddle{\Zed_{6}}$.

Note that $\Zed_4\in \SA_{4}\subset \SA_{6}$.   We shall calculate 
$\D_{6}(\Zed_{4})$ as a member of $\CX = \IScP(\twiddle{\Zed_{6}})$.  Its universe 
 consists of all the homomorphisms  from $\Zed_4$ into $\Zed_6$.
By \ref{prop:phomEven} every 
 such map is injective.   
From this we easily see that  $\SA_{6}(\Zed_{4},\Zed_6)$ consists of three maps $e_1,e_2$, and $e_3$.  These are given by
 $e_1=(-3,-2,2,3)$, $e_2= (-3,-1,  1,3)$ and $e_3= (-2,-1,1,2)$.
Here we  
use   the tuple $(\varphi (-2),\varphi (-1),\varphi (1),\varphi(2))$ to depict  $\varphi \in \SA_{6}(\Zed_{4},\Zed_6) = \D_{6}(\Zed_{4})$.

Consider 
the action  on  $\D_6(\Zed_4)$ of  $f_2, f_3$ and~$g$. 
   For each 
 $\varphi \in \{ e_1,e_2,e_3\}$ and any $y \in \dom {\varphi} $,  we require  $\varphi(y) \in \dom{f_2 } \cap \dom{f_3} \cap \dom{g}$.
We deduce that 
$\dom{f_2}=\{e_1\}$,  $\dom{f_3}=\{e_2\}$  and  $\dom{g}=\{e_3\}$, and $f_2(e_1)=e_2$, $f_3(e_2)=e_3$ and $g(e_3)=e_1$.

Similarly, the binary relations are lifted pointwise. The equivalence classes of $\eq_1$ consist only of singletons and 
those  of $\eq_2$ are $\{e_1,e_2\}$ and $\{e_3\}$.
\begin{figure}[h]
\begin{center}
	\begin{tikzpicture}
		\node [label=below:{$e_1$}] (e1) at (-2,0) {};
		\node [label=below:{$e_2$}] (e2) at (0,0) {};
		\node [label=below:{$e_3$}] (e3) at (2,0) {};

		\draw (e1) circle [radius=2pt];
		\draw (e2) circle [radius=2pt];
		\draw (e3) circle [radius=2pt];
%
		\draw[->] (e1) to  (e2);
		\node[below,font=\footnotesize] at (-1,0) {$f_2$};
		\draw[->] (e2) -- (e3);
		\node[below,font=\footnotesize] at (1,0) {$f_3$};
		\draw[->] (e3) to [out = 135, in = 45](e1);
		\node[above,font=\footnotesize] at (0,1) {$g$};
%
	
			\draw (-2.25,.25) -- (-1.75,.25) -- (-1.75,-.6) -- (-2.25, -.6) -- cycle;
			\draw (-.25,.25) -- (.25,.25) -- (.25,-.6) -- (-.25, -.6) -- cycle;
			\draw (2.25,.25) -- (1.75,.25) -- (1.75,-.6) -- (2.25, -.6) -- cycle;
			\draw[dashed] (2.35,.35) -- (1.65,.35) -- (1.65,-.7) -- (2.35, -.7) -- cycle;
			\draw[dashed] (-2.35,.35) -- (.35,.35) -- (.35,-.7) -- (-2.35, -.7) -- cycle;

	\end{tikzpicture}
\end{center}
\caption{$\D_{6}(\Zed_{4})$.}\label{Sug64}
\end{figure}
\end{ex}

We now return to general theory.
  Our objective in setting up a natural duality for a given quasivariety~$\CA$ is 
thereby to transfer algebraic problems about~$\CA$ into problems 
about the dual category~$\CX$ using the hom-functors~$\D$ and~$\E$ 
to toggle backwards and forwards.

Our needs in this paper are very specific,  tailored as they are to the Test Spaces Method (which we shall recall shortly).
First of all, we  need a dual description of finitely generated free algebras.
Here we can call on a fundamental fact.
A dualising alter ego~$\MT$ plays a special role in the duality it sets up:
it is the dual space of  $\F_{\CA}(1)$.
More generally, the free algebra $\F_{\CA}(\kappa )$ generated by a non-empty set~$S$ of cardinality~$\kappa$ 
has dual space~$\MT^S$  \cite[Section~2.2, Lemma~2.1]{CD98}.  
Assuming the duality is full the free algebra $\F_{\CA}(\kappa)$ is  concretely realised as $\E(\MT^\kappa)$, with the coordinate projections as the free generators.

We also require  a  duality for $\CA$ for which there 
are dual characterisations  of homomorphisms 
which are injective and of those which are surjective.  
This is not a categorical triviality since, for  morphisms in~$\CX$,  epi (mono) 
may not equate to surjective (injective).   
We need the notion of a duality which is \defn{strong}, 
which may most concisely be  defined as one in which $\twiddle{\M}$ 
is injective in the topological quasivariety $\CX$ that it generates.
The technical details, and equivalent definitions, need not concern us here 
(they can be found in \cite[Chapter~3]{CD98}
or \cite{CDquest}).  
We shall exploit without  proof two key facts.  
Firstly,
any strong duality is full. 
Secondly, in a strong  duality,  each of~$\D$ and~$\E$ has the property that 
it converts embeddings to surjections and surjections to embeddings, and vice versa;  see \cite[Chapter~3, Lemma 2.4]{CD98} 
or \cite[Section~3]{CDquest}.

The Test Spaces Method (TSM) 
is an algorithm that takes as input a finite algebra $\M$ and returns a  set $\mathcal{K} $ of algebras in $\ISP(\M)$ such that $\ISP(\M)=\ISP(\mathcal{K})$ and 
$\mathcal{K}$ is a minimal set of finite algebras each of minimum size.  
So, according to the definitions in Section~\ref{sec:intro},  $\mathcal{K}$ is the admissibility set and,  if $\mathcal{K} = \{\B\}$, as occurs in our applications in this paper,  then~$\B$ is the admissibility 
algebra. 
The method relies  on the availability of a strong duality for $\CA:= \ISP(\M)$.

To  describe TSM,  as it is given in \cite[Section~4]{CFMP}
and as we shall apply 
it, we first
recall some definitions. Assume that
$\D$ and $\E$  
set up a strong duality between $\CA:= \ISP(\M)$ and $\CX$
and that~$\M $ is a finite $s$-generated algebra in~$\CA$. 
We refer to a triple $(\X,\gamma,\eta)$ as a \emph{Test Space configuration}, or \emph{{\rm TS}-configuration} for short, if 
 \[ 
 \D(\M) \xhookrightarrow{\ \eta \ }\alg{X} \xdoubleheadleftarrow{\ \gamma\ } \D(\F_{\CA}(s)) = \twiddle{\M}^s,
\]
where the morphism $\eta\colon\D(\M)\hookrightarrow \X$ is an embedding 
and the morphism
$\gamma\colon \twiddle{\M}^s \twoheadrightarrow \X$ is surjective.
Since we are assuming the duality is strong, fullness then ensures that any
TS-configuration gives rise to an algebra $\A :=\E(\X)$ in $\CA$
which is a both a subalgebra of $\F_{\CA}(s)$ and such that $\M$ is a homomorphic image of~$\A$.   (See \cite[Section~2]{CFMP}
for a contextual
discussion.)  

Let $\X$ be  a finite structure in $\IScP (\twiddle{\M})$. 
We denote by $\mathcal{S}_\X$ the set of substructures  $\Z$  
of~$\X$ 
with the property that 
 $\Z$ is generated (as a substructure of $\X$) by $\img \varphi _1\cup \cdots\cup\img\varphi _m$, where $\varphi _1, \ldots, \varphi _m \colon \X \to \X$ are morphisms. 
It was shown in 
\cite[Proposition~5.3]{CFMP} 
that $\mathcal{S}_\X$ is a lattice for the inclusion order.

\begin{table}[ht]
\begin{center}
\begin{tabular}{lp{15.3cm}}
\hline
\multicolumn{2}{c}{Test Spaces Method}\bigstrut\\
\hline 
\\[-2.5ex] 
0.&{\tt Find $\twiddle{\M}$ that yields a strong duality for $\CA=\ISP(\M)$.}\\
1. & {\tt Compute $\D(\M)$.}\\
2. &{\tt Find a TS-configuration $(\X,\gamma,\eta)$ with~$X$ of minimum size.}  \\
3. &{\tt Determine the set $\mathcal{M}$ of maximal join-irreducible elements of $\mathcal{S}_{\X}$}.\\
4. & {\tt Construct a set $\mathcal{V}$ by repeatedly 
removing from $\mathcal{M}$ any structure   
that is a morphic image of some other structure in the set.}\\
5. & {\tt Compute  $\K=\{\,E(\X)\mid\X\in \mathcal{V}\,\}$.}
\\[-2.5ex]
\end{tabular}
\end{center}
\caption{Test Spaces Method}\label{table:TSM}
\end{table}

Suppose we have a candidate TS-configuration $(\X,\gamma,\eta)$ and that 
$\X$  is join-irreducible in the lattice $\mathcal{S}_\X$.  Then 
$\X$ itself is the unique maximal join-irreducible.  
Observe that this happens if, for some specified element of~$\X$, 
any morphism $\varphi  \colon \X \to \X$ whose image contains that element
is such that  the substructure generated by $\img \varphi $ is~$\X$.  
In this situation, Step~3 is accomplished without the need to check in Step~2 that $\X$ is of minimum size.
Moreover, Step~4 is bypassed, and we proceed straight to Step~5.  
Under this scenario, the outcome of the Test Spaces Method is a TS-configuration
$(\X, \gamma,\eta)$   such that
 $\E(\X)$
is the admissibility algebra we seek, with the property that $\X$ is both a substructure and a quotient in $\CX$ of $\MT^s$.   Dually (because the duality
is strong),  $\E(\X)$ is both a quotient and a subalgebra of $\F_{\CA}(s)$.  
See Corollary~\ref{cor:gens} below.


Step 0 demands a strong duality for the quasivariety under consideration. 
 The class of  Sugihara algebras 
 is  lattice-based.  
In this setting  the \textit{existence} of some strongly dualising alter ego  for any finitely generated subquasivariety is not at issue:   general theory ensures this
(see the discussion in \cite{CD98}
that we be able to  \emph{exhibit} an alter ego for each of the quasivarieties
$\SA_{2n}$ and $\SA_{2n+1}$. 
We also wish to do this in an efficient manner, discarding from the set 
$G \cup H \cup R$ any elements which  are not needed for the duality to work.
This will make the duality easier to understand and so to apply.
The result we call on here is the  specialisation to distributive-lattice-based algebras of the Piggyback Duality Theorem,  as given in 
\cite[Chapter 7, Theorem~2.1]{CD98},
 combined,
for the strongness assertion, with the NU Strong Duality Theorem Corollary \cite[Chapter~3, Corollary 3.9]{CD98}.
In the form in which we apply piggybacking, the statement in  
Theorem~\ref{Thm:Piggyback}  will precisely meet our needs.

Let $\cat D$ denote  the category of distributive lattices with lattice homomorphisms as the morphisms and let $\two \in \cat D$
have universe $\{0,1\}$, where $0 < 1$.

\begin{thm}[Piggyback Strong Duality Theorem] \label{Thm:Piggyback}
Let $\M$ be a finite algebra having a reduct
 $\fnt{U}(\M)$ in $\cat D$.  Assume further that~$\M$ is such that every non-trivial subalgebra of~$\M$ is subdirectly irreducible.
Let $\MT = (M,G,H,K,R,\Tp)$   where $\Tp$ is the discrete topology and 
$G, H$ and $R$ are chosen to satisfy the following conditions, 
 for some subset $\Omega$ of 
$\cat D(\fnt{U}(\M),\two)$:
\begin{newlist}

\item[{\rm (i)}]  given $a \ne b$ in $M$ there exists $\omega \in \Omega$ and 
an endomorphism~$h$ which is a composite of finitely many maps in~$G$ such that $\omega(h(a)) \ne \omega (h(b))$;   
\item[{\rm (ii)}]  $R$ is the set of   subalgebras which are maximal, with respect to 
inclusion, in sublattices of 
$\fnt{U}(\M)^2$
of the form
$
(\omega,\omega')^{-1}(\leq) := 
   \{\, (a,b) \in M^2 \mid \omega(a) \leq \omega'(b)\, \}$,
where $\omega,\omega' $ range over $\Omega$;
\item[{\rm (iii)}]  $G \cup H $ is
 the 
monoid 
of partial endomorphisms of~$\M$;
\item[{\rm (iv)}] $K$ is the set of one-element subalgebras of $\M$ {\rm (}if any{\rm)}.
\end{newlist}
Then $\MT$ is an alter ego for $\M$ which strongly dualises $\ISP(\M)$.
\end{thm}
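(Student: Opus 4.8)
The plan is to deduce Theorem~\ref{Thm:Piggyback} from two results already available in the literature in black-box form: the distributive-lattice version of the Piggyback Duality Theorem \cite[Chapter~7, Theorem~2.1]{CD98} for the \emph{dualising} half, and the NU Strong Duality Theorem Corollary \cite[Chapter~3, Corollary~3.9]{CD98} for the \emph{strongness} half. The task is therefore essentially one of \emph{verification}: we must check that the alter ego $\MT$ described in the statement satisfies the hypotheses of each of these two theorems, and that the $\Omega$-based recipe for $R$ in (ii) and the separation condition in (i) match what the piggybacking machinery requires.

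First I would establish the dualising part. The hypotheses of \cite[Chapter~7, Theorem~2.1]{CD98} require: a finite algebra $\M$ with a bounded-distributive-lattice reduct (here $\fnt{U}(\M)$, which is automatically bounded since $\M$ is finite); a separating set $\Omega \subseteq \cat D(\fnt{U}(\M),\two)$ together with enough endomorphisms to witness separation --- this is exactly condition~(i); and the structure $R$ taken as in condition~(ii), namely the maximal subalgebras of $\M^2$ sitting inside the ``piggyback relations'' $(\omega,\omega')^{-1}(\leq)$. The set $G \cup H$ of \emph{all} partial endomorphisms is certainly an admissible choice of operations, since piggybacking permits adjoining all partial endomorphisms (and more is harmless for obtaining \emph{a} duality). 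The one-element subalgebras in $K$ are included to handle constants; if $\M$ has none, $K = \emptyset$ and this is vacuous. Thus $\MT$ as specified yields a duality on $\ISP(\M)$; this step is routine matching of hypotheses and I would present it as such.

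Next I would upgrade this to a strong duality. Here the relevant observation is that every finite algebra with a distributive lattice reduct lies in a congruence-distributive variety possessing a near-unanimity term (the lattice median $m(x,y,z) = (x\wedge y)\vee(y\wedge z)\vee(z\wedge x)$ is a $3$-ary NU term on the lattice reduct, hence on $\M$). The NU Strong Duality Theorem Corollary \cite[Chapter~3, Corollary~3.9]{CD98} then tells us that if $\M$ has an NU term and $\MT$ is an alter ego whose structure $G \cup H \cup R \cup K$ comprises \emph{all} the relevant algebraic operations and relations up to the appropriate arity --- in particular \emph{all} partial endomorphisms (condition (iii)), \emph{all} one-element subalgebras (condition (iv)), and at least the binary relations needed to dualise --- then $\MT$ strongly dualises $\ISP(\M)$. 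The subtle point to check is that the binary relations $R$ produced by the piggyback recipe in~(ii), together with the partial endomorphisms, suffice: because $\M$ has a $3$-ary NU term, strongness for $\ISP(\M)$ only requires binary relations (alongside all partial operations and constants), so no ternary or higher relations are needed, and the NU Strong Duality Corollary applies directly once we know $\MT$ dualises. The hypothesis that every non-trivial subalgebra of $\M$ is subdirectly irreducible is what guarantees, via the piggyback theorem's hypotheses, that the relations in (ii) genuinely separate enough to dualise; I would flag that this is precisely where Proposition~\ref{IrrIdx} (for our Sugihara algebras $\Zed_k$) will later be invoked.

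The main obstacle I anticipate is not conceptual but bookkeeping: one must be careful that the precise formulation of \cite[Chapter~7, Theorem~2.1]{CD98} asks for $\Omega$ to separate the points of $\M$ \emph{after applying endomorphisms} in exactly the way condition~(i) states, and that the ``maximal subalgebras inside $(\omega,\omega')^{-1}(\leq)$'' prescription for $R$ is literally the one in the cited theorem (as opposed to, say, all subalgebras of those relations). Provided the citations are quoted in the form in which they actually appear in \cite{CD98}, the proof reduces to the two-sentence remark that (i)--(iv) are the hypotheses of the piggyback duality theorem together with the NU strongness corollary, the NU term being the lattice median. I would therefore keep the proof short, citing \cite[Chapter~7, Theorem~2.1]{CD98} and \cite[Chapter~3, Corollary~3.9]{CD98} and noting the existence of the median NU term, rather than reproving either.
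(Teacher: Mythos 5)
Your plan coincides with what the paper actually does: Theorem~\ref{Thm:Piggyback} is not proved from first principles there either, but is quoted as the specialisation of the Piggyback Duality Theorem \cite[Chapter~7, Theorem~2.1]{CD98}, with conditions (i) and (ii) delivering the duality, combined with the NU Strong Duality Theorem Corollary \cite[Chapter~3, Corollary~3.9]{CD98}, with conditions (iii) and (iv) supplying strongness by brute force, the lattice median serving as the near-unanimity term. So as a strategy your proposal is the paper's own.

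There is, however, one piece of misplaced bookkeeping, and it concerns exactly the hypothesis you single out. You route the assumption that every non-trivial subalgebra of $\M$ is subdirectly irreducible to the \emph{duality} half, claiming it is what makes the relations in (ii) ``separate enough to dualise''. That is not where it is needed: conditions (i) and (ii) alone already satisfy the hypotheses of the piggyback duality theorem (no irreducibility assumption enters there), as the paper's remark following the theorem makes explicit. The subdirect-irreducibility hypothesis is consumed by the strongness citation. Correspondingly, your rendering of that step --- that for an algebra with a ternary NU term any dualising alter ego containing all partial endomorphisms and all one-element subalgebras is automatically strong --- is not the cited corollary and is false in general: the median guarantees that binary \emph{relations} suffice to yield a duality (the NU Duality Theorem), but strongness is a matter of having enough (partial) \emph{operations} in the alter ego, and unary partial operations together with constants need not suffice for an arbitrary lattice-based algebra. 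The assumption that all non-trivial subalgebras of $\M$ are subdirectly irreducible is precisely what makes this unary-plus-constants enrichment enough, and it is a hypothesis of \cite[Chapter~3, Corollary~3.9]{CD98}. Your conclusion is unaffected, since the hypothesis is available in the statement of Theorem~\ref{Thm:Piggyback} (and is later supplied for the Sugihara generators by Proposition~\ref{IrrIdx}), but in the verification it must be checked against the strongness corollary, not against \cite[Chapter~7, Theorem~2.1]{CD98}.
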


Conditions (i) and (ii) in Theorem~\ref{Thm:Piggyback} suffice to yield a duality.  Including conditions (iii) and (iv)  
is a brute force way 
to ensure that the duality is in fact strong.

In Section~\ref{Sec:TSM-SugOdd}  we shall  apply Theorem~\ref{Thm:Piggyback}  with $\M $ as $\Zed_{2n+1}$
($n \geq 0$) 
and in Section~\ref{Sec:TSM-SugEven} we apply the theorem with $\M$ as $\Zed_{2n}$ ($n \geq 1$).  
This enables us to identify a strongly dualising alter ego for any $\Zed_k$, 
in a uniform manner for the odd case and for the even case.
Before we embark on identifying the alter egos for these two families of dualities
 some comments about the conditions in the piggyback theorem should be made.  
In the even case, in which  $\End (\M) $ contains only the identity map, we need the elements of~$\Omega$ to separate the points of~$\M$.  
We shall see that for the odd case 
 a more economical choice  is available, courtesy of   
the endomorphism~$g$.

We shall reveal  a close connection between the graphs of the 
maps in condition (iii) 
 and the relations 
 in condition~(ii). 
This works to our advantage,  
 enabling us to 
streamline our alter egos.   
 To strip down an alter ego 
by removing superfluous  elements of $G \cup H \cup R$ 
 we rely on the notion of entailment,
as discussed in \cite[Chapter~2, Section~3]{CD98},  
together with an important technical result (see \cite[Chapter~3, Subsection~2.3]{CD98}
or \cite[Lemma 3.1]{CDquest}).
 We record 
a  simplified version.

\begin{lem}[$\MT$-Shift Strong Duality Lemma] \label{Lem:Shift} Assume that a finitely generated  quasivariety ${\CA =\ISP(\M)}$ is strongly dualised by  
$\MT = (M; G,H,K,R,\Tp)$.
Then 
${\MT' = (M; G', H',K,R',\Tp)}$ will also yield a strong duality if it is obtained from~$\MT$ by 
\begin{newlist}
\item[{\rm (a)}] enlarging $R$, and/or 
\item[{\rm (b)}]  deleting from $G\cup H$ any elements
 expressible  as compositions of   the elements that remain.
\end{newlist}
Moreover, if $\MT'$ yields a duality on $\CA$ and is obtained from $\MT$ by deleting members of~$R$ then $\MT'$ yields a strong duality on~$\CA$.
\end{lem}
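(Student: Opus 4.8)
\emph{The plan.} All three modifications are read off the entailment calculus for alter egos (\cite[Chapter~2, Section~3]{CD98}), together with the strong‑duality facts cited in the statement (\cite[Chapter~3, Subsection~2.3]{CD98} and \cite[Lemma~3.1]{CDquest}). Clause (b) is the easy one. If $h\in G\cup H$ is a composite of members of $G\cup H$ that are retained in $\MT'$, then on every $\X\in\IScP(\MT)$ the pointwise lift $h^{\X}$ is the corresponding composite of the lifts of those members — composition commutes with pointwise lifting, with the domains of partial operations tracked exactly as in the remarks following Definition~\ref{def:alterego}. Hence $h$ is carried by every substructure, and preserved by every morphism, compatible with the retained maps; so $\IScP(\MT)$ and $\IScP(\MT')$ coincide as categories, the functors $\D$, $\E$ and the evaluation maps $e$, $\epsilon$ are literally unchanged, and in particular injectivity of the alter ego in its own topological quasivariety is the same statement for $\MT$ and for~$\MT'$. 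Thus $\MT'$ strongly dualises~$\CA$.

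\emph{Enlarging $R$.} Here the adjoined binary relations are algebraic. That $\MT'$ still dualises is routine: $\D'(\A)=\CA(\A,\M)$ has the same underlying space, topology and $G$‑, $H$‑, $K$‑structure as $\D(\A)$, since homomorphisms respect pointwise‑lifted algebraic relations; so a morphism $\D'(\A)\to\MT'$ in $\IScP(\MT')$, read in the smaller type, is a morphism $\D(\A)\to\MT$ in $\IScP(\MT)$, hence an evaluation because $\MT$ dualises, so $e'_{\A}$ is onto. That enriching a strongly dualising alter ego by algebraic relations (or, more generally, by algebraic partial operations) again yields a strongly dualising one is precisely the technical result cited above.

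\emph{Deleting relations, duality retained.} This is the substantive clause. The organising idea: since $\MT$ strongly dualises it is full, so the forgetful functor $\IScP(\MT)\to\IScP(\MT')$ identifies $\IScP(\MT)$ with the reflective subcategory $\CA^{\mathrm{op}}$ of $\IScP(\MT')$, the reflector being $\D'\E'$ and the unit $\epsilon'$. For an object lying in this subcategory — such as $\MT'=\D'(\F_{\CA}(1))$, on which $\epsilon'$ is an isomorphism by the triangle identities — injectivity in $\IScP(\MT')$ with respect to embeddings follows from two things: (a) injectivity within $\CA^{\mathrm{op}}$, which is exactly the fact that $\MT$ is injective in $\IScP(\MT)$, i.e.\ that $\MT$ is strong; and (b) the reflector $\D'\E'$ carrying embeddings to embeddings. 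Since $\D'$ always carries surjections to embeddings, (b) amounts to $\E'$ carrying every embedding of $\IScP(\MT')$ to a surjection; granting (b), the standard reflection argument (factor $f\colon\X'_1\to\MT'$ through $\epsilon'_{\X'_1}$, extend across $\D'\E'(m)$ inside $\CA^{\mathrm{op}}$ using the strongness of $\MT$, then compose with $\epsilon'_{\X'_2}$ using naturality of $\epsilon'$) shows $\MT'$ is injective in $\IScP(\MT')$, and as $\MT'$ already dualises it strongly dualises~$\CA$. The guiding heuristic behind (b) is that, once a duality is present, strongness of $\M$ over a distributive‑lattice base is controlled by the partial endomorphisms and the one‑element subalgebras, not by the relational part of the alter ego — which is exactly why clauses (iii) and (iv) of the Piggyback Strong Duality Theorem~\ref{Thm:Piggyback} mention only those; the relations in $R\setminus R'$ that $\MT'$ drops are, because $\MT'$ still dualises~$\CA$, recovered by $\MT'$ in precisely the sense the reflection argument requires.

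\emph{The obstacle.} The single non‑formal point is~(b). A bare $\IScP(\MT')$‑embedding need neither preserve nor reflect the deleted relations, so one cannot simply transport an injectivity diagram up to $\IScP(\MT)$ along the forgetful functor; attempting to repair this by re‑decorating the structures with those relations runs into a circularity, since recognising the re‑decorated map as an $\IScP(\MT)$‑embedding would itself use the injectivity one is trying to establish. Hence the indirect, reflection‑theoretic route above, whose crux — that pruning relations from a strongly dualising alter ego cannot cost strongness provided a duality survives — is exactly the content of the strong‑duality machinery of \cite[Chapter~3]{CD98} and \cite{CDquest} on which the lemma rests. The only other thing to watch, present throughout whenever $H\neq\emptyset$, is the domain bookkeeping that partial operations impose on morphisms; but that is already built into the definition of $\IScP$ and introduces no new difficulty.
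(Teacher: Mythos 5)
The paper does not prove this lemma at all: it records it explicitly as ``a simplified version'' of an external technical result, citing \cite[Chapter~2, Section~3]{CD98}, \cite[Chapter~3, Subsection~2.3]{CD98} and \cite[Lemma 3.1]{CDquest}, and treats the natural-duality machinery in black-box fashion. So there is no in-paper argument to compare yours against; the only question is whether your proposal is a self-contained proof, and it is not. Your treatment of clause (b) (deleting composites from $G\cup H$) is fine, as is the duality-retention half of clause (a): these are genuine, elementary entailment arguments. But the two substantive strongness claims are not established. For enlarging $R$ you simply defer to ``the technical result cited above'', i.e.\ to the same references the paper quotes, which proves nothing new.

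The decisive clause --- that deleting relations preserves strongness provided a duality survives --- is where the genuine gap lies, and your own text concedes it. Your reflection-theoretic route hinges on hypothesis (b): that $\E'$ carries every embedding of $\IScP(\MT')$ to a surjection. But given that $\MT'$ yields a duality, this hypothesis is \emph{equivalent} to the conclusion you want: $\E'$ converting embeddings to surjections is exactly injectivity of $\MT'$ in $\IScP(\MT')$ (an embedding $\varphi\colon\X\to\Y$ and a morphism $x\colon\X\to\MT'$ extend along $\varphi$ precisely when $\E'(\varphi)$ is onto), which is the paper's definition of strongness. So ``granting (b), the standard reflection argument shows $\MT'$ is injective'' is circular, and the only support you offer for (b) is a heuristic about strongness being ``controlled by the partial endomorphisms'' plus an appeal to the very machinery of \cite[Chapter~3]{CD98} and \cite{CDquest} whose content this clause is. A correct proof has to engage with that machinery concretely --- e.g.\ via the characterisation of strongness in terms of term-closed (hom-closed) substructures, showing that term-closedness is insensitive to the relational part of the type once the duality is in place --- rather than restating the claim as an unproved lemma about $\E'$.
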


In the statement of Theorem~\ref{Thm:Piggyback} the restriction in (ii) to subalgebras which are \emph{maximal} is customary, and this provides a device
for simplifying an alter ego from the outset.  However this restriction is optional.
This is shown by examination of the proof of the theorem  (or more circuitously
by an appeal to Lemma~\ref{Lem:Shift}).   
(Any duality has an alternative alter ego in which operations and partial operations
are replaced by their graphs.  But we warn that this process may destroy strongness, 
so it is  not relevant to our study.)

\section{Strong duality:  odd case}\label{Sec:Duality-SugOdd}

In this section we  establish the  strong duality for $\SA_{2n+1}$  which we shall use in our application of the Test Spaces Method.  
We wish to apply Theorem~\ref{Thm:Piggyback} with~$\M$ as $\Zed_{2n+1}$.
We still have some work to do 
to identify the alter ego we want.

Define lattice homomorphisms  
$\delp 
$ and~$\delm 
$ from 
$ \fnt{U} (\Zed_{2n+1})$ to $\two$ by
\[
\delp 
(a) = 1 \Leftrightarrow  a \geq   1  \quad  \text{and} \quad 
\delm 
 (a) = 1 \Leftrightarrow a \geq 0.
\]
Note that  $\delm (a) = 1- \delp(\neg a)$, so that the apparent 
asymmetry in the definition, caused by the presence of~$0$, is illusory

\begin{lem}\label{lem:SepOdd}
Given $a \ne b$ in $\Zed_{2n+1}$ there exists 
an endomorphism~$h$ such that either 
$\delp 
(h(a)) \ne \delp 
(h(b)) $ or 
$\delm 
(h(a)) \ne 
\delm 
(h(b))$. 
\end{lem}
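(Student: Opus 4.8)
```latex
\noindent\textbf{Proof proposal.}
The plan is to reduce to the case where $a$ and $b$ lie strictly on the same
side of~$0$, where the endomorphism~$g$ (which contracts each half of the chain
towards~$0$) can be used to move the relevant pair down to small absolute values,
at which point $\delp$ and $\delm$ separate them directly.

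First I would dispose of the trivial configurations. If $a$ and $b$ have
different signs in the sense that one of them is~$\geq 0$ while the other
is~$<0$, then already $\delm(a)\ne\delm(b)$ with $h=\id$. So we may assume
$a,b$ are both $\geq 0$ or both $\leq 0$; by applying~$\neg$ if necessary (note
$\neg$ is not an endomorphism, so instead I would argue symmetrically using the
identity $\delm(x)=1-\delp(\neg x)$, or simply treat the two sign-cases in
parallel) we may assume $0 \leq a < b \leq n$ after relabelling. Since $a\ne b$
and both are $\geq 0$, the pair is ``order-separated'' away from~$0$ by a gap.

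Next, the key observation: $g$ fixes~$0$ and satisfies $g(c)=c-1$ for $c>0$, so
$g^{b-1}$ maps the interval $\{0,1,\dots,b\}$ onto $\{0,1\}$ with $g^{b-1}(c)=0$
for $0\leq c\leq b-1$ and $g^{b-1}(b)=1$. Taking $h=g^{b-1}$ (an endomorphism,
being a composite of the endomorphism~$g$ with itself) we get $h(a)=0$ since
$a\leq b-1$, while $h(b)=1$. Then $\delp(h(a))=\delp(0)=0$ but
$\delp(h(b))=\delp(1)=1$, so $\delp(h(a))\ne\delp(h(b))$, as required. The
mirror-image argument, using $g(c)=c+1$ for $c<0$ and the functional $\delm$
(which detects $\geq 0$), handles the case $0 \geq a > b \geq -n$, where
$h=g^{|b|-1}$ sends~$b$ to~$-1$ and~$a$ to~$0$, and $\delm(-1)=0\ne 1=\delm(0)$.

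I do not anticipate a serious obstacle here: the only point requiring a little
care is the bookkeeping of the sign-cases and making sure that in each case the
map chosen really is a composite of elements of~$G$ (hence an endomorphism in
the sense of the piggyback theorem) and that the exponent $b-1$ (resp.\ $|b|-1$)
is nonnegative, which holds because we reduced to $a\ne b$ with both on one side
of~$0$, forcing $|b|\geq 1$. The remark following the definition of $\delp,\delm$
in the excerpt, namely $\delm(x)=1-\delp(\neg x)$, is exactly what lets us present
the positive-side and negative-side arguments as genuine mirror images rather than
re-deriving everything twice.
```
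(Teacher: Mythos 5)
Your proposal is correct and follows essentially the same route as the paper: dispose of the opposite-sign case with $h=\id$, then apply a power of~$g$ to send the element of smaller modulus to~$0$ while the other stays nonzero on the same side, so that $\delp$ (positive side) or $\delm$ (negative side) separates the images. The only cosmetic difference is your choice of exponent ($g^{b-1}$, landing the pair on $\{0,1\}$, resp.\ $\{-1,0\}$) versus the paper's $g^{a}$ (resp.\ $g^{-b}$), which is immaterial.
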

\begin{proof}
If $a< 0\leq b$, then $\delm 
(a)\neq \delm 
(b)$. Similarly, if $a\leq 0< b$, then $\delp
(a)\neq \delp 
(b)$.
Assume now that $0\leq a< b$. Then $\delp  
(g^{a}(a))\neq\delp 
(g^{a}(b))$.  Likewise,  if $a<b\leq 0$,  then $\delm 
(g^{-b}(a))\neq \delm 
(g^{-b}(b))$.
\end{proof}

We next 
investigate the relations in Theorem~\ref{Thm:Piggyback}(ii)
associated with the choice $\Omega = \{\delp 
,\delm 
\}$.  
 Let   $\w,\w' \in \Omega$.  
For each choice of $\w,\w'$ 
 we wish  to describe  the subalgebras~$S$ of $\Zed_{2n+1}^2 $
for which
\[ 
S \subseteq (\w,\w')^{-1} (\leq) = \{\, (a,b) \in \Zed_{2n+1}^2 \mid  \w(a)
\leq \w'(b)\, \}.
\]   
Necessarily, $S$ is disjoint from the rectangle $\w^{-1} (1) \times
\w'^{-1}(0)$.    Further constraints arise  because $S$ has to be closed
under~$\neg$ and also  under the implication $\to$. 
Proposition~\ref{prop:PrelOdd} identifies the permitted subalgebras for the four
possible cases.  
We illustrate the proofs in   
Figures
\ref{Fig:Beta}  and~\ref{Fig:AlphaBeta-etc}.
In the diagrams 
 the shaded regions indicate exclusion zones. For $Q \subseteq \Zed_{2n+1}^2$ we write 
$\{  \, \neg q \mid q \in Q\,\}$ as $\neg Q$.  
Our starting  point is Figure~\ref{Fig:Beta}(a), in which we include labels for  points of reference on the axes.
These labels are suppressed on subsequent diagrams.  
 We denote the converse of a binary relation~$r$  by $r^\smallsmile$.

\begin{prop}\label{prop:PrelOdd}
Let 
$S$ be a subalgebra of  $\Zed_{2n+1}^{2}$. 
\begin{newlist}
\item[{\rm (i)}] If $S\subseteq (\delm,\delm) 
^{-1}(\leq)$
 then there exists  $e\in \PEZ {2n+1} $ such that $S= \graph e$.
\item[{\rm (ii)}] If $S\subseteq     (\delp, \delp)
^{-1}(\leq)$  
then there exists  $e\in \PEZ {2n+1}$ such that $S^{\smallsmile}= \graph e$.
 \item[{\rm (iii)}] If $S\subseteq (\delm 
,\delp 
)^{-1}(\leq)$  then there exists  $e\in \PEZ {2n+1}$ such that $0\notin \im e \cup \dom e$ and  $S= \graph e$.
\item[{\rm (iv)}]  If $S \subseteq (\delp 
,\delm 
)^{-1}(\leq)$  then
$S$ or $S^\smallsmile$ is the graph of a partial endomorphism.
\end{newlist}
\end{prop}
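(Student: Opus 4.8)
The plan is to get by with only a few elementary facts about implication in $\Zed_{2n+1}$, all immediate from the defining case-split: $a\to a=|a|$, so that $\delm(a\to a)=1$ always while $\delp(a\to a)=1$ precisely when $a\neq 0$; $a\to 0=\neg a$ and $0\to a=a$; and, the workhorse, $b<c$ implies $c\to b\leq -1$, i.e.\ $\delm(c\to b)=0$. I will also use that $S$, being closed under $\neg$, avoids not only the ``forbidden rectangle'' determined by $S\subseteq(\w,\w')^{-1}(\leq)$ but also its reflection under $\neg$; I will call the union of the two the \emph{exclusion zone} (this is exactly the bookkeeping the figures in the statement record).

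For (i) the exclusion zone is $\{a\geq 0,\ b\leq -1\}\cup\{a\leq 0,\ b\geq 1\}$. If $(a,b),(a,c)\in S$ with $b<c$, then $(a,c)\to(a,b)=(|a|,\,c\to b)$ is a point of $S$ with first coordinate $\geq 0$ and second coordinate $\leq -1$, which is forbidden. So $S$ is the graph of a partial map and, being a subalgebra of $\Zed_{2n+1}^2$, equals $\graph e$ for some $e\in\PEZ{2n+1}$. Part (ii) is the horizontal mirror image: its exclusion zone is $\{a\geq 1,\ b\leq 0\}\cup\{a\leq -1,\ b\geq 0\}$, and if $(a,d),(b,d)\in S$ with $a<b$ then $(b,d)\to(a,d)=(b\to a,\,|d|)\in S$ has first coordinate $\leq -1$ and second coordinate $\geq 0$, again forbidden; hence $S^{\smallsmile}=\graph e$ for some $e\in\PEZ{2n+1}$. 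For (iii) the zone is $\{a\geq 0,\ b\leq 0\}\cup\{a\leq 0,\ b\geq 0\}$: it contains the zone of (i), so the argument of (i) gives $S=\graph e$, and it contains every point having a zero coordinate, so $\dom e$ and $\im e$ omit $0$.

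The substantive case is (iv), where the exclusion zone is only $\{a\geq 1,\ b\leq -1\}\cup\{a\leq -1,\ b\geq 1\}$ --- a pair with a zero coordinate is now permitted, which is precisely why $S$ itself need not be a graph. First I would show any vertical branching of $S$ sits over the line $a=0$: if $(a,b),(a,c)\in S$ with $a\neq 0$ and $b<c$, then $(|a|,c\to b)\in S$ has first coordinate $\geq 1$ and second $\leq -1$, forbidden; symmetrically, any horizontal branching of $S$ sits over $b=0$. Now suppose, for contradiction, that neither $S$ nor $S^{\smallsmile}$ is a graph. Then there are $(0,b_1),(0,b_2)\in S$ with $b_1\neq b_2$ and $(c_1,0),(c_2,0)\in S$ with $c_1\neq c_2$; since one of $b_1,b_2$ and one of $c_1,c_2$ is nonzero, closure under $\neg$ yields $(0,b)\in S$ with $b>0$ and $(c,0)\in S$ with $c>0$. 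Then
\[
(0,b)\to(c,0)=(0\to c,\ b\to 0)=(c,\neg b)=(c,-b)\in S
\]
has first coordinate $c\geq 1$ and second coordinate $-b\leq -1$, so it lies in the exclusion zone --- a contradiction. Hence $S$ or $S^{\smallsmile}$ is a graph, and therefore the graph of a partial endomorphism.

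The only step requiring an idea rather than bookkeeping is the last one: observing that once $S$ branches both vertically (necessarily over $a=0$) and horizontally (necessarily over $b=0$), the single composite $(0,b)\to(c,0)=(c,-b)$ falls into the forbidden zone. Everything else --- pinning down the four exclusion zones and extracting the contradictions in (i)--(iii) and in the ``branching lives over $0$'' step --- is routine given the short list of identities for $\to$ recorded at the outset, which is presumably why the authors carry out the analysis with pictures.
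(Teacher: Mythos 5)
Your proof is correct and follows essentially the same route as the paper: exploit closure under $\neg$ to cut down to the regions shown in the paper's figures, then use closure under $\to$ to manufacture a point in the forbidden rectangle, with the decisive computation in (iv) (a pair $(c,0),(0,b)$ with $b,c>0$ forcing $(c,-b)\in S$) matching the paper's $(a\vee b)\to a=(x,-y)$. The only differences are organizational — you prove (ii) directly rather than by passing to $S^{\smallsmile}$ and invoking (i), and you prove (iv) via the "branching only over a zero coordinate" observation rather than first showing $S\subseteq(\delp,\delp)^{-1}(\leq)$ or $S\subseteq(\delm,\delm)^{-1}(\leq)$ — but the underlying calculations coincide.
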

\begin{proof} 
We adopt  the notation 
$[i,j]$ for  the set of elements~$k$ for which $i \leq k \leq j$  ($i,j,k 
\in \Zed_{2n+1}$).

Consider (i).
 Since $S$ is closed under $\neg$ and $\neg$ is an involution,  
\[
S\subseteq (\delm,\delm) 
^{-1}(\leq) \cap \neg  
  (\delm,\delm) 
^{-1}(\leq)
=[-n,-1]\times [-n,0]\cup\{(0,0)\}\cup [1,n]\times [0,n]
\] 
(see Figure~\ref{Fig:Beta}, (a) and~(b)).

\begin{figure}[h]
\begin{center}

\begin{tikzpicture}[scale=0.76]
\begin{scope}[xshift=-9.5cm]
\node [label=right:{\small  $(\delm,\delm)  
^{-1}(\leq)$}]   (b) at  (-3,2.7) {};
\node (ba) at  (-2.7,2.7) {};
\node  (bb) at  (-4.2,1) {};
\draw[thin] (bb) .. controls +(right:1cm) and +(left:1cm) .. (ba);

\node (a2) at  (-5.5,2) {\scriptsize $-$};
\node (a1) at  (-5.5,0.23) {\scriptsize  $-$};
\node  (a0) at  (-5.5,0) {\scriptsize $-$};
\node  (a-1) at  (-5.5,-0.23) {\scriptsize  $-$};
\node  (a-2) at  (-5.5,-2) {\scriptsize $-$};

\draw[thin] (-5.5,2.2)--(-5.5,-2.5);

\node  (b2) at  (-1,-2.5) {\scriptsize $|$};
\node  (b1) at  (-2.77,-2.5) {\scriptsize $|$}; 
\node  (b0) at  (-3,-2.5) {\scriptsize $|$};
\node  (b-2) at  (-3.23,-2.5) {\scriptsize$|$};  
\node  (b-3) at  (-5,-2.5) {\scriptsize $|$};

\draw[thin] (-5.5,-2.5)--(-0.7,-2.5);

\draw[thin, dashed] (-5.2,0)--(-.7,0);     
\draw[thin, dashed] (-3,-2.4)--(-3,2.2);   
  \draw[pattern color=gray,pattern=north east lines] (-3.13,-2.2) rectangle (-0.8,-0.13);

\draw (-5.2,-2.2) rectangle (-0.8,2.2);  

\node [label=below:{\small  (a)}]   (capa) at  (-3,-3.5) {};

\begin{scope}[xshift=1.5mm]
\node [label=left:{\scriptsize  $n$}]   (la2) at  (-5.5,2) {}; 
\node [yshift=.6mm, label=left:{\scriptsize $1$}]   (la1) at  (-5.5,0.26) {};  
\node [label=left:{\scriptsize  $0$}]   (la0) at  (-5.5,0) {}; 
\node [yshift=-.5mm, label=left:{\scriptsize  $-1$}]   (la-1) at  (-5.5,-0.26) {}; 
\node [label=left:{\scriptsize  $-n$}]   (la-2) at  (-5.5,-2) {}; 
\end{scope}

\begin{scope}[yshift=1mm]%

\node [label=below:{\scriptsize  $n$}]   (a2) at  (-1,-2.5) {}; 
\node [label=below:{\scriptsize  $1$}]   (a2) at  (-2.8,-2.5){}; 
\node [label=below:{\scriptsize  $0$\ }]   (a0) at  (-3,-2.5) {}; 
\node [label=below:{\scriptsize  $-1$\ \ \ \ }]   (a2) at  (-3.2,-2.5) {}; 
\node [label=below:{\scriptsize  $-n$}]   (a-3) at  (-5,-2.5) {};  
\end{scope}

\end{scope}


\begin{scope}[xshift=-8cm]
\node [label=left:{\small 
\begin{tabular}{lr}
 $(\delm,\delm)
^{-1}(\leq)\cap$\\[-.5ex] 
\ \ \ \ $ \neg (\delm,\delm) 
^{-1}(\leq)$
\end{tabular}
}]   (lb1) at  (4.8,2.9) {};
\node  (bs1) at  (4.5,2.7) {};
\node   (bb1) at  (4.5,1) {};
\draw[thin] (bb1) .. controls +(left:1cm) and +(right:1cm) .. (bs1);

\node  (a2) at  (0.5,2) {\scriptsize $-$};
\node   (a1) at  (0.5,0.23) {\scriptsize $-$};
\node    (a0) at  (0.5,0) {\scriptsize$-$};
\node  (a-1) at  (0.5,-0.23) {\scriptsize$-$};
\node   (a-2) at  (0.5,-2) {\scriptsize$-$};

\draw[thin] (0.5,2.2)--(0.5,-2.5);

\node (b2) at  (1,-2.5) {\scriptsize $|$};
\node (b1) at  (3.23,-2.5) {\scriptsize $|$};
\node (b0) at  (3,-2.5) {\scriptsize $|$};
\node (b-1) at  (2.77,-2.5) {\scriptsize $|$};
\node (b-2) at  (5,-2.5) {\scriptsize$|$};

\draw[thin] (0.5,-2.5)--(5.2,-2.5);
\draw[thin, dashed] (0.7,0)--(5.2,0);
\draw[thin, dashed] (3,2.2)--(3,-2.5);

\draw[pattern color=gray,pattern=north east lines] (2.87,-2.2) rectangle (5.2,-0.13);   
\draw[pattern color=gray,pattern=north east lines] (0.8,0.13) rectangle (3.13,2.2);
\draw (0.8,-2.2) rectangle (5.2,2.2);
\node [label=below:{\small  (b)}]   (capb) at  (3,-3.5) {};
\end{scope}


\begin{scope} [xshift=5cm]

\begin{scope}[xshift = -6cm]
\draw[pattern color=gray,pattern=north east lines] (2.87,-2.2) rectangle (5.2,-0.13);   
\draw[pattern color=gray,pattern=north east lines] (0.8,0.13) rectangle (3.13,2.2);
\draw (0.8,-2.2) rectangle (5.2,2.2);
\end{scope} 
\node (a2) at  (-5.5,2) {\scriptsize$-$};
\node (a0) at  (-5.5,0) {\scriptsize$-$};
\node (a-3) at  (-5.5,-2) {\scriptsize$-$};

\draw[thin] (-5.5,2.2)--(-5.5,-2.5);

\node (a2) at  (-1,-2.5) {\scriptsize$|$};
\node (a0) at  (-3,-2.5) {\scriptsize$|$};
\node (a-3) at  (-5,-2.5) {\scriptsize$|$};
\node (x) at  (-1.5,-2.5) {\scriptsize$|$};

\draw[thin] (-5.5,-2.5)--(-0.7,-2.5);

\draw[thin, dashed] (-5.2,0)--(-.7,0);
\draw[thin, dashed] (-3,-2.5)--(-3,2.2);

\node  (a) at  (-1.5,1) {};
\node  (b) at  (-1.5,1.5) {};
\node  (a-b) at  (-1.5,-1.5) {};

\draw[ fill=white]  (a) circle [radius=2pt];
\draw[fill=white]  (b) circle [radius=2pt];
\draw[fill=white] (a-b) circle [radius=2pt];
\draw [dotted, shorten <=-2pt, shorten >=-2pt] (a) -- (b);
\draw [dotted, shorten <=-2pt, shorten >=-2pt] (a-b) -- (a);
\draw [dotted, shorten <=-2pt, shorten >=-2pt] (a-b) -- (x);

\node (y) at  (-5.5,1.5) {$-$};
\node  (-y) at  (-5.5,-1.5) {$-$};
\draw [shorten <=-2pt, shorten >=-2pt,dotted] (y) -- (b);
\draw [shorten <=-2pt, shorten >=-2pt,dotted] (-y) -- (a-b);

\begin{scope}[xshift=1mm]
\node [label=left:{\small  $y$}]  
 (y) at  (-5.5,1.5) {}; 
\node [label=left:{\small  $-y$}]  
 (-y) at  (-5.5,-1.5) {};

\end{scope}

\begin{scope}[xshift=-1mm, yshift=6.5mm] 
\node [label=below left: {\small  $b$}]  
 (a) at  (-1.3,1) {};
\node [label= below  left 
: {\small  $a$}] 
  (b) at  (-1.3,1.5) {};
\node [label= below left 
:{\small  $a\to b$}]  
 (a-b) at  (-1.3,-1.3) {};
\end{scope}

\begin{scope}[yshift=1mm] 
\node [label=below:{\small  $x$}]
   (x) at  (-1.5,-2.5) {}; 
\end{scope}

\node [label=below:{\small  (c)}]  (capc) at  (-3,-3.5) {};

\end{scope} 

\end{tikzpicture}
\end{center}
\caption{Illustration of proof of Proposition~\ref{prop:PrelOdd}(i)
}\label{Fig:Beta}
\end{figure}


 Let  $a = (x,y) \in S$ where~$x$ is fixed.  We wish to show that $y$ is 
unique. 
If $x=0$  both  $(0,y)$ and $(0,-y)$ belong to $S\subseteq 
(\delm,\delm) 
^{-1}(\leq)$. 
Then $\delm 
(y) = \delm 
(-y) = 0$ and hence $y=0$.
We may now assume without loss of generality that $x > 0$
(otherwise we can consider instead $\neg a = (-x, -y)$, which also belongs to~$S$). 
 Suppose for contradiction that there exists  $y'\neq y$  such that $b=(x,y')\in S$. Since $\delm 
(x)=1$,  we know that$y,y'\in {\delm}
^{-1}(1)=[0,n]$.
If $0\leq y'< y$  then
\[
a\to b = (x\to x, y\to y')=(x,-y\wedge y')=(x,-y).
\]
Since $-y<0$ we must have   $\delm 
(-y)=0\ngeqslant \delm 
(x)$.  But 
this is incompatible with closure of $S$ under implication and $S$ being 
contained in $  (\delm,\delm) 
^{-1}(\leq)$ (see Figure~\ref{Fig:Beta}(c)).
If $y<y$', a similar argument considering   $b\to a$
leads to a contradiction.  
Therefore $S$ is the graph of a (possibly partial) map $e$ from $\Z_{2n+1}$ to $\Z_{2n+1}$. Since~$S$ is a subalgebra of $\Z_{2n+1}^2$, it follows that $e$
is a partial endomorphism.

We now prove (ii).
First observe that 
\[
\bigl((\delp,\delp)   
^{-1}(\leq)\cap\neg 
(\delp,\delp 
)^{-1}(\leq) 
\bigr) ^{\smallsmile} =(\delm,\delm) 
^{-1}(\leq)\cap\neg 
 (\delm,\delm) 
^{-1}(\leq) 
 \]
 (see Figures~\ref{Fig:Beta}(b) and~\ref{Fig:AlphaBeta-etc}(a)).
Hence,
$S\subseteq (\delp,\delp) 
^{-1}(\leq)$  implies $S ^{\smallsmile}
\subseteq (\delm,\delm)  
^{-1}(\leq)$. Now (ii) follows from (i).

\begin{figure}[h]
\begin{center}
	\begin{tikzpicture}[scale=.76]
\begin{scope}[xshift=-12cm]
\node [label=left:{\small 
\begin{tabular}{lr}
$(\delp 
,\delp   
)^{-1}(\leq) \ \cap$\\[-.5ex]
\ \  $ \neg(\delp
,\delp
)^{-1}(\leq)$
\end{tabular}
}]   (lab) at  (-1.3,3.1) {};
\node []   (base) at  (-1.7,3.1) {};
\node []   (bs) at  (-1.7,1) {};

\node (a2) at  (-5.5,2) {\scriptsize $-$};
\node (a2) at  (-5.5,0.23) {\scriptsize $-$};
\node (a0) at  (-5.5,0) {\scriptsize $-$};
\node (a2) at  (-5.5,-0.23) {\scriptsize $-$};
\node (a-3) at  (-5.5,-2) {\scriptsize $-$};

\draw[thin] (-5.5,2.2)--(-5.5,-2.5);

\node (a2) at  (-1,-2.5) {\scriptsize$|$};
\node (a2) at  (-2.77,-2.5) {\scriptsize$|$};
\node (a0) at  (-3,-2.5) {\scriptsize$|$};
\node (a2) at  (-3.23,-2.5) {\scriptsize$|$};
\node (a-3) at  (-5,-2.5) {\scriptsize$|$};

\draw[thin] (-5.5,-2.5)--(-0.7,-2.5);

\draw[thin] (bs) .. controls +(left:1cm) and +(right:1cm) .. (base);

\draw[thin, dashed] (-5.2,0)--(-.7,0);
\draw[thin, dashed] (-3,-2.4)--(-3,2.2);
\draw[pattern color=gray,pattern=north east lines] (-2.87,-2.2) rectangle (-0.8,0.13);  
\draw[pattern color=gray,pattern=north east lines] (-5.2,2.2) rectangle (-3.13,-0.13);
\draw (-5.2,-2.2) rectangle (-0.8,2.2);

\node [label=below:{\small  (a)}]   (capa) at  (-3,-3) {};    
\end{scope}

\begin{scope}[xshift=-10cm]
\node [label=left:{\small  
\begin{tabular}{lr}
$(\delm
,\delp 
)^{-1}(\leq) \ \cap$\\[-.5ex]
\ \  $ \neg(\delm
,\delp
)^{-1}(\leq)$
\end{tabular}
}]   (b1) at  (5.2, 3.1) {};  
\node []   (bs1) at  (4.8, 3.1) {};
\node []   (bb1) at  (4.5,1) {};
\node []   (b2) at  (1.2,3.1) {};
\node []   (bb2) at  (1.5,-1.2) {};
\draw[thin] (bb1) .. controls +(left:1cm) and +(right:1cm) .. (bs1);
\draw[thin] (bb2) .. controls +(right:1cm) and +(left:1cm) .. (b2);

\node (a2) at  (0.5,2) {\scriptsize $-$};
\node  (a2) at  (0.5,0.23) {\scriptsize$-$};
\node  (a0) at  (0.5,0) {\scriptsize$-$};
\node  (a2) at  (0.5,-0.23) {\scriptsize$-$};
\node  (a-3) at  (0.5,-2) {\scriptsize$-$};

\draw[thin] (0.5,2.2)--(0.5,-2.5);

\node 
 (a2) at  (1,-2.5) {\scriptsize$|$};
\node 
  (a2) at  (3.2,-2.5) {\scriptsize$|$};
\node 
  (a0) at  (3,-2.5) {\scriptsize$|$};
\node 
   (a2) at  (2.8,-2.5) {\scriptsize$|$};
\node 
  (a-3) at  (5,-2.5) {\scriptsize$|$};

\draw[thin] (0.5,-2.5)--(5.2,-2.5);

\draw[thin, dashed] (0.7,0)--(5.2,0);
\draw[thin, dashed] (3,2.2)--(3,-2.5);
\draw (0.8,-2.2) rectangle (5.2,2.2);
\node [label=below:{\small  (b)}]   (capb) at  (3,-3) {};

\draw[pattern color=gray,pattern=north east lines] (0.8,-0.13) -- (2.87,-0.13) -- (2.87,-2.2) -- 
(5.2,-2.2) -- (5.2,0.13) -- (3.13,0.13) -- (3.13,2.2) -- (0.8,2.2) -- (0.8,-0.13) ;
\end{scope} 


\begin{scope}[xshift= -2cm]
\node [label=left:{\small  
\begin{tabular}{lr}
$(\delp
,\delm ) 
^{-1}(\leq) \ \cap$\\[-.5ex]
\ \  $ \neg(\delp 
,\delm 
)^{-1}(\leq)$
\end{tabular}
}]
(b1) at  (5.2,3.1) {};  
\node []   (bs1) at  (1.2,3.1) {};
\node []   (bb1) at  (1.5,-0.5) {};
\draw[thin] (bb1) .. controls +(right:1cm) and +(left:1cm) .. (bs1);

\node 
 (a2) at  (0.5,2) {\scriptsize$-$};
\node 
  (a2) at  (0.5,0.23) {\scriptsize$-$};
\node 
 (a0) at  (0.5,0) {\scriptsize$-$};
\node 
   (a2) at  (0.5,-0.23) {$-$};
\node 
   (a-3) at  (0.5,-2) {\scriptsize$-$};

\draw[thin] (0.5,2.2)--(0.5,-2.5);

\node 
  (a2) at  (1,-2.5) {\scriptsize$|$};
\node  
 (a2) at  (3.2,-2.5) {\scriptsize$|$};
\node 
  (a0) at  (3,-2.5) {\scriptsize$|$};
\node 
  (a2) at  (2.8,-2.5) {\scriptsize$|$};
\node 
   (a-3) at  (5,-2.5) {\scriptsize$|$};

\node [label=below:{\small  (c)}]   (capc) at  (3,-3) {};

\draw[thin] (0.5,-2.5)--(5.2,-2.5);

\draw[thin, dashed] (0.7,0)--(5.2,0);
\draw[thin, dashed] (3,2.2)--(3,-2.5);
\draw[pattern color=gray,pattern=north east lines] (3.13,-2.2) rectangle (5.2,-0.13);
\draw[pattern color=gray,pattern=north east lines] (0.83,0.13) rectangle (2.9,2.2);
\draw (0.83,-2.2) rectangle (5.2,2.2);

\node 
 (a) at  (3,1.5) {};
\node 
 (b) at  (4,0) {};
\node 
 (ab) at  (4,1.5) {};

\node 
 (x) at  (4,-2.5) {\scriptsize$|$};
\node 
  (y) at  (0.5,1.5) {\scriptsize$-$};
\node 
 (-y) at  (0.5,-1.5) {\scriptsize$-$};

\node    
(al) at  (3.23,1.76) {\small $a$ };
\node  
 (abl) at  (4.55,1.82) {\small  $a\vee b$};

\node 
  (a-b) at  (4,-1.5) {};
\node  (a-bl) at (2.8,-1.2)  {\small  $(a\vee b)\to b$};

\node  (bl)  at   (4.2,.3) {\small $b$};

\begin{scope}[yshift=1mm]
\node [label=below:{\small  $x$}]   (x) at  (4,-2.5) {};
\end{scope} 
\begin{scope}[xshift=1mm]
\node [label=left:{\small  $y$}]   (y) at  (0.5,1.5) {};
\node [label=left:{\small  $-y$}]   (-y) at  (0.5,-1.5) {};
\end{scope}

\draw[thin, dotted] (y)--(ab);
\draw[thin, dotted] (x)--(ab);
\draw[thin, dotted] (-y)--(a-b);

\draw[fill=white] (a) circle [radius=2pt];
\draw[fill=white] (b) circle [radius=2pt];
\draw[fill=white] (ab) circle [radius=2pt];
\draw[fill=white] (a-b) circle [radius=2pt];
\end{scope}

	\end{tikzpicture}
\end{center}
\caption{Proofs of Proposition~\ref{prop:PrelOdd}(ii)--(iv)}\label{Fig:AlphaBeta-etc}
\end{figure}

For (iii), 
observe that
\[
(\delm ,
\delp 
)^{-1}(\leq)\cap\neg(\delm 
,\delp 
)^{-1}(\leq) =
(\delm,\delm )
^{-1}(\leq)\cap\neg 
 (\delm,\delm) 
^{-1}(\leq)
\setminus \bigl(\Zed_{2n}\times \{0\}\bigr) 
 \]
(see Figure~\ref{Fig:AlphaBeta-etc}(b)).
Hence, by (i), $S$ is the graph of some partial endomorphism $e$. Since  
\[(\delm 
,\delp 
)^{-1}(\leq)\cap\neg 
(\delm 
,\delp 
)^{-1}(\leq)
\cap \Bigl (\bigl( \Zed_{2n}\times \{0\}\bigr) \cup \bigl(\{0\}\times \Zed_{2n}\bigr)\Bigr) = \emptyset, 
\]
 it follows that  $0\notin \im e \cup \dom e$, which concludes the proof of (iii).

Finally, to prove (iv) we will prove that  $S\subseteq (\delp 
,\delm 
)^{-1}(\leq)$ implies 
$S\subseteq (\delm,\delm) 
^{-1}(\leq)$ or $S\subseteq  (\delp,\delp) 
^{-1}(\leq)$ and the result
 will then  follow from (i) and (ii).
Note  that  
\begin{multline*}
(\delp 
,\delm )^{-1}(\leq)\cap \neg 
 (\delp 
,\delm 
)^{-1}(\leq) 
= 
\bigl( (\delp,\delp) 
^{-1}(\leq)\cap \neg 
(\delp\delp) 
^{-1}(\leq)
\bigr) \cup \bigl(\{0\} \times \Zed_{2n}\bigr) 
\\
=\bigl(  (\delm,\delm) 
^{-1}(\leq)\cap \neg 
 (\delm,\delm) 
^{-1}(\leq) 
\bigr) \cup ( \Zed_{2n} \times  \{0\})
\end{multline*}  
(see
 Figure~\ref{Fig:AlphaBeta-etc}(c)).  
Suppose there exist $a=(x,0), b=(0,y)\in S$ such that 
$x\neq0\neq y$. 
Assume, without loss of generality, that $0< x,y$. Then 
$ 
{(a\vee b)\to a=(-x\vee x, -y\wedge 0) = (x, -y)}
$ 
(again see Figure~\ref{Fig:AlphaBeta-etc}(c)).
However $\delp 
(x) = 1 \nleqslant 0=\delm 
(-y)$. 
Hence, $S\cap \bigl((\{0\} \times \Zed_{2n})\cup (\Zed_{2n}\times \{0\}) \bigr)$ is contained in either $\{0\} \times \Zed_{2n}$ or in $\{0\} \times \Zed_{2n}$. 
Therefore $S\subseteq (\delp,\delp) 
^{-1}(\leq)$ or ${S\subseteq (\delm,\delm) 
^{-1}(\leq)}$. 
\end{proof}

We are ready to present 
our duality theorem for the odd case.

\begin{thm}\label{Thm:SugDualityOdd}
For $n= 
1,2,\ldots
$,
 the topological structure 
\[ 
\twiddleodd 
=  
(\Zed_{2n+1}; f_0,f_1, f_2, \ldots, f_n, g, {\bf 0},\Tp)
\]
 is an alter ego for 
$\Zed_{2n+1}$ which 
 yields a strong 
duality on $\SA_{2n+1}$.  
\end{thm}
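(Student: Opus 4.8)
The plan is to obtain $\twiddleodd$ from a deliberately oversized strongly dualising alter ego by a sequence of permitted deletions. Concretely, I would first invoke the Piggyback Strong Duality Theorem (Theorem~\ref{Thm:Piggyback}) with $\M=\Zed_{2n+1}$ and $\Omega=\{\delp,\delm\}$, and then prune the resulting alter ego down to $\twiddleodd$ using the $\MT$-Shift Strong Duality Lemma (Lemma~\ref{Lem:Shift}) together with entailment.

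For the first step I need the hypotheses of Theorem~\ref{Thm:Piggyback}. The reduct $\fnt{U}(\Zed_{2n+1})$ is a chain, hence lies in $\cat{D}$; every non-trivial subalgebra of $\Zed_{2n+1}$ is subdirectly irreducible by Proposition~\ref{IrrIdx}; and condition~(i) for the choice $\Omega=\{\delp,\delm\}$ is exactly Lemma~\ref{lem:SepOdd}, where the separating endomorphism may be taken to be a power of $g$ (possibly $\mathrm{id}$), hence a composite of maps in $G$. The theorem then yields a strongly dualising alter ego $\MT_0=(\Zed_{2n+1};G_0,H_0,\{\mathbf 0\},R_0,\Tp)$ in which $G_0\cup H_0=\PEZ{2n+1}$ (the constant $\mathbf 0$ is the unique one-element subalgebra, since $0$ is the only fixed point of $\neg$) and $R_0$ is the set of subalgebras of $\Zed_{2n+1}^2$ that are maximal inside the four sets $(\omega,\omega')^{-1}(\leq)$ with $\omega,\omega'\in\Omega$.

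The pruning has two moves. First, replace $G_0\cup H_0$ by $\{f_0,f_1,f_2,\ldots,f_n,g\}$: by Proposition~\ref{prop:genpeOdd} these generate the monoid $\PEZ{2n+1}$, so every discarded partial endomorphism is a composite of the survivors, and Lemma~\ref{Lem:Shift}(b) keeps the duality strong. Second, delete all of $R_0$. Here Proposition~\ref{prop:PrelOdd} does the work: each subalgebra of $\Zed_{2n+1}^2$ lying in some $(\omega,\omega')^{-1}(\leq)$, and hence each member of $R_0$, is the graph of a partial endomorphism $e$ or the converse of such a graph. Writing $e$ as a composite of $f_0,\ldots,f_n,g$ and using that the graph of a composite is the relational composite of the graphs, that the graph of a (partial) operation is entailed by that operation, and that converse is an entailment construct, one sees that every $r\in R_0$ is entailed by the retained structure $\{f_0,\ldots,f_n,g\}$. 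Removing entailed relations preserves the duality, and by the final clause of Lemma~\ref{Lem:Shift} an alter ego obtained from a strongly dualising one by deleting relations is itself strongly dualising as soon as it still dualises. This gives precisely that $\twiddleodd$ is an alter ego that strongly dualises $\SA_{2n+1}$.

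The only part with real content is the one already settled in Proposition~\ref{prop:PrelOdd}: that the economical choice $\Omega=\{\delp,\delm\}$ forces the piggyback relations to be graphs of partial endomorphisms, or converses thereof, which is what allows $R$ to be absorbed entirely into $G\cup H$. The remainder is bookkeeping, the one point needing care being to keep the two kinds of pruning separate: deletions from $G\cup H$ preserve strongness outright (Lemma~\ref{Lem:Shift}(b)), whereas deletions from $R$ only preserve the duality via entailment and must then be upgraded to strongness through the moreover clause of Lemma~\ref{Lem:Shift}.
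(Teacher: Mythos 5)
Your proposal is correct and follows essentially the same route as the paper: apply the Piggyback Strong Duality Theorem with $\Omega=\{\delp,\delm\}$ (justified by Proposition~\ref{IrrIdx} and Lemma~\ref{lem:SepOdd}), absorb the piggyback relations using Proposition~\ref{prop:PrelOdd} plus entailment, and cut $G\cup H$ down to the generating set from Proposition~\ref{prop:genpeOdd}, with Lemma~\ref{Lem:Shift} guaranteeing strongness at each deletion. The only deviation is that you perform the two pruning steps in the opposite order from the paper, which is immaterial since you correctly note that the graphs of the discarded relations remain entailed by the retained generators.
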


\begin{proof} 
Fix $n$ and let $\M =\Zed_{2n+1}$.   
Proposition~\ref{IrrIdx}  tells us that  every non-trivial subalgebra of~$\Zed_{2n+1}$ is subdirectly irreducible.
So Theorem~\ref{Thm:Piggyback} is applicable provided we satisfy its conditions
(i)--(iv).

By Lemma \ref{lem:SepOdd}  we
satisfy~(i) be taking  $\Omega =\{ \delp 
,\delm 
\}\subseteq\cat D(\fnt{U}(\Zed_{2n+1}),\two)$.
By Proposition~\ref{prop:PrelOdd}, every relation as in 
Theorem~\ref{Thm:Piggyback}(ii)   is the graph of a member of $G \cup H=
\PEZ {2n+1}$ or is the converse of such a relation.  (For the present application
of the theorem the restriction in item (ii)  to subalgebras
which are maximal could be dispensed with.)  
The only one-element subalgebra of $\Zed_{2n+1}$ is $\{0\}$. 
It follows from \cite[Chapter~2, Section~3]{CD98}  
that after discarding all the relations in (ii) from the alter ego  we have  a new dualising alter ego.  
Lemma~\ref{Lem:Shift}  tells us
that this  still yields a strong duality.

Finally, we 
apply Lemma~\ref{Lem:Shift}
again, now with 
$\MT$ having 
$R = \emptyset$:  we may replace 
$G \cup H = \PEZ {2n+1}$
by the  generating set $\{f_0,f_1, 
f_2,\ldots,f_n,g\}$  given in Proposition~\ref{prop:genpeOdd}.   
\end{proof}

We remark for subsequent use that any  morphism in the dual category
$\IScP(\twiddleodd 
)$ 
 preserves all elements of 
$\PEZ{2n+1}$ and not just those present in the alter ego.
  
\section{Test Spaces Method applied to Sugihara algebras: odd case}
\label{Sec:TSM-SugOdd}

In this section we work with $\SA_{2n+1}$, treating $n$ as fixed. 
Step 0 of the Test Spaces Method for $\SA_{2n+1}$  is covered by  
Theorem~\ref{Thm:SugDualityOdd}.
 To accomplish Step 1 of the Test Spaces Method we need to calculate the
dual space $\D(\Zed_{2n+1})$ for the choice $\twiddleodd
$ of alter ego
given in the theorem.

\begin{prop}[{\bf Step 1}] \label{Prop:dualZ2n1}  
Up to an isomorphism of structures, 
the dual space $\D(\Zed_{2n+1})$  has universe 
$\{\, U\in \mathcal{P} (\{ 0,\ldots ,n\} ) \mid 0\in U\,\}$. On this set,  an element  
$e \in G \cup H$ acts by restriction on those sets $U$ for which  $U \subseteq \dom e$  and is undefined otherwise; also the interpretation of ${\bf 0}\in K$  is the set $\{0\}$.
\end{prop}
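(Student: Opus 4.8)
The plan is to realise $\D(\Zed_{2n+1})$ as a set of endomorphisms and then transport its structure along the bijections furnished by Propositions~\ref{prop:subalg} and~\ref{prop:phomOdd}. By definition $\D(\Zed_{2n+1})=\SA_{2n+1}(\Zed_{2n+1},\Zed_{2n+1})$, which is just $\End(\Zed_{2n+1})$ (a homomorphism of an algebra into itself being an endomorphism), a finite set carrying the discrete topology. By Proposition~\ref{prop:phomOdd} the map $h\mapsto\img h$ is a bijection from $\End(\Zed_{2n+1})$ onto the set of subalgebras of $\Zed_{2n+1}$ containing~$0$; by Proposition~\ref{prop:subalg}(ii) the map $\A\mapsto\{\,|a|\mid a\in\A\,\}$ is an order isomorphism from the lattice of subalgebras of $\Zed_{2n+1}$ onto $\mathcal{P}(\{0,\ldots,n\})$, with inverse $U\mapsto\Zed_U:=\{\,a\in\Zed_{2n+1}\mid|a|\in U\,\}$. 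Composing, $\beta\colon h\mapsto\{\,|b|\mid b\in\img h\,\}$ is a bijection from $\D(\Zed_{2n+1})$ onto $\{\,U\in\mathcal{P}(\{0,\ldots,n\})\mid 0\in U\,\}$, with $\beta^{-1}(U)$ the unique endomorphism $h_U$ with image $\Zed_U$.

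Next I would compute the structure that $\beta$ transports to $\{\,U\mid 0\in U\,\}$. For the alter ego $\twiddleodd$ one has $G\cup H=\{f_0,\ldots,f_n,g\}$ and $K=\{{\bf 0}\}$, and (see \cite[Section~1.4]{CD98}) these act on $\D(\Zed_{2n+1})\subseteq\twiddleodd^{\,\Zed_{2n+1}}$ by pointwise lifting: for $e\in G\cup H$ the value $e(h)$ is defined exactly when $h(a)\in\dom e$ for all $a\in\Zed_{2n+1}$, that is, when $\img h\subseteq\dom e$, and then $e(h)=e\newcirc h$ (again a homomorphism, hence a point of $\D(\Zed_{2n+1})$); the constant ${\bf 0}$ is interpreted as the constant homomorphism $a\mapsto 0$. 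Transporting along $\beta$: since $\A\mapsto\{\,|a|\mid a\in\A\,\}$ is an order isomorphism onto the powerset, $\img h\subseteq\dom e$ holds iff $\beta(h)\subseteq\{\,|a|\mid a\in\dom e\,\}$, and writing $\dom e$ also for this subset of $\{0,\ldots,n\}$ (the notational abuse in the statement) this says $e$ is defined on~$U$ iff $U\subseteq\dom e$. When it is defined, $\img(e\newcirc h)=e[\Zed_U]$; homomorphisms preserve $\neg$ and $0$ is the only element of $\Zed_{2n+1}$ with $\neg x=x$, so $e(0)=0\in e[\Zed_U]$, whence $e[\Zed_U]$ is a subalgebra containing~$0$ and $\beta(e\newcirc h)=\{\,|e(a)|\mid a\in\Zed_U\,\}$, i.e.\ $e$ carries~$U$ to the image of the restriction $e{\restriction}_{\Zed_U}$. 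Finally $\beta$ of the constant homomorphism is $\{\,|0|\,\}=\{0\}$.

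It then suffices to note that $\beta$ was built precisely to intertwine the partial operations and the constant, and that it is trivially a homeomorphism (both structures being finite discrete); hence $\beta$ is an isomorphism of topological structures, which is the assertion. For later use one may add that, by the remark closing Section~\ref{Sec:Duality-SugOdd}, every $e\in\PEZ{2n+1}$ acts on $\D(\Zed_{2n+1})$ in exactly this way, not only the generators retained in the alter ego.

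I do not expect a genuine obstacle: the proposition is essentially bookkeeping. The one point needing care is to match the formal definition of the pointwise lifting of a \emph{partial} operation to a closed substructure of a power of the alter ego with the concrete recipe ``$e(h)=e\newcirc h$, defined iff $\img h\subseteq\dom e$'', and then to keep the identification of the subalgebra $\dom e$ with its corresponding subset of $\{0,\ldots,n\}$ consistent throughout; granted that, everything is a direct consequence of Propositions~\ref{prop:subalg}(ii) and~\ref{prop:phomOdd}.
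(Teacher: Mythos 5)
Your argument is correct and follows essentially the same route as the paper's proof: both obtain the bijection from Propositions~\ref{prop:phomOdd} and~\ref{prop:subalg} by sending an endomorphism $h$ to the non-negative part (equivalently, the moduli) of $\img h$, identify the lifted action of $e\in G\cup H$ as $h\mapsto e\newcirc h$ defined exactly when $\img h\subseteq\dom e$, and conclude via $\img(e\newcirc h)=e(\img h)$ that the bijection is an isomorphism of structures. Your treatment of the constant ${\bf 0}$ and the remark about all of $\PEZ{2n+1}$ are consistent with the paper; no gap.
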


\begin{proof}

Together, 
Propositions~\ref{prop:phomOdd} and~\ref{prop:subalg}
 supply a bijection 
from $\D(\Zed_{2n+1})$  onto the family  
of subsets 
 of $\{ 0,1, \dots, n\}$ which contain~$0$: 
this assigns  to 
an endomorphism
$h$ the non-negative elements
of its image $\img h$.  
The lifting of $e \in \PEZ {2n+1}$ to the dual space $\D(\Zed_{2n+1})$  is the map which is defined on those endomorphisms~$h$ for which $\img h \subseteq \dom e$
and which sends any such~$h$ to $e\circ h$. Since $\img( e\circ h)=e(\img h)$ the bijection obtained above is an isomorphism of structures.
\end{proof}

We now put  
forward a candidate 
$(\Y, \mu, \nu)$
 for the TS-configuration  that we require for Step~2.
The underlying set of any TS-configuration
for $\SA_{2n+1}$ is  a set of
$(n+1)$-tuples
 of elements of $\twiddleodd 
$, equipped with  the structure it inherits
from $\twiddleodd 
^{\!\!n+1}
$.  
Let $U = \{0, b_1,\ldots,b_k\} $, where  $\{b_1,\ldots,b_k\} $ is a (possibly empty) subset of $\{1,\ldots,n\}$ whose elements are 
 listed without repetitions and in increasing order.  With this convention,  $U$ is uniquely determined by the $(n+1)$-tuple
$(0, \ldots, 0 ,b_1,\ldots ,b_k)$, where there are $n+1-k$ zeros.

Define 
   \begin{align*}
   \Y 
 = \{\, \bvec{a}=(a_1,\ldots,a_{n+1})
\mid \exists\,  j \geq 1
 [\forall i\leq j (0\leq a_i= a_j)  
\mbox{ and }\forall k\geq j (a_k < a_{k+1})]\,\}.
\end{align*} 
The value of~$j$ depends on $
\bvec{a}$.  When convenient we write $j$ as  $j_{\bvec{a}}$.
In what follows, 
we shall  make use several times of the fact  (the \emph{uniqueness property}) 
that, for each choice of~$j$, there is one and only one element  
$(a_1,\ldots,a_{n+1})$ of~$\Y$ whose set of coordinates is  
$\{a_j,\ldots,a_{n+1}\}$.   

Define $\nu \colon  \D(\Zed_{2n+1})\to \Y$ 
by 
\begin{multline*}  
\nu 
( \{0, b_1, \ldots,  b_k\}) =(0,\ldots, 0,
 b_1, \ldots,  b_k) ,
\text{where the first 
$(n+1-k)$ 
 coordinates are zero} \\
\text{
and }
    0<b_p<b_q\text{ when }1 \leq p<q \leq k.   
\end{multline*}
Given $\bvec{a}=(a_1,\ldots,a_{n+1}) \in \twiddleodd ^{n+1}
$, we
 define $\mu$ to be the map  that assigns to $\bvec{a}$
the unique element of $\Y$ whose set of coordinates is $\{|a_1|,\ldots,|a_{n+1}|\}$. Thus 
if $\{|a_1|,\ldots,|a_{n+1}|\}=\{c_1,\ldots,c_r\}$ with $0\leq c_1<c_2<\cdots<c_r$, 
then $\mu(a_1,\ldots,a_{n+1})=(c_1,\ldots,c_1,c_2,\ldots,c_r)$.

\begin{prop}[{\bf Step 2}] \label{Prop:X2n1-as-TSOdd}
  Define $\Y$, $\mu$ and $\nu$ as above.  Then 
$ (\Y ,\mu,\nu) $ is a $\text{\rm TS}$-configuration.
\end{prop}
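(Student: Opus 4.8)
The plan is to verify directly the three defining requirements of a TS-configuration: that $\Y$ is a substructure of $\twiddleodd^{n+1} = \D(\F_{\SA_{2n+1}}(n+1))$, that $\nu\colon \D(\Zed_{2n+1}) \to \Y$ is an embedding, and that $\mu\colon \twiddleodd^{n+1} \to \Y$ is a surjective morphism. Every space in sight is finite and carries the discrete topology, so closedness of substructures and continuity of maps are automatic and the topology can be ignored throughout. The single fact that drives all the verifications is the compatibility of the operations of $\twiddleodd$ with the modulus: one has $|{-a}| = |a|$ and $|g(a)| = g(|a|)$ for every $a \in \Zed_{2n+1}$, and for each $i$, if $a \in \dom f_i$ then $|a| \in \dom f_i$ and $|f_i(a)| = f_i(|a|)$ (a short case check from the definitions of $g$ and the $f_i$). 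Regarded as self-maps of $\{0,1,\ldots,n\}$, which is precisely the range of $|\cdot|$, the map $g$ is order-preserving and fixes $0$, each $f_i$ with $2 \le i \le n$ restricts to a strictly order-preserving bijection of $\{0,\ldots,n\}\setminus\{i\}$ onto $\{0,\ldots,n\}\setminus\{i-1\}$ (with non-negative values on non-negative arguments), and $f_0, f_1$ are the identity on their domains. Together with the uniqueness property, these observations turn every step into bookkeeping.

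First I would check that $\Y$ is a substructure. The constant $(0,\ldots,0)$ lies in $\Y$ (take $j = n+1$). For closure, let $\bvec a \in \Y$ have witness $j$, so its coordinates equal some $c \ge 0$ for $i \le j$ and are strictly increasing thereafter. Applying $g$ coordinatewise keeps all coordinates non-negative and, since $g$ is order-preserving on $\{0,\ldots,n\}$ and identifies at most $0$ with $1$, again yields a tuple of this shape, with $j$ unchanged, or increased by one in the single case $c = 0$, $a_{j+1} = 1$; hence $g(\bvec a) \in \Y$. If no coordinate of $\bvec a$ equals $i$, then applying $f_i$ coordinatewise keeps the coordinates non-negative and, by strict monotonicity of $f_i$ off $\{i\}$, preserves the shape with the same $j$; hence $f_i\bigl(\dom f_i^{\twiddleodd^{n+1}} \cap \Y\bigr) \subseteq \Y$. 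So $\Y$ is a substructure of $\twiddleodd^{n+1}$, and thus an object of $\IScP(\twiddleodd)$.

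Next, $\mu$ is well defined by the uniqueness property and preserves the constant; it is surjective because it restricts to the identity on $\Y$ (every member of $\Y$ has non-negative coordinates, so $|\bvec c| = \bvec c$). For operation-preservation, recall that $\mu(\bvec a)$ is the unique member of $\Y$ whose set of coordinates equals $\{\,|a_1|,\ldots,|a_{n+1}|\,\}$. Using $|g(a)| = g(|a|)$ one sees that $g(\mu(\bvec a))$ and $\mu(g(\bvec a))$ have the same coordinate set; both lie in $\Y$, so they coincide. For $f_i$: if $\bvec a \in \dom f_i^{\twiddleodd^{n+1}}$ then no $|a_\ell|$ equals $i$, so $\mu(\bvec a)$ has no coordinate equal to $i$ and hence lies in $\dom f_i^{\Y}$; and $|f_i(a)| = f_i(|a|)$ then gives that $f_i(\mu(\bvec a))$ and $\mu(f_i(\bvec a))$ have the same coordinate set, hence are equal. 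So $\mu$ is a surjective morphism.

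Finally, $\nu$. By Proposition~\ref{Prop:dualZ2n1} the dual $\D(\Zed_{2n+1})$ is carried by $\{\,U \subseteq \{0,\ldots,n\} \mid 0 \in U\,\}$, with $e \in G \cup H$ acting on each $U$ contained in $\dom e$ by $U \mapsto \{\,e(u) \mid u \in U\,\}$ and $\mathbf{0}$ interpreted as $\{0\}$; under this description $\nu$ sends $U$ to the unique element of $\Y$ with coordinate set $U$. It is visibly injective and preserves the constant, and the coordinate-set argument used for $\mu$ shows it preserves $g$ and each $f_i$. It also reflects the partial-operation domains: a coordinate of $\nu(U)$ equals $i$ exactly when $i \in U$, which for $i \ge 1$ is the condition defining $\dom f_i$ on the $\D$-side, while $\dom f_0^{\D} = \emptyset$ (as $0 \in U$ always) is matched by the fact that $\nu(U)$ always has a zero coordinate (since $|U| \le n+1$). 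Being an injective morphism of $\twiddleodd$-structures that reflects the partial operations—and the alter ego has no relations—$\nu$ is an isomorphism onto the substructure $\nu(\D(\Zed_{2n+1}))$ of $\Y$ (the tuples with a zero coordinate), hence an embedding. The only point that genuinely requires care, here and in the treatment of $\mu$, is this domain bookkeeping for the partial endomorphisms $f_i$: checking that $\nu$ reflects, and $\mu$ preserves, membership in $\dom f_i$ is exactly where a TS-configuration built from an alter ego with partial operations behaves differently from the purely relational case, whereas the clauses for $g$ and for the constant are routine.
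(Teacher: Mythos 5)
Your proposal is correct and follows essentially the same route as the paper: verify that $\Y$ is a substructure containing $\bvec{0}$, that $\nu$ is an embedding, and that $\mu$ is a surjective morphism, with everything resting on the interaction of the (order-preserving) maps in $G\cup H$ with the modulus. The paper compresses the modulus compatibility into the remark that $|x|$ is a term function (hence preserved by every partial endomorphism) and leaves the substructure/embedding checks as ``straightforward''; your write-up simply fills in those same checks, including the domain bookkeeping for the partial operations, in full detail.
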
  

\begin{proof} In our alter ego for $\Zed_{2n+1}$,  we have  $G = \{g\}$, $H = \{f_0,f_1,f_2, \ldots, f_{n}\}$, $K=\{{\bf 0}\}$ and $R =\emptyset$.
Since all the maps in $G\cup H$ are order preserving, it is straightforward to check  
 that $\Y$ is closed under the action of~$G\cup H$
and that $\nu$ is an embedding.
 Moreover, ${\bf 0}=(0,\ldots,0)\in \Y$.
Since modulus is a term function on $\Zed_{2n+1}$ it is preserved by any element of $\PEZ{2n+1}$, and so $\mu $ is a morphism.  Also $\mu$ is clearly 
surjective.
\end{proof}

In preparation for  Steps 3 and  5 we  introduce some notation. 
 For each $k$ with  $1 \leq k\leq n$ 
 there is a unique element in~$\Y$,  \textit{viz.}
$\bvec{k} := (1, \ldots, 1, 2,\ldots,k)$, whose set of coordinates is $\{1,2,\ldots,k\}$;
 it has  
$n-k+2$  coordinates equal to~$1$. 
Also define $\bvec{n+1}:=(0,1,\ldots,n)$.

Let  $\sigma\colon \Y\to \{1,\ldots,n+1\}$ be the map sending  $(u_1,\ldots,u_{n+1})\in\Y$
to the cardinality of the set
$\{u_{1},\ldots, u_{n+1}\}$.
Observe that  
$\sigma(\bvec{k})=k$  and that $j_{\bvec{k}} = n-k+2$,  for $1 \leq k \leq n+1$.
   
\begin{lem}\label{lem:genclaimOdd}
Let $\Y$ be the structure defined above.  Let $\bvec{u} = (u_1,\ldots, u_{n+1})$ belong to~$\Y$.  Then 
\begin{enumerate}
\item[{\rm (i)}] 
if $u_1>0$ then $\bvec{u}$ lies in the substructure generated by $\{ \bvec{1}. \ldots, \bvec{n}\}$;  
\item[{\rm (ii)}] if 
$u_1 = 0$ then $\bvec{u}$ lies in the substructure generated by $\bvec{n+1}$.
\end{enumerate}
\noindent 
Consequently, $\{\bvec{1},\ldots,\bvec{n+1}\}$ generates~$\Y$.
\end{lem}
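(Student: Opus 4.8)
The cleanest route is to pass to a combinatorial model of $\Y$ and then exhibit explicit words in the structure maps $g, f_0, \ldots, f_n$. First I would record the \emph{coordinate-set picture}: by the uniqueness property the assignment $\bvec{a} \mapsto \{a_1, \ldots, a_{n+1}\}$ is a bijection from $\Y$ onto the nonempty subsets of $\{0, 1, \ldots, n\}$ (the minimum of the set being repeated to pad the tuple out to length $n+1$), and, since every map in $G \cup H$ is order preserving on nonnegative integers, applying such a map coordinatewise to $\bvec{a} \in \Y$ returns the element of $\Y$ whose coordinate set is the image of $\{a_1,\ldots,a_{n+1}\}$. On coordinate sets $T$ this reads: $g$ acts by $T \mapsto \{\, \max(t-1, 0) \mid t \in T \,\}$; $f_i$ is applicable exactly when $i \notin T$, and then sends $T$ to $(T \setminus \{i-1\}) \cup \{i\}$ if $i-1 \in T$ and fixes $T$ otherwise. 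Here $\bvec{k}$ corresponds to $\{1, \ldots, k\}$ for $1 \leq k \leq n$, and $\bvec{n+1}$ to $\{0, 1, \ldots, n\}$, so the task is to reach every nonempty $T \subseteq \{0, \ldots, n\}$ from these.

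For part~(i), $u_1 > 0$ means $0 \notin T$; write $T = \{s_1 < \cdots < s_r\}$, so $1 \leq r \leq n$ and $s_p \geq p$ for each $p$. Starting from $\{1, \ldots, r\}$, the coordinate set of $\bvec{r}$, I would raise the coordinates to their targets one position at a time, working downwards from the largest: once $s_{p+1}, \ldots, s_r$ have been put in place, apply $f_{v+1}$ repeatedly to push the $p$-th element from its current value $v$ up to $s_p$. For part~(ii), $u_1 = 0$: first apply $g$ to $\bvec{n+1}$ exactly $n - r$ times to reach $\{0, 1, \ldots, r\}$, where $r$ is the number of positive coordinates of $\bvec{u}$ (if $r = 0$ this is already $\{0\}$, i.e.\ $(0, \ldots, 0)$); then run the same raising procedure on the positive part, the coordinate $0$ being fixed by every $f_i$ with $i \geq 2$. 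The final statement is then immediate: every $\bvec{u} \in \Y$ has $u_1 \geq 0$ and so is covered by (i) or (ii), so $\{\bvec{1}, \ldots, \bvec{n+1}\}$ generates a substructure of $\twiddleodd^{n+1}$ containing all of $\Y$; as the generators lie in $\Y$ and $\Y$ is a substructure by Proposition~\ref{Prop:X2n1-as-TSOdd}, the generated substructure is exactly $\Y$.

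The one point requiring genuine care, and hence the main obstacle, is verifying that each $f_{v+1}$ invoked above is legitimately applicable: that $v+1$ is not already a coordinate, and that $2 \leq v+1 \leq n$ so that $f_{v+1}$ really is one of the maps in the alter ego. I would make this precise by maintaining the loop invariant that, while the $p$-th coordinate is being raised, the current coordinate set is $\{1, \ldots, p-1\} \cup \{v\} \cup \{s_{p+1}, \ldots, s_r\}$ (together with $0$ in case (ii)), with $p \leq v < s_p \leq n$. Then $v+1$ exceeds $p-1$ and $v$ but does not exceed $s_p$, hence it is smaller than every $s_q$ with $q > p$, so $v+1$ lies outside the current set; and $2 \leq v+1 \leq n$; and applying $f_{v+1}$ restores the invariant with $v$ replaced by $v+1$, after which one passes to position $p-1$ once $v$ has reached $s_p$. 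Everything else is routine bookkeeping.
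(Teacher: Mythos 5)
Your proof is correct, and it follows the same skeleton as the paper's — reduce $\bvec{u}$ with $u_1>0$ to the canonical element $\bvec{r}$ where $r$ is the number of distinct coordinates, and reduce $\bvec{u}$ with $u_1=0$ to a power of $g$ applied to $\bvec{n+1}$ — but it differs in how the key step is discharged. The paper simply invokes Corollary~\ref{cor:k-trans} to produce a single invertible $e\in\PEZ{2n+1}$ carrying the canonical element coordinatewise onto $\bvec{u}$, and then uses the fact (via Proposition~\ref{prop:genpeOdd} and the remark following the duality theorem) that the substructure generated by an element of $\Y$ is closed under the pointwise action of \emph{every} partial endomorphism, since each is a composite of the alter-ego maps $f_0,\ldots,f_n,g$. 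You instead open that black box: working in the coordinate-set model of $\Y$, you exhibit an explicit word in $g$ and the $f_i$ (your ``raising procedure'', with the loop invariant guaranteeing that each $f_{v+1}$ you apply is defined, lies in the alter ego, and moves the set as intended). Your verification of applicability ($v+1\notin T$, $2\le v+1\le n$, the coordinate $0$ untouched in case (ii)) is exactly the point that needs care, and it checks out. What the paper's route buys is brevity — two sentences per case, leaning on results already proved; what yours buys is self-containedness and an explicit algorithm for generation, at the cost of the bookkeeping, and it never needs Corollary~\ref{cor:k-trans} or the generation result for $\PEZ{2n+1}$ at all. Both arguments are sound; your concluding step (generators lie in $\Y$, which is a substructure by Proposition~\ref{Prop:X2n1-as-TSOdd}, so the generated substructure is exactly $\Y$) matches the paper's implicit reasoning.
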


\begin{proof} 
Consider (i).  Here $\sigma(\bvec{u})=k$,  where $k \leq n$ and 
$j_{\bvec{u}} = j_{\bvec{k}}$.  
Corollary~\ref{cor:k-trans}   supplies 
$e\in \PEZ{2n+1}$ such that  
$u_i = e(\bvec{k}_i)$
 for each~$i$.   Since $\bvec{u}$ preserves~$e$ (coordinatewise) (i)
 follows.

Now consider (ii).  
If $\sigma(\mathbf{u})= n+1$ then $\mathbf{u}= \bvec{n+1}$ and there is nothing to prove.   
Assume $\sigma(\mathbf{u}) = k \leq n$.   Then
$j_{\bvec{u} } = n-k+2$.
 Also
$\bvec{v}:= g^{n-k+1}(\bvec{n+1}) = (0,\ldots ,0,1, \ldots , k-1)$
and $j_{\bvec{v}}=j_{\bvec{u}}$.   By Corollary~\ref{cor:k-trans}(ii)
there exists $e \in \PEZ{2n+1}$ such that $e(\bvec{v}) = \bvec{u}$.  We conclude that  $\bvec{u}$ belongs to the substructure of~$\Y$ generated by $\bvec{n+1}$. 
\end{proof}

\begin{prop}[{\bf Step 3}] \label{Prop:X_SX}
  $\Y$ is join-irreducible in $\mathcal{S}_\Y$. 
\end{prop}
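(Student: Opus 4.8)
The plan is to show that $\Y$ cannot be written as a join of proper substructures in the lattice $\mathcal{S}_{\Y}$, by exhibiting a single element of $\Y$ with the property that any morphic self-image of $\Y$ containing it must be all of $\Y$. The natural candidate is $\bvec{n+1} = (0,1,\ldots,n)$, the unique element of $\Y$ whose coordinate set has full cardinality $n+1$, equivalently the unique element with $j_{\bvec{u}} = 1$ and all coordinates distinct. Recall from Lemma~\ref{lem:genclaimOdd} that $\{\bvec{1},\ldots,\bvec{n+1}\}$ generates $\Y$; moreover each $\bvec{k}$ with $k \leq n$ lies in the substructure generated by $\bvec{n+1}$, since (by the proof of Lemma~\ref{lem:genclaimOdd}(ii), using the constant $\mathbf 0$, the endomorphism $g$, and a partial endomorphism from Corollary~\ref{cor:k-trans}) every element with first coordinate $0$ is reachable from $\bvec{n+1}$, and in fact $\bvec{k}$ itself is obtained by applying a suitable partial endomorphism to $g^{n-k+1}(\bvec{n+1})$ followed by... wait---rather, $\bvec{k}$ has $u_1 > 0$, so I instead argue directly: from $\bvec{n+1}$ one obtains $\bvec{v} = g(\bvec{n+1}) = (0,\ldots,0,1,\ldots,n-1)$ and, by Corollary~\ref{cor:k-trans}(ii), a partial endomorphism carrying the nonzero coordinates appropriately; combined with the full power of $\PEZ{2n+1}$ acting on $\Y$ (the remark after Theorem~\ref{Thm:SugDualityOdd}), the substructure generated by $\bvec{n+1}$ contains each $\bvec{k}$, hence equals $\Y$.

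So the key claim to establish is: \emph{if $\varphi\colon\Y\to\Y$ is a morphism with $\bvec{n+1}\in\img\varphi$, then $\img\varphi$ generates $\Y$ as a substructure}. Since $\bvec{n+1}$ generates $\Y$ by the preceding paragraph, it suffices that $\bvec{n+1}\in\img\varphi$, which is exactly the hypothesis; so the substructure generated by $\img\varphi$ contains the substructure generated by $\{\bvec{n+1}\}$, which is $\Y$. This gives the result immediately, \emph{provided} we know that in $\mathcal{S}_{\Y}$ the element $\Y$ being join-reducible would force $\Y = \Z_1 \vee \cdots \vee \Z_m$ with each $\Z_i \subsetneq \Y$ a member of $\mathcal{S}_{\Y}$; then each $\Z_i$, being generated by images of morphisms $\Y\to\Y$ none of whose images contain $\bvec{n+1}$ (else $\Z_i = \Y$), omits $\bvec{n+1}$; but their join in $\mathcal{S}_{\Y}$ is the substructure generated by their union, which still omits $\bvec{n+1}$ unless one of the generating morphism-images already hits the elements needed to generate $\bvec{n+1}$ --- and here I must check that $\bvec{n+1}$ is not in the substructure generated by $\Y\setminus\{\bvec{n+1}\}$, or more precisely that no proper member of $\mathcal{S}_{\Y}$ contains $\bvec{n+1}$.

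Thus the crux is the characterisation of which elements of $\Y$ can lie in $\img\varphi$ for a morphism $\varphi\colon\Y\to\Y$ that is not "full", i.e. the point is to show $\bvec{n+1}$ is \emph{isolated} in the sense that its presence forces surjectivity-up-to-generation. Concretely: a morphism $\varphi\colon\Y\to\Y$ preserves the constant $\mathbf 0$, the endomorphism $g$, all partial endomorphisms of $\Zed_{2n+1}$, and (coordinatewise) modulus; the element $\bvec{n+1} = (0,1,\ldots,n)$ is the unique element of $\Y$ lying in $\dom g$-nothing special, but it is the unique element at which the "height" $\sigma$ attains $n+1$. Since $\sigma$ is respected in an appropriate monotone sense by the generating maps (applying a partial endomorphism cannot increase the number of distinct coordinates, and $g$ decreases or preserves it in a controlled way --- actually $g$ applied to $\bvec{n+1}$ reduces $\sigma$ from $n+1$ to $n$), no generator produces $\bvec{n+1}$ from a smaller element; hence any substructure containing $\bvec{n+1}$ must contain it "as a generator", and conversely once $\bvec{n+1}$ is available everything is. \textbf{The main obstacle} I anticipate is the bookkeeping in the last step: verifying carefully that $\bvec{n+1}$ genuinely cannot be generated from $\Y\setminus\{\bvec{n+1}\}$ using compositions of $g$, the partial endomorphisms $f_0,\ldots,f_n$, and the constant $\mathbf 0$ acting coordinatewise --- i.e. that $\bvec{n+1}$ is join-irreducible "from below" in the subalgebra-generation sense. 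This should follow because every such generator, read on the second coordinate onwards, is order-preserving and non-expansive on the coordinate set, and the only way to obtain $n+1$ distinct coordinates $0,1,\ldots,n$ is to already have an element realising all of them; $\mathbf 0$ contributes only zeros, $g$ strictly shrinks the positive part, and the $f_i$ either fix or identify a pair. I would organise this as a short lemma: the substructure of $\Y$ generated by $\Y\setminus\{\bvec{n+1}\}$ equals $\Y\setminus\{\bvec{n+1}\}$, whence $\bvec{n+1}$ witnesses join-irreducibility of $\Y$ in $\mathcal{S}_{\Y}$.
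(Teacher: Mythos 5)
There is a genuine gap, and it sits at the very centre of your argument: the claim that $\bvec{n+1}=(0,1,\ldots,n)$ alone generates $\Y$ is false. Every map in the alter ego (indeed every element of $\PEZ{2n+1}$) that is defined at $0$ must fix $0$, since $0$ is the unique solution of $\neg x = x$ (Proposition~\ref{prop:phomOdd}), while $f_0$ is simply undefined at $0$; the constant $\mathbf{0}$ also has all coordinates zero. Hence every element of the substructure generated by $\bvec{n+1}$ has first coordinate $0$, so none of $\bvec{1},\ldots,\bvec{n}$ (whose first coordinate is $1$) is reachable. This is precisely why Lemma~\ref{lem:genclaimOdd} is split into two cases and needs the whole set $\{\bvec{1},\ldots,\bvec{n+1}\}$ as a generating set. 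Your attempted repair via Corollary~\ref{cor:k-trans}(ii) cannot work either: the partial endomorphism it supplies satisfies $e(0)=0$, so applied coordinatewise to $g(\bvec{n+1})=(0,\ldots,0,1,\ldots,n-1)$ it again returns a tuple with first coordinate $0$. Consequently the reduction ``$\bvec{n+1}\in\img\varphi$, and $\bvec{n+1}$ generates $\Y$, hence $\img\varphi$ generates $\Y$'' collapses, and your proposal never shows that the elements with positive first coordinate lie in the substructure generated by $\img\varphi$.

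That missing step is exactly where the paper's proof does its real work. From $\bvec{n+1}\in\img\varphi$ one first gets $\varphi(\bvec{n+1})=\bvec{n+1}$, because $\bvec{n+1}$ is the unique element of $\Y$ lying in no $\dom f_i$ and morphisms must map domains into domains; by Lemma~\ref{lem:genclaimOdd}(ii) this forces $\varphi$ to fix every element with first coordinate $0$. Then, for an arbitrary $\bvec{y}=(b_1,\ldots,b_{n+1})\in\Y$, a separate argument is needed: the pattern of domains of the $f_i$ shows that the coordinate set of $\varphi(\bvec{y})$ is contained in $\{b_j,\ldots,b_{n+1}\}$, and applying $g^{b_j}$ (whose value on $\bvec{y}$ has first coordinate $0$ and is therefore fixed by $\varphi$) pins down each coordinate, giving $\varphi(\bvec{y})=\bvec{y}$; thus $\varphi$ is the identity and $\img\varphi=\Y$. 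Your final paragraph, observing that $\bvec{n+1}$ is not in the image of any lifted operation and so cannot be generated from $\Y\setminus\{\bvec{n+1}\}$, is a correct and relevant point (it is what justifies applying the join-irreducibility criterion to the element $\bvec{n+1}$), but it does not substitute for the argument just described, which your proposal omits entirely.
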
  
\begin{proof} 
Let  $\varphi \colon \Y \to \Y$ be a morphism such that 
$ \bvec{n+1}\in \img\varphi $. 
We claim that $\varphi $ is the identity map.
Let $\bvec{x}         
$ be such that 
$\varphi (\bvec{x}  )= \bvec{n+1}$. 
Since $(0,1,\ldots, n) \notin \dom e$ for any $e \in H$ and it
is the only element of $\Y$ with this property, $ \bvec{x} 
$ is not in the domain of any element of $H$, so 
$\bvec{x} =\bvec{n+1}$. 
That is, $\bvec{n+1}$ 
 is fixed by $\varphi $. By Lemma~\ref{lem:genclaimOdd},
the morphism  $\varphi $ fixes any element in~$\Y$ having first coordinate zero.

Now let $\bvec{y} 
=(b_1,\dots,b_{n+1})$ be any element of~$\Y$.
By the uniqueness property, $\bvec{y}
$ is the only element of~$\Y$ with set of coordinates $\{b_j,\ldots,b_{n+1}\}$,
where $j = j_{\bvec{y}}$. 
Therefore $\bvec{y}  
$ is the only element of $\Y$ that is in  $\dom f_i$ 
if and only if $i\notin \{b_j,\ldots,b_{n+1}\}$. 
Hence $\{c_1,\ldots,c_{n+1}\}\subseteq\{b_j,\ldots,b_{n+1}\}$,  where $\varphi (\bvec{y})=
(c_1, \ldots, c_{n+1})$. 
Observe that 
$
g^{b_j}(\bvec{y})=(0,\ldots,0,b_{j+1}-b_{j},\ldots,b_{n+1}-b_j)$,
where $j = j_{\bvec{y}}$, and  that 
this element is fixed by $\varphi $.  
Since $g^{b_j}(b)=b-b_j$ for any  $b\in  \{b_j,\ldots,b_{n+1}\}=\{b_1,\ldots,b_{n+1}\}$, and in particular when $b = c_i$ ($1 \leq i \leq n+1$), 
it follows that
\[
(c_1-b_j,\ldots,c_{n+1}-b_j)=g^{b_j}(\varphi (\bvec{y}  
))=
\varphi (g^{b_j}(\bvec{y}  
))
=g^{b_j}(\bvec{y} 
)=(b_{1}-b_{j},\ldots,b_{n+1}-b_j).
\]
Therefore, $c_i=b_i$ for each $i\in\{0,1,\ldots,n\}$.
\end{proof}

\begin{prop}[{\bf Step 5:  admissibility algebra}]\label{Prop:AnOdd}
Let $\alg{B}\subseteq\Zed_{2n+1}^{\!\!n+1}$ be the subalgebra whose elements $(a_1,\ldots,a_n,a_{n+1})$ satisfy the following conditions:
\begin{itemize}
\item[{\rm (i)}]
$a_1\in\{1,-1\}$;
\item[{\rm (ii)}]
$a_k\neq 0$ for each $k\in\{1,\ldots,n\}$;
\item[{\rm (iii)}] there exists a value  of $ j$, necessarily unique,  with $j \leq n$ such that 
\begin{itemize}
\item[{\rm (a)}]
$|a_k| = 1 $ if $ k\leq j$  and   
{\rm (b)} $g(a_{k+1})=a_{k}$ if $n>k\geq j$;
\end{itemize}
\item[{\rm (iv)}] 
$g(a_{n+1})=g(a_n)$.

\end{itemize}
Then $\alg{B}$ is a subalgebra of $\Zed_2\times \Zed_4\times\cdots\times \Zed_{2n}\times \Zed_{2n+1}$ and it is isomorphic to $\E (\Y)$.
\end{prop}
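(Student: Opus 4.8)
The plan is to realise $\E(\Y)=\CX(\Y,\twiddleodd)$ as a subalgebra of $\Zed_{2n+1}^{\,n+1}$ by evaluating at the generating set $\{\bvec 1,\dots,\bvec{n+1}\}$ supplied by Lemma~\ref{lem:genclaimOdd}, and then to check that the subalgebra so obtained is exactly the set cut out by (i)--(iv). Consider
\[
\rho\colon \E(\Y)\longrightarrow \Zed_{2n+1}^{\,n+1},\qquad
\rho(\varphi)=\bigl(\varphi(\bvec 1),\dots,\varphi(\bvec{n+1})\bigr).
\]
Since $\E(\Y)$ is by construction a subalgebra of $\Zed_{2n+1}^{\Y}$ and $\rho$ is the restriction to $\E(\Y)$ of the product of the $n+1$ coordinate projections indexed by $\bvec 1,\dots,\bvec{n+1}$, the map $\rho$ is a homomorphism; it is injective because a morphism out of $\Y$ is determined by its values on a generating set, and $\{\bvec 1,\dots,\bvec{n+1}\}$ generates $\Y$ by Lemma~\ref{lem:genclaimOdd}. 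Hence $\E(\Y)\cong\rho(\E(\Y))$, and it remains to prove $\rho(\E(\Y))=\B$; the assertion that $\B$ is a subalgebra then needs no separate argument, $\rho(\E(\Y))$ being a homomorphic image.

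For the inclusion $\rho(\E(\Y))\subseteq\B$, fix $\varphi\in\E(\Y)$ and put $a_k:=\varphi(\bvec k)$. By Proposition~\ref{prop:subalg} the subalgebra of $\Zed_{2n+1}$ generated by the coordinate values $\{1,\dots,k\}$ of $\bvec k$ is $\{\pm1,\dots,\pm k\}\cong\Zed_{2k}$, so $\bvec k$ lies in the pointwise domain of $e\in\PEZ{2n+1}$ exactly when $\{\pm1,\dots,\pm k\}\subseteq\dom e$; letting $e$ run through $f_0,f_1,\dots,f_n$ and using that every morphism preserves partial endomorphisms forces $a_k\in\{\pm1,\dots,\pm k\}$ for $1\le k\le n$, which gives (i), (ii) and the sharper bound $|a_k|\le k$. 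A direct computation then gives the identity $g(\bvec k)=g^{\,n-k+1}(\bvec{n+1})$ in $\Y$ for $1\le k\le n$ (these coincidences are already implicit in the proof of Lemma~\ref{lem:genclaimOdd}); applying $\varphi$ yields $g(a_k)=g^{\,n-k+1}(a_{n+1})$, whose case $k=n$ is (iv) and which, taken for all $k$ together with $a_k\ne0$ and $|a_k|\le k$, pins down the staircase shape in (iii) with $j$ the largest index $\le n$ for which $|a_j|=1$; uniqueness of such $j$ is then clear.

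For the reverse inclusion $\B\subseteq\rho(\E(\Y))$, given a tuple $(a_1,\dots,a_{n+1})$ satisfying (i)--(iv) one must build a morphism $\varphi\colon\Y\to\twiddleodd$ with $\varphi(\bvec k)=a_k$. By Lemma~\ref{lem:genclaimOdd} together with Corollary~\ref{cor:k-trans}, every $\bvec u\in\Y$ is obtained coordinatewise as $e\circ\bvec k$ for a suitable $e\in\PEZ{2n+1}$: from $\bvec k$ with $k=\sigma(\bvec u)$ when $u_1>0$, and from $\bvec{n+1}$ when $u_1=0$. One sets $\varphi(\bvec u):=e(a_k)$ (respectively $e(a_{n+1})$) and must check that $\varphi$ is well defined and structure-preserving; I expect this to be the main obstacle. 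Well-definedness is precisely where (iii) and (iv) are consumed: they transcribe the coincidences $g(\bvec k)=g^{\,n-k+1}(\bvec{n+1})$ and the domain-membership pattern of the $\bvec k$ into equations on the $a_k$, so that two presentations $e\circ\bvec k=e'\circ\bvec k$ of one element of $\Y$ are forced to satisfy $e(a_k)=e'(a_k)$; the general case is reduced to these via the uniqueness property (an element of $\Y$ is determined by its coordinate set from position $j_{\bvec u}$ on) and the fact that each partial endomorphism preserves modulus. Structure-preservation of $\varphi$ follows from closure of $\PEZ{2n+1}$ under composition and the recorded compatibilities; note that $\varphi({\bf 0})={\bf 0}$ automatically, since ${\bf 0}\in\Y$ equals $g^{\,n}(\bvec{n+1})$ and $g^{\,n}(a_{n+1})=0$.

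The last rider, $\B\le\Zed_2\times\Zed_4\times\cdots\times\Zed_{2n}\times\Zed_{2n+1}$, is immediate from the bounds established above: $a_k\in\{\pm1,\dots,\pm k\}=\Zed_{2k}$ for $1\le k\le n$ (a consequence of (i)--(iii)) while $a_{n+1}$ may take any value in $\Zed_{2n+1}$; and the isomorphism $\B\cong\E(\Y)$ is the one delivered by $\rho$.
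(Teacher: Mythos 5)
Your map $\rho$ is precisely the paper's map $t$, and your injectivity/homomorphism argument via Lemma~\ref{lem:genclaimOdd} is the same as the paper's; so the approach is the paper's, but both substantive halves have gaps. In the forward inclusion, the assertion that the relations $g(a_k)=g^{\,n-k+1}(a_{n+1})$ together with $a_k\ne0$ and $|a_k|\le k$ ``pin down the staircase shape in (iii)'' is not argued, and it fails at the junction $k=j$: those relations only give $g(a_k)=g^{2}(a_{k+1})$, and $g$ is injective only off $\{-1,0,1\}$, so when $|a_j|=1$ both sides vanish and no link between $\sgn a_j$ and $\sgn a_{j+1}$ follows. Indeed it cannot follow: for $n=2$ the assignment $(1,1,1)\mapsto1$, $(2,2,2)\mapsto2$, $(1,1,2)\mapsto-2$, $(0,1,2)\mapsto-2$, $(0,0,2)\mapsto-2$, $(0,0,1)\mapsto-1$, $(0,0,0)\mapsto0$ is a morphism $\Y\to\twiddleodd$, i.e.\ an element of $\E(\Y)$, and its $\rho$-image $(1,-2,-2)$ violates (iii)(b) at $k=j=1$. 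Your relations do yield (iii)(b) for $k>j$ (there $|a_k|\ge2$ and injectivity of $g$ off $\{-1,0,1\}$ applies), but at $k=j$ the sign of the tail is independent of $\sgn a_j$ --- compare the two ``forms'' of elements of $\B$ used in the surjectivity part of the paper's proof and the count $5\cdot2^n-4$ in Section~\ref{sec:sumup} --- so this point must be confronted explicitly (it also bears on how condition (iii)(b) at $k=j$ is to be read), not asserted.

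The reverse inclusion is the heart of the matter and your proposal only outlines it. Writing ``set $\varphi(\bvec u):=e(a_{\sigma(\bvec u)})$ for a suitable $e$ from Corollary~\ref{cor:k-trans} and check well-definedness and structure-preservation, which I expect to be the main obstacle'' defers exactly the work that constitutes the paper's surjectivity argument: an explicit formula, $x_{\bvec a}(\bvec u)=\sgn\bvec a_{\sigma(\bvec u)}\cdot\bvec u_{n+2-j}$ with a separate prescription at $\bvec{n+1}$, followed by a verification that ${\bf 0}$, every $f_i$, and above all $g$ are preserved; for $g$ this requires treating $\bvec u=\bvec{n+1}$ (where (iv) is consumed) and the cases $\sigma(\bvec u)\le j$, $\sigma(\bvec u)=j+1$ and $\sigma(\bvec u)>j+1$ separately, since the $\sigma$-value can drop under $g$. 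None of that checking appears in your sketch, and the compatibility under $g$ between elements you define through some $\bvec k$ (first coordinate positive) and those you define through $\bvec{n+1}$ (first coordinate zero) is precisely where (iii) and (iv) do their work. As written, therefore, neither inclusion $\rho(\E(\Y))\subseteq\B$ nor $\B\subseteq\rho(\E(\Y))$ is established.
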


\begin{proof}
The case $n=1$ follows from a straightforward calculation. 

Now fix $n\geq2$. 
We 
define a map $t$ 
which we shall show is an isomorphism
from $\E(\Y)$ to  $\alg{B}$. For 
$x\in \E(\Y)$, let
\[
t(x):=(x(\bvec{1}),x(\bvec{2}),\ldots, x(\bvec{n}),x(\bvec{n+1})).
\]

\noindent {\bf Claim 1:} $t(x)\in \B$.
\begin{proof}
If $|x(\bvec{k})|\leq 1$ for all $k\leq n$, then the claim is true.

 Assume now that there exists $k\leq n$ with $|x(\bvec{k})|\ne 1$. 
The tuple  
$\bvec{1}=(1,1,\ldots,1)$ is in the domain of every $f_i$ except $f_1$.  
Since~$x$ preserves all the partial operations, 
this  constrains  
 $x(\bvec{1})$ to lie in $\{-1,1\}$ and so  (i) holds.
 There exists a maximum $j \leq n$ such that 
$|x(\bvec{i})|=1$ for $i \leq j$  and  $|x(\bvec{j+1})|\ne 1$.
Then 
$ 
g^2(x(\bvec{j+1}))= g(x(\bvec{j}))\in \{g(1),g(-1)\}= \{0\}$.
 Since $|x(\bvec{j+1})|\neq 1$, it follows that $x(\bvec{j+1})\in\{-2,2\}$.
If $k$ is such that $j<k< n$  then
$ 
 g^{2}(x(\bvec{k+1}))= g(x(g(\bvec{k+1}))) =g(x(\bvec{k}))$.
Therefore,  by induction,  we can see that $|x(\bvec{k+1})|>|x(\bvec{k})|\geq|x(\bvec{j+1})|=2$. Since 
$g$ is injective when restricted to $\{-n,\ldots, -2,2,\ldots,n\}$  
and it sends  positive (resp.~negative) elements to positive (resp.~negative)
elements
it also follows that  $g(x(\bvec{k+1}))=x(\bvec{k})$.   
We have shown that (ii) and (iii) hold, with~$j$ as above.  Finally, 
 since $g(\bvec{n+1})=g(\bvec{n})$, it follows that $g(x(\bvec{n}))=x(g(\bvec{n}))=x(g(\bvec{n+1}))=g(x(\bvec{n+1}))$. Thus, $t(x)$ satisfies (iv).
 \end{proof}

\noindent  {\bf  Claim 2}:  $t\colon \E(\X) \to \alg{B} $ is an injective homomorphism.

\begin{proof}
By Lemma~\ref{lem:genclaimOdd}, $\{\bvec{1},\ldots,\bvec{n+1}\}$ generates $\Y$, hence any 
$x\in\E(\Y)$ is uniquely determined by $x(\bvec{1}),\ldots,x(\bvec{n+1})$. Therefore $t$ is injective.
 Moreover, $t\colon \E(\Y)\to \B$ is coordinatewise an evaluation map, hence a homomorphism. 
\end{proof}

\noindent  {\bf  Claim 3}: The homomorphism $t$ maps $\E(\Y)$  onto $\B$.  
\begin{proof}

The following function enables  us manipulate signs to  create elements of~$\Y$ from suitable $(n+1)$-tuples: 
 For a real number $r$,  we let   
\[
\sgn r=
\begin{cases}
0 &\mbox{if } r=0,\\
\dfrac{r}{|r|} &\mbox{otherwise}.
\end{cases}
\]
On $\Zed_{2n+1}$, partial endomorphisms commute with $\sgn$.

Let $\bvec{a}=(a_1,\ldots, a_{n+1})\in \B$. 
Here  $\bvec{a}  = (a_1, \ldots, a_{n+1}  ) $  can take  two possible  forms. The first form is 
$
(\pm 1, \ldots, \pm 1,  2, \ldots, n-j, a_{n+1})$.
 Here $j$ can take any value 
for which $1 \leq  j \leq n$, and the choices of sign in the first $j$ coordinates are  arbitrary.
The last coordinate, $a_{n+1}$, lies between~$0$ and~$n-1$,  its value linked to that of $n-j+1$ by condition~(iv).   
The second form is the same, except that
the signs of the last $n+1-j$ coordinates are reversed.
We need to define a morphism $x_{\mathbf{a}}\colon\Y\to \twiddleodd
$ such that $x_{\mathbf{a}}(\bvec{i})=a_{i}$ for $i\in\{1,\ldots, n+1\}$. 
Assume without loss of generality that~$\bvec{a}$ takes the first form; the other case is  handled likewise.
We define
\[
x_{\bvec{a}} (\bvec{u}) : = \begin{cases} 
                        \sgn{\bvec{a}_{\sigma(\bvec{u})} } \cdot \bvec{u}_{n-j+2} & \text{if } \bvec{u} \ne \bvec{n+1}, \\
                                a_{n+1}+ 1  &\text{if } \bvec{u} = \bvec{n+1}.
                        \end{cases}
\]

We first confirm that $x_{\bvec{a}}(\bvec{k}) = a_k$ for each~$k$.
If $k \leq j$ then  
 \[
x_{\bvec{a}} (\bvec{k}) = 
\sgn a_k\cdot
\bvec{k}_{n+2-j}=\sgn a_k \cdot 1= \sgn a_k\cdot |a_k| = a_k,
\]
as required.
If $k>j$ then 
$
\bvec{k}_{n+2-j}=k-j+1$.
Also, $a_k-(k-j-1)=g^{k-j-1}(a_k)= 
a_{j+1}=2$, by  (iii)(b). Hence, $a_k=k-j+1$.
 Therefore $x_{\mathbf{a}}(\bvec{k})=a_k$.
Moreover, 
$
 x_{\mathbf{a}}(\bvec{n+1})=
a_{n+1}$.

It  remains to prove  that $x_{\mathbf{a}}\in \E(\Y)$.
Note that $x_{\mathbf{a}}(\bvec{0})=\sgn{\bvec{a}_{\sigma(\bvec{0})} } \cdot \bvec{0}_{n-j+2} =0$. Therefore $x_{\mathbf{a}}$ preserves $\bvec{0}$.
Now, let $i\in \{0,\ldots,n\}$ and $\mathbf{u}=(u_1,\ldots,u_{n+1})\in \dom f_i$. 
Since $\bvec{n+1}$ is not in the domain of $f_i$, necessarily ${ \mathbf u}\neq \bvec{n+1}$. 
Since $f_i$ is injective, $\sigma({\mathbf{u}})=\sigma({f_i(\mathbf{u})})$ 
and $f_i(\bvec{u}) \neq \bvec{n+1}$.
Hence 
\[
x_{\mathbf{a}}(f_i(\mathbf{u}))
=
\sgn  a_{\sigma({\mathbf u})}\cdot
f_i( u_{n+j-2})   
= f_{i}(\sgn  a_{k_\mathbf{u}}\cdot
 u_{n+j-2})
=f_{i}(x_{\mathbf{a}}(\mathbf{u})).
\]
We now prove that $x_{\bvec{a}}$ preserves~$g$.  
Assume first that $\bvec{u} = \bvec{n+1}$.  Note  that $g(\bvec{n+1})=g(\bvec{n}) $ and that $g(a_n) = g(a_{n+1})$ by (iv).  
 Therefore 
\begin{multline*}
x_{\mathbf a}(g(\mathbf{n+1}))=x_{\mathbf a}(g(\mathbf{n}))
=\sgn a_{n}\cdot g(\bvec{n})_{n+2-j}\\
=x_{\mathbf a}(g(\mathbf{n}))
=g(x_{\mathbf a}(\bvec{n}))=g(a_n)=g(a_{n+1})=g(x_{\mathbf a}(\mathbf{n+1})).
\end{multline*}
Assume now that $\bvec{u} \in \Y \setminus \{\bvec{n+1}\}$.  If $\sigma(\bvec{u}) =  \sigma(g(\bvec{u}))$ then 
  $x_{\bvec{a}}
(g(\bvec{u})) = g(x_{\bvec{a}}(\bvec{u}))$. Suppose $\sigma(\bvec{u}) \ne 
\sigma(g(\bvec{u}))$.  We have three cases to consider.

\noindent \textit{Case 1}:  $\sigma(\bvec{u}) \leq j$.  Since $\sigma(g(\bvec{u}))\leq \sigma (\bvec{u}) \leq j$, we have  
$\sgn \bvec{a}_{\sigma (\bvec{u})} = 0 = \sgn \bvec{a}_{\sigma (g(\bvec{u}))}$.  
Then 
\[
x_{\bvec{a}}(g(\bvec{u}) )= \sgn \bvec{a} _{\sigma g(\bvec{u})} \cdot 
g(u)_{n+2-j}   = 0 = g(0 \cdot \bvec{u}_{n+2-j}) = g(\sigma(\bvec{u} )\cdot
\bvec{u}_{n+2-j}) = g(x_{\bvec{a}} (\bvec{u})).
\]

\noindent \textit{Case 2}:  $\sigma (\bvec{u}) > j+1$.  Then $
\sgn \bvec{a}_{\sigma(\bvec{u})} =   
\sgn  \bvec{a}_{\sigma(g(\bvec{u}))}$.    Again 
$x_{\bvec{a}}(g(\bvec{u})) = g(x_{\bvec{a}}(\bvec{u}))$.

\noindent \textit{Case 3}:  $\sigma(\bvec{u}) = j+1$.  Then $\sigma(g(\bvec{u}))=j$.  Observe  that $\sigma (\bvec{u}) = 1 + \sigma(g(\bvec{u}))$,  which  implies that $0,1 \in \{u_1, \ldots, u_j\}$.   
Note that  
$\bvec{u}_{n+2-j} = \bvec{u}_{n+1 -(j-1)}$.  
Then $0 = \bvec{u}_i$ for $i \leq n+1 - \sigma(\bvec{u}) + 1$ and
$1= \bvec{u}_{n+1 - \sigma(\bvec{u}) + 2}  = 
\bvec{u}_{n+1 - (j+1) + 2} = \bvec{u}_{n+2-j}$.  It follows that $
g(\bvec{u})_{n+2-j} =0$.  
Hence
\[
x_{\bvec{a}}(g(\bvec{u})) = \sgn \bvec{a}_{\sigma(g(\bvec{u}))} \cdot 
g(\bvec{u})_{n+2-j} = 0= 
g(\sgn \bvec{a}_{\sigma(\bvec{u})} \cdot 1) = 
g(\sgn \bvec{a}_{\sigma(\bvec{u})} \cdot \bvec{u}_{n+2-j}) = g(x_{\bvec{a}}(\bvec{u})).\qedhere
\]
\end{proof}

Claims 1--3.~establish that  $\E(\Y) \cong \B$.
\end{proof}

We refer to Section~\ref{sec:sumup}
for more information about~$\B$ and its relationship to 
 $\F_{\SA_{2n+1}}
(n+1)$.

\section{Strong duality: even case} \label{Sec:Duality-SugEven}

For the odd Sugihara quasivarieties  $\SA_{2n+1}$ we were able
to base our piggyback dualities  on just two
`carrier maps', $\delp
$ and $\delm
$, from $\Zed_{2n+1}$ into $\{ 0,1\}$, thanks to the presence of non-trivial
endomorphisms; recall Lemma~\ref{lem:SepOdd}.   
In the even case we have the opposite extreme:  $\End \Zed_{2n} = \{\id_{\Zed_{2n}} \}$.  
 To satisfy the separation condition (i) in Theorem~\ref{Thm:Piggyback} we    
take 
$\Omega = \cat D(\fnt{U}(\Zed_{2n}),\two)\setminus \{\boldsymbol 0, \boldsymbol 1\}$,
where the excluded maps are those taking constant value~$0$ or $1$.  

We  label the elements of $\Omega$ as 
$\beta^-_{n-1}, \ldots, \beta^-_{1};\ \beta 
;\  \beta^+_1, \ldots, \beta_{n-1}^+$, where 
for $a \in \Zed_{2n}$,
\[
\beta^-_i(a) =  1  \Leftrightarrow a \geq -i; 
\qquad  
\beta 
(a) = 1 \Leftrightarrow a > 0;
\qquad 
\beta_i^+(a) = 1 \Leftrightarrow a > i.
\]

As compared with the odd case  we have a proliferation of piggyback relations to 
describe. 
Moreover, the analogues of our exclusion diagrams in Figures~\ref{Fig:Beta} and~\ref{Fig:AlphaBeta-etc}  involve additional possible scenarios;
 see Figure~\ref{Fig:BetaBeta}.
  Lemma~\ref{lem:omega-facts} sets out elementary facts which  simplify our analysis.

\

    
\begin{figure}[h]
\begin{center}
	\begin{tikzpicture}[scale=.76]

\begin{scope}[xshift=-6.5cm]

\node [label=left:{\small  
$(\beta_{i}^{+},\beta_{j}^{+})^{-1}(\leq)$}]   (b) at  (-4,2.7) {};
\node []   (bb) at  (-3.2,1) {};
\draw[thin] (bb) .. controls +(left:1cm) and +(right:1cm) .. (b);

\node 
 (yn) at  (-5.5,2) {\scriptsize$-$};
\node   (ai1) at  (-5.5,1.5) {\scriptsize$-$};
\node 
(yi) at  (-5.5,1.2) {\scriptsize$-$};
\node []   (a0) at  (-5.5,0) {\scriptsize$-$};
\node 
 (y-n) at  (-5.5,-2) {\scriptsize$-$};

\begin{scope}[xshift=1.1mm]
\node  [label=left:{\scriptsize$j\!+\!1$}] (ai1) at  (-5.5,1.5) {};
\node [label=left:{\scriptsize  $j$}]   
(yi) at  (-5.5,1.15) {};
\end{scope}

\draw[thin, dashed] (-5.2,0)--(-.7,0);

\draw[thin] (-5.5,2.2)--(-5.5,-2.5);

\node 
 (xn) at  (-1,-2.5) {\scriptsize$|$};
\node    (x2) at  (-2,-2.5) {\scriptsize$|$};
\node 
   (x2) at  (-2.3,-2.5) {\scriptsize$|$};
\node 
  (x0) at  (-3,-2.5) {\scriptsize$|$};
\node 
 (x-n) at  (-5,-2.5) {\scriptsize$|$};

\begin{scope}[yshift=.7mm]
\node  [label=below:{\scriptsize $i\!+\!1$}]   (x2) at  (-1.85,-2.5) {};
\node [label=below:{\scriptsize  $i$}]
   (x2) at  (-2.4,-2.5) {};
\end{scope}

\draw[thin] (-5.5,-2.5)--(-0.7,-2.5);

\draw[thin, dashed] (-3,-2.4)--(-3,2.2);

\draw[pattern color=gray,pattern=north east lines] (-2.2,-2.2) rectangle (-0.8,1.3);
\draw (-5.2,-2.2) rectangle (-0.8,2.2);

\node [label=below:{\small  (a)}]   (capa) at  (-3,-3) {};
\end{scope}

\begin{scope}[xshift=-5.5cm]
\node [label=left:{\small  
\begin{tabular}{lr}
$(\beta_{i}^{+},\beta_{j}^{+})^{-1}(\leq)\, \cap$\\[-.5ex]
$ 
\ \  \neg
(\beta_{i}^{+},\beta_{j}^{+})^{-1}(\leq)$
\end{tabular}
}] 
  (b1) at  (4,3.1) {};
\node []   (bs1) at  (3.5,3.1) {};
\node []   (bb1)  at  (3.1,-.5) {};   
\draw[thin] (bb1) .. controls +(left:1cm) and +(right:1cm) .. (bs1);

\node 
  (a2) at  (0.5,2) {\scriptsize$-$};
\node   (a2) at  (0.5,1.5) {\scriptsize$-$};
\node 
  (a2) at  (0.5,1.2) {\scriptsize$-$};
\node []   (a0) at  (0.5,0) {\scriptsize$-$};
\node   (a2) at  (0.5,-1.5) {\scriptsize$-$};
\node 
 (a2) at  (0.5,-1.2) {\scriptsize$-$};
\node 
  (a-3) at  (0.5,-2) {\scriptsize$-$};

\draw[thin] (0.5,2.2)--(0.5,-2.5);

\begin{scope}[xshift=1.1mm]
\node [label=left:{\scriptsize  $j$}] 
  (a2) at  (0.5,1.2) {};
\node [label=left:{\scriptsize  $-j$}]  
 (a2) at  (0.5,-1.2) {};
\end{scope}

\node 
  (a2) at  (1,-2.5) {\scriptsize$|$};
\node    (a2) at  (4,-2.5) {\scriptsize$|$};
\node
   (a2) at  (3.7,-2.5) {\scriptsize$|$};
\node 
  (a0) at  (3,-2.5) {\scriptsize$|$};
\node    (a2) at  (2,-2.5) {\scriptsize$|$};
\node 
   (a2) at  (2.3,-2.5) {\scriptsize$|$};
\node 
  (a-3) at  (5,-2.5) {\scriptsize$|$};

\draw[thin] (0.5,-2.5)--(5.2,-2.5);

\begin{scope}[yshift=.7mm]
\node  [label=below:{\scriptsize  $i$ }]
   (a2) at  (3.7,-2.5) {};
\node [label=below:{\scriptsize  $-i$}]
   (a2) at  (2.3,-2.5) {};
\end{scope}

\draw[thin, dashed] (0.7,0)--(5.2,0);
\draw[thin, dashed] (3,2.2)--(3,-2.5);
\draw (0.8,-2.2) rectangle (5.2,2.2);

\draw[pattern color=gray,pattern=north east lines] (3.8,-2.2) rectangle (5.2,1.3);
\draw[pattern color=gray,pattern=north east lines] (0.8,-1.3) rectangle (2.2,2.2);

\node [label=below:{\small  (b)}]   (capb) at  (3,-3) {};
\end{scope} 


\begin{scope}[xshift=1.5cm]
\node [label=left:{\small 
\begin{tabular}{lr}
 $(\beta_{j}^{-},\beta_{i}^{-})^{-1}(\leq)\, \cap$\\[-.5ex] 
$\ \ \ \neg(\beta_{j}^{-},\beta_{i}^{-})^{-1}(\leq)$
\end{tabular}
}]  
 (b1) at  (4.7,3.1) {};
\node (bs1)  at  (4.3,3.1) {}; 
\node []   (bb1) at  (4.9,-.5) {};
\draw[thin] (bb1) .. controls +(left:1cm) and +(right:1cm) .. (bs1);

\node 
 (a2) at  (0.5,2) {\scriptsize$-$};
\node   (a2) at  (0.5,1) {\scriptsize$-$};
\node 
  (a2) at  (0.5,0.7) {\scriptsize$-$};
\node []   (a0) at  (0.5,0) {\scriptsize$-$};
\node   (a2) at  (0.5,-1) {\scriptsize$-$};
\node 
   (a2) at  (0.5,-0.7) {\scriptsize$-$};
\node 
  (a-3) at  (0.5,-2) {\scriptsize$-$};

\draw[thin] (0.5,2.2)--(0.5,-2.5);

\begin{scope}[xshift=1.1mm]
\node [label=left:{\scriptsize  $i$}] 
  (a2) at  (0.5,0.7) {};
\node [label=left:{\scriptsize $-i$}]
  (a2) at  (0.5,-0.7) {};
\end{scope}

\node 
 (a2) at  (1,-2.5) {\scriptsize$|$};
\node    (a2) at  (4.5,-2.5) {\scriptsize$|$};
\node 
  (a2) at  (4.2,-2.5) {\scriptsize$|$};
\node 
 (a0) at  (3,-2.5) {\scriptsize$|$};
\node    (a2) at  (1.5,-2.5) {\scriptsize$|$};
\node 
  (a2) at  (1.8,-2.5) {\scriptsize$|$};
\node 
 (a-3) at  (5,-2.5) {\scriptsize$|$};

\draw[thin] (0.5,-2.5)--(5.2,-2.5);

\begin{scope}[yshift=.7mm]
\node [label=below:{\scriptsize  $j$ }] 
  (a2) at  (4.2,-2.5) {};
\node [label=below:{\scriptsize  $-j$ }] 
  (a2) at  (1.8,-2.5) {};
\end{scope}

\draw[thin, dashed] (0.7,0)--(5.2,0);
\draw[thin, dashed] (3,2.2)--(3,-2.5);
\draw (0.8,-2.2) rectangle (5.2,2.2);

\draw[pattern color=gray,pattern=north east lines] (1.7,-2.2) rectangle (5.2,-0.8);
\draw[pattern color=gray,pattern=north east lines] (0.8,0.8) rectangle (4.3,2.2);

\node [label=below:{\small  (c)}]  
 (capb) at  (3,-3) {};

\end{scope}

	\end{tikzpicture}
\end{center}
\caption{Illustration of proof of Proposition~\ref{prop:newAemr}}\label{Fig:BetaBeta}
\end{figure}

\begin{lem}  \label{lem:omega-facts}  Let 
$S$ be a subalgebra of $\Zed_{2n}^2$.
Let  $i,j\in \{ 1,\ldots , n-1\}$.
 Then the following  hold:
 \begin{itemize}
 \item[\rm (i)] $S \subseteq 
(\beta_i^+,\beta_j^+) ^{-1}(\leq )$
 if and only if 
$S^\smallsmile \subseteq 
(\beta_j^{-},\beta _i^{-})^{-1} (\leq)$. 
\item[\rm (ii)] $S 
\subseteq 
(\beta_i^{\pm},\beta 
) ^{-1}(\leq )$
 if and only if 
$S^\smallsmile \subseteq 
(\beta 
,\beta _i^{\mp})^{-1} (\leq)$. 
\item[\rm (iii)]$S 
\subseteq (\beta_i^+,\beta_j^-) ^{-1}(\leq )$
 if and only if 
$S\subseteq(\beta_i^{+},\beta _j^{+})^{-1} (\leq)$ or $S\subseteq(\beta_i^{-},\beta _j^{-})^{-1} (\leq)$. 
\end{itemize}
\end{lem}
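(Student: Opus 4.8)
The plan is to treat parts (i) and (ii) as routine bookkeeping and to concentrate effort on part (iii), whose forward implication is also immediate but whose converse is where the hypothesis that $S$ is a subalgebra does genuine work.

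For (i) and (ii) I would simply unfold each containment $S \subseteq (\w,\w')^{-1}(\leq)$ as the assertion that $\w(a) \leq \w'(b)$ for every $(a,b) \in S$, rewrite this as a universally quantified implication about integers, take the contrapositive of the inner implication, and then re-index the universal quantifier by substituting $(-a,-b)$ for $(a,b)$ --- which is legitimate because $S$ is closed under $\neg$ and $\neg$ acts coordinatewise, so $(a,b)\mapsto(-a,-b)$ permutes $S$. Carrying this out for $i\in\{1,\ldots,n-1\}$, the predicate "$a>i \Rightarrow b>j$" turns into "$b\geq -j \Rightarrow a\geq -i$" after contraposition and sign-flipping, which is precisely $S^\smallsmile\subseteq(\beta^-_j,\beta^-_i)^{-1}(\leq)$, giving (i). Part (ii) is the same manoeuvre; the only point worth flagging is that $\Zed_{2n}$ has no element $0$, so "$b\leq 0$" and "$b<0$" coincide and $\beta$ behaves symmetrically under $\neg$ (that is, $\beta(-b)=1-\beta(b)$), which is why $\beta^\pm_i$ gets swapped to $\beta^\mp_i$ while $\beta$ stays put.

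For (iii), the forward direction needs no algebra at all: on $\Zed_{2n}$ one has the pointwise inequalities $\beta^+_j\leq\beta^-_j$ and $\beta^+_i\leq\beta^-_i$, whence $(\beta^+_i,\beta^+_j)^{-1}(\leq)\subseteq(\beta^+_i,\beta^-_j)^{-1}(\leq)$ and $(\beta^-_i,\beta^-_j)^{-1}(\leq)\subseteq(\beta^+_i,\beta^-_j)^{-1}(\leq)$. The substantive direction I would prove by contraposition. Observe first that the forbidden rectangle of $(\beta^+_i,\beta^-_j)^{-1}(\leq)$, namely $\{a>i\}\times\{b<-j\}$, is exactly the intersection of those of $(\beta^+_i,\beta^+_j)^{-1}(\leq)$ and $(\beta^-_i,\beta^-_j)^{-1}(\leq)$; so if $S$ meets both of the two larger rectangles I must manufacture a point of $S$ in the smaller one. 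Pick $(a,b)\in S$ with $a>i$, $b\leq j$ and $(c,d)\in S$ with $c\geq -i$, $d<-j$. The right term to form is $\neg(c,d)\to(a,b)\in S$: computing it coordinatewise and using that $-c\leq i<a$ selects the join branch of $\to$ in the first coordinate while $-d>j\geq b$ selects the meet branch in the second, one gets $(c\vee a,\, d\wedge b)$, and here $c\vee a>i$ and $d\wedge b<-j$, so this point lies in the forbidden rectangle for $(\beta^+_i,\beta^-_j)^{-1}(\leq)$, contradicting $S\subseteq(\beta^+_i,\beta^-_j)^{-1}(\leq)$.

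The main obstacle --- really the only non-mechanical choice --- is spotting that one must apply $\neg$ to $(c,d)$ before forming the implication: using $(c,d)\to(a,b)$ directly would leave the first-coordinate implication on the wrong branch of $\to$ when $c$ happens to be large, whereas $-c\leq i$ is guaranteed and forces the good branch. Once the witness term is identified, everything reduces to the case analysis of $\to$ already recorded in Section~\ref{Sec:Sugihara}.
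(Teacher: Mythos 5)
Your proposal is correct. For (i) and (ii) your unfold--contrapose--negate argument is essentially the calculation the paper performs, there phrased via the symmetrised relations $(\omega,\omega')^{-1}(\leq)\cap\neg(\omega,\omega')^{-1}(\leq)$ and the identities $\beta_i^+(a)=1-\beta_i^-(\neg a)$ and $\beta(\neg a)=1-\beta(a)$; your remark about the absence of $0$ in $\Zed_{2n}$ is exactly the reason $\beta$ stays fixed while $\beta_i^{\pm}$ swaps. For (iii) you follow the same overall strategy as the paper: take witnesses $(a,b),(c,d)\in S$ refuting the two containments and use closure under $\neg$ and $\to$ to manufacture an element of $S$ in the forbidden rectangle $\{x>i\}\times\{y<-j\}$. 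But your witness term differs from the paper's, and to your advantage: the paper forms $(\neg(a,b)\wedge(c,d))\to(c,d)$ and asserts it equals $(a,d)$, whereas its second coordinate is $d\to d=(-d)\vee d=-d>j$, so that term actually lands at $(a,-d)$, which still lies in $(\beta_i^+,\beta_j^-)^{-1}(\leq)$ and yields no contradiction --- the printed calculation contains a slip. Your term $\neg(c,d)\to(a,b)$ genuinely evaluates to $(c\vee a,\,d\wedge b)=(a,d)$ (your branch analysis is right, and it needs only $a>i$, $b\leq j$, $c\geq -i$, $d<-j$, not the extra bounds the paper extracts from $S\subseteq(\beta_i^+,\beta_j^-)^{-1}(\leq)$), so your contradiction goes through and in effect repairs the paper's argument. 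You also make the easy direction of (iii), via the pointwise inequalities $\beta_i^+\leq\beta_i^-$ and $\beta_j^+\leq\beta_j^-$, explicit where the paper leaves it unsaid.
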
 

\begin{proof} 
Consider (i).  Let $(a,b) \in \Zed_{2n}^2$.
 Then 
\begin{align*}
(a,b) \in (\beta_i^+,\beta_j^+) ^{-1}(\leq )& \ \cap \ \neg  (\beta_i^+,\beta_j^+) ^{-1}(\leq )\\
    &\Longleftrightarrow \beta_i^+(a)\leq\beta_j^+( b)\mbox{ and }\beta_i^+(\neg a)\leq\beta_j^+(\neg b) \\
    &\Longleftrightarrow  1-\beta_i^-(\neg a)\leq1-\beta_j^-(\neg b)\mbox{ and } 
1-\beta_i^{-}( a)\leq 1-\beta_j^{-}(b)\\
    &\Longleftrightarrow \beta_j^{-}(\neg b) \leq \beta_i^{-}(\neg a) \mbox{ and }\beta_j^{-}(b) \leq \beta_i^{-}( a)\\
    & \Longleftrightarrow (b,a) \in (\beta_j^{-\ell},\beta_i^{-k}) ^{-1}(\leq ) \cap \neg( (\beta_j^{-\ell},\beta_i^{-k}) ^{-1}(\leq ) ).
\end{align*}

Item (ii) follows by the same argument but replacing $\beta_i^{+}$ and $\beta_i^{-}$ ($\beta_j^{+}$ and $\beta_j^{-}$) with $\beta $.

To prove (iii) assume 
that  
there exist $(a,b)$ and $(c,d)$ in~$S$ such that
$(a,b)\notin
(\beta_i^{+},\beta _j^{+})^{-1} (\leq)$ and $(c,d)\notin
(\beta_i^{-},\beta _j^{-})^{-1} (\leq)$. Since $S\subseteq(\beta_i^+,\beta_j^-) ^{-1}(\leq )$, we have  $i<a$, $-j<b\leq j$, $-i<c\leq i$, and $d<-j$. Hence 
\[  
(\neg(a,b)\wedge (c,d)) \to (c,d) = (-a\wedge c, -b\wedge d) \to (c,d)
=(-a,d)\to (c,d) = (a\vee c,d)=(a,d).
\]
However, $(a,d)\notin (\beta_i^+,\beta_j^-) ^{-1}(\leq )$ and this  contradicts the fact that $ S$ is a subalgebra of $\Zed_{2n}^2$.
\end{proof}

Compositions of partial endomorphisms $h$ and relations~$\eq_m$ are central to our characterisation of piggyback relations.  
We define  $h \newcirc \eq_m$ by composition in the expected way.  Lemma~\ref{lem:relprod}(i) relies on the last statement in Proposition~\ref{lem:peclaim}.
In (ii) and (iii)  the relations are those we would get if we replaced~$h$ by $\graph h $ and formed the relational product. 
 However, as we indicated
in Section~\ref{Sec:NatDual}, 
we    
cannot blithely replace a partial operation by its graph.

\begin{lem} \label{lem:relprod}
Let  $h \in \PEZ{2n}$  and let $\eq_m$ 
$(m\geq 1)$
be as defined earlier.
\begin{enumerate}
\item[{\rm (i)}]   $h$ is invertible and $(\graph h)^\smallsmile = \graph h^{-1}$.
\item[{\rm (ii)}]  $h \newcirc \eq_m :=  \{\,  (x,y) \in  \Zed_{2n}^2 \mid 
x \in \dom h \ \& \  h(x) \eq_m y\,\}$ is a subalgebra of $\Zed_{2n}^2$.
\item[{\rm (iii)}]  
  $
\eq_m \newcirc  h:= \{\,  (x,y) \in \Zed_{2n}^2 \mid 
\exists z \bigl(  x \eq_m z  \ \& \ (z \in \dom h \   \& \ y = h(z))
\bigr)\}
$ and  
$\eq_m \newcirc h = h^{-1} \newcirc \eq_m$.
\end{enumerate}
\end{lem}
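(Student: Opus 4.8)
The plan is to prove the three parts in the stated order, with (iii) deduced from (i) and (ii); essentially all the content lies in (ii).

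For (i), I would invoke Proposition~\ref{prop:phomEven}, which tells us that every homomorphism between even Sugihara algebras is injective. As $\dom h$ and $\img h$ are subalgebras of $\Zed_{2n}$ and hence themselves even Sugihara algebras, $h\colon\dom h\to\img h$ is thus a bijective homomorphism, and a bijective homomorphism between algebras is an isomorphism, so its inverse $h^{-1}\colon\img h\to\dom h$ is again an isomorphism; in particular $h^{-1}\in\PEZ{2n}$. The equality $(\graph h)^\smallsmile=\graph h^{-1}$ then falls out of the definitions: $(y,x)\in(\graph h)^\smallsmile$ iff $x\in\dom h$ and $y=h(x)$, iff $y\in\img h$ and $x=h^{-1}(y)$, iff $(y,x)\in\graph h^{-1}$.

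For (ii), I would prove closure directly, combining three ingredients: $\dom h$ is a subalgebra of $\Zed_{2n}$; $h$ is a homomorphism; and $\eq_m$ is a \emph{congruence} of $\Zed_{2n}$ (Proposition~\ref{Prop:Congruences}), hence compatible with every fundamental operation. Given $(x_1,y_1),\ldots,(x_r,y_r)\in h\newcirc\eq_m$ and a fundamental operation $\omega$ of arity $r$ (with $r=1$ covering $\neg$), we have $\omega(x_1,\ldots,x_r)\in\dom h$ and
\[
h(\omega(x_1,\ldots,x_r))=\omega(h(x_1),\ldots,h(x_r))\eq_m\omega(y_1,\ldots,y_r),
\]
so $(\omega(x_1,\ldots,x_r),\omega(y_1,\ldots,y_r))\in h\newcirc\eq_m$; since $\dom h\neq\emptyset$ the set $h\newcirc\eq_m$ is non-empty, and so is a subalgebra of $\Zed_{2n}^2$. (Alternatively one may realise $h\newcirc\eq_m$ as the image, under the homomorphic projection $\Zed_{2n}^3\to\Zed_{2n}^2$, $(u,v,w)\mapsto(u,w)$, of the subalgebra $(\graph h\times\Zed_{2n})\cap(\Zed_{2n}\times\eq_m)$ of $\Zed_{2n}^3$, and use that homomorphic images of subalgebras are subalgebras.)

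For (iii), I would unwind the defining condition of $\eq_m\newcirc h$ and feed in (i): the clause ``$z\in\dom h$ and $y=h(z)$'' is equivalent to ``$y\in\dom h^{-1}$ and $z=h^{-1}(y)$''. Substituting this, and using that $\eq_m$ is symmetric, recasts $\eq_m\newcirc h$ in exactly the form that (ii) attaches to $h^{-1}\newcirc\eq_m$ with $h^{-1}$ in the role of $h$; since $h^{-1}\in\PEZ{2n}$ by (i), part (ii) applied to $h^{-1}$ then also shows that $\eq_m\newcirc h$ is a subalgebra of $\Zed_{2n}^2$. I do not anticipate a real obstacle: the only step calling for care is (ii), where the computation goes through precisely because $\eq_m$ is a congruence rather than merely an algebraic relation, so the write-up should invoke that fact explicitly; parts (i) and (iii) are then straightforward bookkeeping with the definitions.
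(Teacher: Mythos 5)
The paper records this lemma without proof (it is treated as routine, with only the preceding remark that part (i) rests on the invertibility statement at the end of Proposition~\ref{lem:peclaim}), so the comparison here is really a question of correctness. Your parts (i) and (ii) are sound: for (i), routing the invertibility through Proposition~\ref{prop:phomEven} (plus Proposition~\ref{prop:subalg}) is just as good as the paper's pointer to Proposition~\ref{lem:peclaim}, and in (ii) you rightly isolate the key point that $\eq_m$ is a \emph{congruence} (Proposition~\ref{Prop:Congruences}), not merely a subalgebra of the square; either of your two write-ups of (ii) works.

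Part (iii), however, contains a genuine slip, and it sits exactly where the printed statement is itself off by a converse. Carrying out your substitution carefully: replacing ``$z\in\dom h$ and $y=h(z)$'' by ``$y\in\dom h^{-1}$ and $z=h^{-1}(y)$'' and using symmetry of $\eq_m$ turns $\eq_m\newcirc h$ into $\{\,(x,y)\mid y\in\dom h^{-1}\ \&\ h^{-1}(y)\eq_m x\,\}$. This is \emph{not} ``exactly the form that (ii) attaches to $h^{-1}\newcirc\eq_m$'': the two coordinates are transposed, so what your computation actually establishes is $\eq_m\newcirc h=(h^{-1}\newcirc\eq_m)^\smallsmile$, equivalently $(\eq_m\newcirc h)^\smallsmile=h^{-1}\newcirc\eq_m$. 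The unconversed equality fails in general: in $\Zed_4$ take $m=1$ and $h$ with $\dom h=\{-1,1\}$, $h(\pm1)=\pm2$; then $\eq_1\newcirc h=\{-1,1\}\times\{-2,2\}$ while $h^{-1}\newcirc\eq_1=\{-2,2\}\times\{-1,1\}$. So the identity in (iii) must be read with a converse inserted (and it is this converse form that is actually used later, in Case~2 of the proof of Proposition~\ref{prop:newAemr}). Your further conclusion that $\eq_m\newcirc h$ is a subalgebra of $\Zed_{2n}^2$ survives, since the converse of a subalgebra of $\Zed_{2n}^2$ is again one, but the last step of your (iii) asserts an equality that does not follow from---and is in fact contradicted by---your own manipulation; a correct write-up should state and prove the converse-adjusted identity.
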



\begin{prop}  \label{prop:newAemr}  
  Let  $S$ be  a given subalgebra of  $\Zed_{2n}$ which is maximal in $(\w,\w')^{-1}(\leq)$,
where $\w,\w' \in \Omega$.  
 The possible forms for $S$ are indicated below.

\newcommand{\eqhm}{ h   \newcirc  \eq_m}
\newcommand{\eqmh} {\eq_m \newcirc  h}
\newcommand{\grh}{\graph h}

\begin{center}
\begin{tabular} {l|ccc}
                      &  $ \phantom{mm}\beta_k^-$ \phantom{mm}&  $\phantom{mm}\beta 
\phantom{mmm}$   & \ $\phantom{mm}\beta_\ell^+ $
\phantom{mm}
 \bigstrut[b]  \\
\hline
$\beta_i^-  $  &    $\eqmh$  &     $\grh$       &   $\grh$  \bigstrut[t] \\ 
$\beta 
$  &      $\eqmh$    &     $\grh$   & $ \grh$\\ 
$\beta_j^+$   &  
  $\eqmh$ or  
 $\eqhm$ 
    & $\eqhm$  &   $\eqhm$
  \end{tabular}
\end{center}
 \noindent Here
 $i,j,k,\ell \in \{1,\ldots,n-1\}$.  
The row  label specifies the choice of $\omega$, the column  label specifies $\omega'$;   $h$ denotes some element of $\PEZ{2n} $ and 
$m \in \{1,\ldots,n-1\}$, 
where  both~$h$ and~$m$ will depend on~$S$ and on $\omega$ and $\omega'$.  
\end{prop}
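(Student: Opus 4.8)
The proof will run parallel to that of Proposition~\ref{prop:PrelOdd}, but with the bookkeeping complicated by the larger set $\Omega$. For each admissible pair $(\omega,\omega')$ the first move is to replace $S$ by $S\cap\neg S$, that is, to intersect the exclusion region $(\omega,\omega')^{-1}(\le)$ with its image under $\neg$; this produces one of the reduced pictures of Figure~\ref{Fig:BetaBeta}, after which one exploits closure of $S$ under $\wedge$, $\vee$ and, crucially, $\to$. Before doing any real work I would trim the nine entries of the table using Lemma~\ref{lem:omega-facts}. Part~(iii) disposes of the genuinely mixed entry $(\beta_j^+,\beta_k^-)$ by splitting it into the two pure-sign entries $(\beta_j^+,\beta_k^+)$ and $(\beta_j^-,\beta_k^-)$. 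Parts~(i) and~(ii), applied with $S\mapsto S^\smallsmile$, identify the entry for $(\omega,\omega')$ with the one obtained by transposing the pair and interchanging the sign superscripts $+$ and $-$; and by Lemma~\ref{lem:relprod}(i),(iii) the collection of admissible forms is closed under $S\mapsto S^\smallsmile$ (a graph has a graph for converse, and the forms $\eq_m\newcirc h$ and $h\newcirc\eq_m$ are exchanged, after replacing $h$ by $h^{-1}$). After these reductions it is enough to establish five entries, say those in the first two columns of the first two rows of the table together with the entry $(\beta_i^-,\beta_\ell^+)$: three of them have the form $\graph h$ and two the form $\eq_m\newcirc h$.

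For the three entries of graph type --- $(\beta_i^-,\beta)$, $(\beta,\beta)$ and $(\beta_i^-,\beta_\ell^+)$ --- the intersected exclusion regions (Figure~\ref{Fig:BetaBeta}(a),(b)) are narrow enough that the argument of Proposition~\ref{prop:PrelOdd}(i) transfers almost word for word: if $(x,y)$ and $(x,y')$ both lie in $S$ with $y\ne y'$, then after using $\neg$ to normalise signs and $\wedge,\vee$ to order $y$ and $y'$, one of $(x,y)\to(x,y')$ and $(x,y')\to(x,y)$ is forced into a forbidden rectangle, a contradiction. Hence $S$ is the graph of a partial map, and being a subalgebra of $\Zed_{2n}^2$ it is the graph of a partial endomorphism; maximality of $S$ only serves to fix the domain of that endomorphism as large as the region allows.

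The substance of the proof lies in the two entries of the form $\eq_m\newcirc h$, namely $(\beta_i^-,\beta_k^-)$ and $(\beta,\beta_k^-)$ (and, via the converse symmetry, their transposes $h\newcirc\eq_m$). Here $S$ need not be a graph: the intersected region (Figure~\ref{Fig:BetaBeta}(c)) leaves a band about the origin inside which the vertical fibres of $S$ may be enlarged. My plan is: (a) let $h$ be the ``graph part'' of $S$, the partial map sending $x$ to the element of largest modulus in the fibre of $S$ over $x$ --- well-definedness of this element is again the $\to$-argument, applied to fibres not wholly contained in the band; (b) let $m$ be the largest modulus occurring among points of the second projection whose fibre is a non-singleton; (c) use closure under $\to$ and $\wedge$ to propagate ``fattening'' down a chain, and closure under $\vee$ to propagate it across signs, and thereby show that the fibre over $x$ is $\{h(x)\}$ when $|h(x)|>m$ and is the whole of $\Zed_{2m}$ when $|h(x)|\le m$; this is exactly the assertion $S=\eq_m\newcirc h$. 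That $h$ is then a partial endomorphism and $\eq_m$ a congruence comes for free because $S$ is a subalgebra, and maximality of $S$ pins down $m$ and $\dom h$ in terms of $i$ (and $k$).

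The hard part will be step~(c): showing that a subalgebra of $\Zed_{2n}^2$ confined to the exclusion region must be a \emph{uniform} $\eq_m$-thickening of a graph, rather than some irregular thickening, is where the closure conditions have to be pushed hardest and where the various index constraints ($i$, $k$, $\ell$, $m$, and the position of the domain) interact. The graph-type entries are routine transcriptions of the odd case, and the case reductions through Lemma~\ref{lem:omega-facts} are lighter bookkeeping, so essentially all the new ideas are concentrated in the $\eq_m\newcirc h$ analysis.
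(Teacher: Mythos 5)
Your front-end reductions are fine and match the paper's route: Lemma~\ref{lem:omega-facts}(iii) splits the genuinely mixed cell, parts (i)--(ii) together with the converse behaviour recorded in Lemma~\ref{lem:relprod} transfer entries across the diagonal, and the three graph-type cells you retain do succumb to the fibre-uniqueness argument of Proposition~\ref{prop:PrelOdd}(i), since each of the corresponding regions is a union of two ``opposite-quadrant'' rectangles. The genuine gap is in step (c), exactly the part you flag as the crux: the normal form you set out to prove there is not the relation $\eq_m\newcirc h$ of the table. With the paper's conventions (Lemma~\ref{lem:relprod}), $\eq_m\newcirc h$ thickens the \emph{first} coordinate: its vertical fibres over points of $\Zed_{2m}$ all equal the fixed set $h(\Zed_{2m}\cap\dom h)$, and only the fibres over $|x|>m$ are singletons. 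The fibre pattern you describe --- fibre equal to the whole $\eq_m$-class of $h(x)$ --- is the pattern of $h\newcirc\eq_m$, the form belonging to the $\beta_j^+$ rows; it cannot be the answer in the cells $(\beta_i^-,\beta_k^-)$ and $(\beta,\beta_k^-)$, because there the column map $\beta_k^-$ caps every vertical fibre by $\Zed_{2k}$, while the thickening measured by $\eq_m$ is horizontal and may be much larger than $k$.

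Concretely, take $k<n$ and $S=\Zed_{2n}\times\Zed_{2k}$. This is a subalgebra contained in $(\beta_i^-,\beta_k^-)^{-1}(\leq)$, and it is maximal: if $(a_0,b_0)$ with $b_0>k$ (so $a_0>i$, after using $\neg$) were added, then $(a_0,b_0)\to(a_0,k)=(a_0,-b_0)$ already leaves the region. Its correct representation is $\eq_n\newcirc\,\id_{\Zed_{2k}}$: horizontal extent $n$, constant vertical fibre $\Zed_{2k}$. Your prescriptions produce the wrong data for it: your $m$ (largest modulus of a second-projection point with non-singleton fibre) equals $k$, and no $h\in\PEZ{2n}$ satisfies $\eq_k\newcirc h=S$ (fibres over $|x|>k$ would then be singletons) nor $h\newcirc\eq_k=S$ (that would require a total non-injective endomorphism of $\Zed_{2n}$, impossible by Proposition~\ref{prop:phomEven}); moreover ``the element of largest modulus in the fibre'' is not well defined on a thick fibre such as $\Zed_{2k}$, so your $h$ is not defined either. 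Thus the propagation argument of (c) is aimed at a false target and cannot close. The repair is essentially the paper's Case~1: measure the thickening on the first coordinate (its parameter can be as large as $n$), prove via the $\to/\neg$ argument that each $b$ with $|b|>k$ has a unique partner so that the part of $S$ outside $\Zed_{2n}\times\Zed_{2k}$ is single-valued, assemble $h$ from that graph part together with the identity on $\Zed_{2k}$, and only then invoke maximality to conclude $S=\eq_m\newcirc h$.
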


\begin{proof}

\noindent\textit{Case  1:}\ 
Consider the choices  
 $\omega = \beta_l^- $ and $\omega' =
\beta_k^-$.  
Let  
\[
s=\max\{\, r \mid (r,b)\in S \implies -r\leq b\leq r\, \}.
\] 
Observe that necessarily $s\geq i$.

Suppose first that  $s=n$.  Then $S\subseteq \Zed_{2n}\times \Zed_{2k}\subseteq (\beta^-_{i},\beta^-_k)^{-1}(\leq)$.
 In this case let $m=n$ and $h=\id _{\Zed_{2k}}$, and then 
\[ 
\eq_m \newcirc  h= (\Zed_{2n}\times \Zed_{2n})\newcirc  {\id }_{\Zed_{2k}}
 = \Zed_{2n}\times \Zed_{2k}.
\]
Since $\Zed_{2n}\times \Zed_{2k}\subseteq (\beta^-_{i},\beta^-_k)^{-1}(\leq)$ and $S$ is maximal we deduce  that  $S=\Zed_{2n}\times \Zed_{2k}$.

Now suppose $s<n$.  Let  $(a,b)\in S$ be such that $b\notin \Zed_{2k}$. Assume without loss of generality that $b<-k$.  Then $\beta^{-}_i(a)\leq \beta^{-}_k(b)=0$, that is, $a<-i$. 

If there exists $a'\in\Zed_{2n}$ such that $(a',b)\in S$ and $a<a'$, then 
\[
(a,b)\to (a',b)= (a\to a', b\to b))= (-a \vee a',b).
\]
    However $\beta^-_i(-a\vee a')=1$ and $\beta^{-}_k(b) = 0$, that is, $(a,b)\to (a',b)\notin S$. We arrive at a similar contradiction if we assume $a'<a$. Hence $a$ is the unique element such that $(a,b)\in S$.
Therefore,  for each $b\notin \Zed_{2k}$,  if there exists $a\in \Zed_{2n}$ for which  $(a,b)\in S$, then such an $a$ is unique and $a\notin \Zed_{2k}$. 
Now let $h$ be the partial map defined as follows:
\[
h(c) = \begin{cases}
c & \mbox{if }c\in \Zed_{2k},\\
a & \mbox{if }c\notin \Zed_{2k} \mbox{ and }(a,c)\in S,  \\
\mbox{undefined}& \mbox{otherwise}.
\end{cases}
\]
Since $S$ is a subalgebra, 
$h$ is indeed a partial endomorphism.
Moreover 
\begin{align*}
\eq_s \newcirc  h
&= 
\{\,(a,c)\in\Zed_{2n}^2\mid \exists b\in \Zed_{2k}
\left( ( a\eq_s b\ \& \ 
c=b )   
\text{ or } \exists 
b\notin \Zed_{2k} (
a \eq_s b \ \& \
c = h(b)
\right)
\, \}\\
 &=\bigl (\Zed_{2s}\times \Zed_{2k} \bigr)\cup  \{\, (a,c)\in\Zed_{2n}^2\mid \exists b\notin \Zed_{2k}
\left( a=b\ \& \ 
(b,c)\in S\right) \, \}\\
&= \bigl(\Zed_{2s}\times \Zed_{2k} \bigr)\cup  \{\,(a,c)\in S\mid a\notin \Zed_{2s}\, \}.  
\end{align*}
Then
$S\subseteq \eq_s \newcirc  h\subseteq   (\beta^-_{i},\beta^-_k)^{-1}(\leq)$. 
Maximality of $S$ now implies that  $S=\eq_s\newcirc  h$.


\medskip

\noindent\textit{Case  2:} \ 
Here 
we treat the choices
$\w = \beta_j^+$ and $\w' = \beta_\ell^+$.
 By Lemma~\ref{lem:omega-facts}, $S^\smallsmile\subseteq  
 (\beta^-_{\ell^-},\beta^-_j)^{-1}(\leq)$. 
We now apply Case~1 to $S^\smallsmile$ and make use of 
Lemma~\ref{lem:relprod}.

\medskip

\noindent\textit{Case 3.}\ 
The case in which $\w= \beta_j^+$ and $\w'=\beta_k^-$ can now be handled by
appealing to 
 Lemma~\ref{lem:omega-facts}(iii). 
 

\medskip

\noindent\textit{Case 4.}
The proof  for the  case
$\w = \beta_i^-$ and $\w' = \beta_\ell^+$
follows the same 
lines as that for  Case 2, but is simpler:  consider the converse  relation $S^\smallsmile$ and note that $(\graph h)^\smallsmile = \graph h^{-1}$ for any
$h \in \PEZ{2n}$.

\medskip

\noindent\textit{ Residual cases:} \   It remains to consider the cases in which one or both 
of $\w$ and  $\w'$ is $\beta
$.

\begin{enumerate}[(a)]

\item 
$\w = \beta 
$ and $\w'= \beta_i^-$:  proceed, {\it mutatis mutandis},  
 as in Case 1.  
\item 
 $\w = \beta _j^+$ and $\w'= \beta 
$: make use of Case (a) and 
Lemma~\ref{lem:omega-facts}(ii).    

\item 
 $\w=\w'=\beta 
$: proceed, \textit{mutatis mutandis}, as in Case~3.

\item
$\w = \beta_i^- $ and $\w' = \beta 
$:  
argue as  
for Case 3.

\item $\w = \beta 
$ and $\w'=\beta_\ell^+$: 
appeal to 
the result from Case (c) and Lemma~\ref{lem:omega-facts}(ii).
\end{enumerate}

This completes our characterisation of the piggyback relations.  
\end{proof}

We now present our strong duality theorem for the even case.
Our strategy, as in  
the proof of  Theorem~\ref{Thm:SugDualityOdd}, is to start from the alter ego that the piggyback theorem supplies and then to
adjust that alter ego to arrive at the one 
we want. 
This time 
we need to take advantage of the restriction in 
Theorem~\ref{Thm:Piggyback}(ii) to \emph{maximal} subalgebras of $(\w,\w')^{-1}(\leq)$.

\begin{thm}\label{Thm:SugDualityEven}
For $n= 
1,2,\ldots$,
 the topological structure 
\[
\twiddleeven 
=  
(\Zed_{2n}
;  f_2, \ldots, f_n, g,   \eq_1,   
\ldots, \eq_{n-1}, \Tp)
\]
 is an alter ego for the algebra
$\Zed_{2n}$ which 
 yields a strong 
duality on $\SA_{2n}=\ISP(\Zed_{2n})$.
\end{thm}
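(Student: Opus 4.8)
The plan is to obtain $\twiddleeven$ by trimming the alter ego produced by the Piggyback Strong Duality Theorem (Theorem~\ref{Thm:Piggyback}) applied to $\M=\Zed_{2n}$, exactly as in the proof of Theorem~\ref{Thm:SugDualityOdd} for the odd case, the trimming being licensed by the $\MT$-Shift Strong Duality Lemma (Lemma~\ref{Lem:Shift}).

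First I would verify the four hypotheses of Theorem~\ref{Thm:Piggyback}. Every non-trivial subalgebra of $\Zed_{2n}$ is subdirectly irreducible by Proposition~\ref{IrrIdx}. For the separation condition~(i), Proposition~\ref{prop:phomEven} gives $\End(\Zed_{2n})=\{\id_{\Zed_{2n}}\}$, so the only admissible composite endomorphism is the identity; it therefore suffices that $\Omega$ separate the points of $\Zed_{2n}$, and this holds since $\Omega=\cat D(\fnt{U}(\Zed_{2n}),\two)\setminus\{\boldsymbol 0,\boldsymbol 1\}$ consists of all non-constant lattice homomorphisms from the (distributive) lattice $\fnt{U}(\Zed_{2n})$ to $\two$. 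Condition~(ii) — the description of the subalgebras of $\Zed_{2n}^2$ maximal in some $(\w,\w')^{-1}(\leq)$ — is precisely the content of Proposition~\ref{prop:newAemr}: every such relation has one of the forms $\graph h$, $h\newcirc\eq_m$ or $\eq_m\newcirc h$ with $h\in\PEZ{2n}$ and $m\in\{1,\ldots,n-1\}$. Condition~(iii) simply takes $G\cup H=\PEZ{2n}$, and condition~(iv) is vacuous because $\Zed_{2n}$ has no one-element subalgebra (a singleton subalgebra $\{a\}$ would require $\neg a=a$, forcing $a=0\notin\Zed_{2n}$). Hence Theorem~\ref{Thm:Piggyback} yields an alter ego $\MT_0=(\Zed_{2n};\PEZ{2n},R_0,\Tp)$, with $R_0$ the set of maximal piggyback relations just described, that strongly dualises $\SA_{2n}$.

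It then remains to cut $\MT_0$ down to $\twiddleeven$ in three steps, each an application of Lemma~\ref{Lem:Shift}. Step~1: adjoin $\eq_1,\ldots,\eq_{n-1}$ to the relation set — each $\eq_m$ is a congruence of $\Zed_{2n}$ by Proposition~\ref{Prop:Congruences}, hence a subalgebra of $\Zed_{2n}^2$ — so by Lemma~\ref{Lem:Shift}(a) the enlarged alter ego still strongly dualises. Step~2: by Proposition~\ref{prop:genpe} the monoid $\PEZ{2n}$ is generated by $f_2,\ldots,f_n,g$, so Lemma~\ref{Lem:Shift}(b) lets us discard every partial endomorphism except these generators while retaining a strong duality. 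Step~3: delete every relation in $R_0$. By Proposition~\ref{prop:newAemr} each such relation is $\graph h$, $h\newcirc\eq_m$ or $\eq_m\newcirc h$ where, by Proposition~\ref{prop:genpe}, $h$ is a composite of the retained operations $f_2,\ldots,f_n,g$; the graph of such a composite is entailed, being the relational product of the (entailed) graphs of those operations, and $h\newcirc\eq_m$ and $\eq_m\newcirc h$ are then relational products of $\graph h$ with the retained congruence $\eq_m$, hence also entailed (see \cite[Chapter~2, Section~3]{CD98}). Consequently $\twiddleeven=(\Zed_{2n};f_2,\ldots,f_n,g,\eq_1,\ldots,\eq_{n-1},\Tp)$ still yields a duality on $\SA_{2n}$, and since it is obtained from the strongly dualising alter ego of Step~2 by deleting relations, the final clause of Lemma~\ref{Lem:Shift} upgrades this to a strong duality.

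The crux is Step~3, and behind it Proposition~\ref{prop:newAemr}: it is this classification of the piggyback relations that lets us recognise each of them as assembled, by relational product and converse, from the partial endomorphisms $f_2,\ldots,f_n,g$ and the congruences $\eq_1,\ldots,\eq_{n-1}$. Unlike the odd case, not all relations can be jettisoned — a relation of the form $\eq_m\newcirc h$ is not the graph of a partial map, so the congruences must remain in the alter ego — and the relational composites involved are genuine relations only because, as flagged before Lemma~\ref{lem:relprod}, one may not replace a partial operation by its graph when building the dual category; Lemma~\ref{lem:relprod}(ii),(iii) confirm that $h\newcirc\eq_m$ and $\eq_m\newcirc h$ really are subalgebras of $\Zed_{2n}^2$, as an alter ego requires.
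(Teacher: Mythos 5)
Your proposal is correct and follows essentially the same route as the paper: apply the Piggyback Strong Duality Theorem with $\Omega$ the non-constant lattice homomorphisms, use Proposition~\ref{prop:newAemr} (together with Lemma~\ref{lem:relprod}) to see that every piggyback relation is entailed by $\PEZ{2n}\cup\{\eq_1,\ldots,\eq_{n-1}\}$, and then trim to the generating set $f_2,\ldots,f_n,g$ via Lemma~\ref{Lem:Shift}, whose final clause restores strongness after the relations are deleted. The only differences are cosmetic --- you delete the superfluous partial endomorphisms before rather than after the piggyback relations, and you spell out conditions (i) and (iv) of Theorem~\ref{Thm:Piggyback} a little more explicitly --- though when discarding $\eq_m\newcirc h$ it is cleaner to say, as the paper implicitly does, that it equals $h^{-1}\newcirc\eq_m$ with $h^{-1}\in\PEZ{2n}$ (Lemma~\ref{lem:relprod}(iii)), so that it is an action-by-partial-endomorphism construct rather than a bare relational product.
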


\begin{proof} 
In applying the Piggyback Strong 
Duality Theorem we take $\Omega =
\{\beta 
,  \beta_1^\pm,  \ldots, \beta_{n-1}^\pm\}$,  
take $G\cup H$ to be the 
entire monoid of partial endomorphisms and let $R$ be the set of  piggyback relations 
specified in condition (ii) of the theorem.
Observe that since $\Zed_{2n}$ does not have one element subalgebras. $K=\emptyset$. 
We now add  to $R$ the relations 
$\eq_1, \ldots, \eq_{n-1}$ and the graphs of all partial endomorphisms,   to form a set $R'$. The 
$\MT$-Shift Strong Duality Lemma tells us 
we   still have   a strong duality.
Now we delete redundant relations, again calling on the $\MT$-Shift Strong Duality Lemma.  By  Proposition~\ref{prop:newAemr},  the set $\PEZ{2n} \cup
\{ \eq_1, \ldots, \eq_{n-1}\}$ entails all the relations in $R'$: converse, graph, partial endomorphism 
action are all admissible constructs.  
Hence this set supplies  a strongly dualising alter ego.
Finally, we may delete all partial endomorphisms except those in the generating set 
$\{f_2,\ldots, f_{n-1},g\}$ for $\PEZ{2n}$. 
\end{proof}

\section{Test Spaces Method for Sugihara algebras: even case}\label{Sec:TSM-SugEven}

We now apply the Test Spaces algorithm to $\SA_{2n}$.  We work with $n$ fixed and $n \geq 1$.
For Step~0, we  employ the strong duality set up in Theorem~\ref{Thm:SugDualityEven}.

\begin{prop}[{\bf Step 1}]  \label{Prop:dualZ2n}  
 The dual space $\D(\Zed_{2n})$ has universe  $\{ \id_{\Zed_{2n}}\}$.   In this structure, any  partial endomorphism acts trivially and any relation $\eq_m$
acts as  equality.   
\end{prop}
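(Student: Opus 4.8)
The plan is to read off $\D(\Zed_{2n})=\SA_{2n}(\Zed_{2n},\Zed_{2n})$ directly from its definition and then check how the structure of the alter ego $\twiddleeven$ lifts to it, exactly as was done for the odd case in Proposition~\ref{Prop:dualZ2n1}.

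First I would determine the underlying set. An element of $\D(\Zed_{2n})$ is a homomorphism from $\Zed_{2n}$ into the generating algebra $\Zed_{2n}$, that is, an endomorphism of $\Zed_{2n}$. By the final statement of Proposition~\ref{prop:phomEven} the only such endomorphism is $\id_{\Zed_{2n}}$, so the universe of $\D(\Zed_{2n})$ is the singleton $\{\id_{\Zed_{2n}}\}$.

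Next I would check the operations and relations. The alter ego $\twiddleeven$ carries the partial endomorphisms $f_2,\dots,f_n,g$ and the relations $\eq_1,\dots,\eq_{n-1}$ (and $K=\emptyset$, since $\Zed_{2n}$ has no one-element subalgebra). Recall that a partial endomorphism $e$ lifts to the dual space so as to be defined at a homomorphism $h$ exactly when $\img h\subseteq\dom e$, and then sends $h$ to $e\newcirc h$. Here $\img\id_{\Zed_{2n}}=\Zed_{2n}$, whereas $\dom f_i=\Zed_{2n}\setminus\{i,-i\}$ and $\dom g=\Zed_{2n}\setminus\{1,-1\}$ are proper subalgebras of $\Zed_{2n}$ by Proposition~\ref{prop:subalg}(i); more generally any non-total partial endomorphism of $\Zed_{2n}$ has proper domain. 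Hence no non-total partial endomorphism is defined at $\id_{\Zed_{2n}}$, so it acts as the empty map, while the one total endomorphism, $\id_{\Zed_{2n}}$ itself, fixes $\id_{\Zed_{2n}}$; in either case the action on the one-point space $\D(\Zed_{2n})$ is trivial. As for the relations, each $\eq_m$ lifts to the pointwise relation on $\D(\Zed_{2n})\subseteq\twiddleeven^{\Zed_{2n}}$, so $(h,h')\in\eq_m$ iff $(h(a),h'(a))\in\eq_m$ for all $a\in\Zed_{2n}$; with $h=h'=\id_{\Zed_{2n}}$ this holds by reflexivity of $\eq_m$, so on the singleton $\D(\Zed_{2n})$ the relation $\eq_m$ is $\{(\id_{\Zed_{2n}},\id_{\Zed_{2n}})\}$, i.e.\ equality.

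I do not expect any genuine obstacle: the proposition is an immediate consequence of Proposition~\ref{prop:phomEven} together with the routine description of how partial operations and relations are lifted to natural dual spaces. The only point that needs a moment's attention is that $\img\id_{\Zed_{2n}}$ is all of $\Zed_{2n}$, which forces every non-total partial endomorphism to be undefined at the unique point of $\D(\Zed_{2n})$.
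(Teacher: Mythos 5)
Your argument is correct and follows essentially the same route as the paper's proof: identify the universe as $\End(\Zed_{2n})=\{\id_{\Zed_{2n}}\}$ via Proposition~\ref{prop:phomEven}, then observe that surjectivity of $\id_{\Zed_{2n}}$ forces every non-total partial endomorphism to lift to the empty map, and that each $\eq_m$ lifts pointwise to equality on the singleton. No gaps.
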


\begin{proof}  
  The universe of $\D(\M)$ is simply $\End \M$, by definition of the functor~$\D$. 
  Proposition~\ref{prop:phomEven} shows that  $\Zed_{2n}$ has no endomorphisms other than the identity.  
The lifting of any  partial endomorphism
acts  on $\id_{\Zed_{2n}}$ by composition,  and is empty unless
the composition is defined.    Since $\id_{\Zed_{2n}}$ is surjective, we deduce 
that the lifting to  $\D(\Zed_{2n})$ of any non-total partial endomorphism is the empty map.  Arguing similarly,  the 
lifting of each $\eq_m$  is the diagonal relation.  
\end{proof}


We now put forward a candidate $(\Y,\mu, \nu)$ for the TS-configuration required in Step~2.
  The definition  
 is very similar to the one used for the odd case in Section~\ref{Sec:TSM-SugOdd},
the key difference being that elements with zero coordinates cannot  now appear.

A  TS-configuration
 for $\SA_{2n}$ will be   a set of $n$-tuples of elements of $\Zed_{2n}$
to be regarded as a substructure of $\twiddleeven 
^{\!\!n}$ and 
into which we need  to embed a copy of $\D(\Zed_{2n})$.
Define 
   \[
   \Y 
 = \{(a_1,\ldots,a_n)
\mid \exists\,  j \geq 1
 [\forall i\leq j (0 < a_i= a_j) 
\mbox{ and }\forall k\geq j (a_k < a_{k+1})]\}.
\]
Define $\nu \colon \D(\Zed_{2n}) \to \Y$ by $\nu({\id }_{\Zed_{2n}})=(1,2,\ldots,n)$.

Define $\mu$ to be the map assigning  to each  $(a_1,\ldots,a_n)
 \in \twiddleeven 
^{\!\!n}$
the unique element of $\Y$ whose set of coordinates is $\{|a_1|,\ldots,|a_{n}|\}$. Thus, 
if $\{|a_1|,\ldots,|a_n|\}=\{c_1,\ldots,c_i\}$ with $0 < c_1<c_2<\cdots<c_r$, 
then $\mu(a_1,\ldots,a_{n+1})=(c_1,\ldots,c_1,c_2,\ldots,c_r)$.

\begin{prop}[{\bf Step 2}] \label{Prop:X2n-as-TSEven}
 The triple 
$ (\Y,\mu,\nu)$, defined as above,
 is a $\text{\rm TS}$-configuration.
\end{prop}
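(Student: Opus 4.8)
The plan is to mirror the proof of the odd-case counterpart, Proposition~\ref{Prop:X2n1-as-TSOdd}, the one genuinely new feature being that the even alter ego carries the binary relations $\eq_1,\ldots,\eq_{n-1}$, so we must additionally verify that $\nu$ and $\mu$ respect them. Recall that here $s=n$, the alter ego $\twiddleeven$ has partial endomorphisms $g,f_2,\ldots,f_n$, no constants, relation set $\{\eq_1,\ldots,\eq_{n-1}\}$, and $\D(\F_{\SA_{2n}}(n))=\twiddleeven^{\,n}$. As in the odd case there is a uniqueness property: a member of $\Y$ is determined by its set of coordinates, which is an arbitrary non-empty subset $S\subseteq\{1,\ldots,n\}$; concretely it is the canonical tuple $(c_1,\ldots,c_1,c_2,\ldots,c_r)$ with $\{c_1<\cdots<c_r\}=S$ and $c_1$ repeated $n-r+1$ times.

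First I would check that $\Y$ is a substructure of $\twiddleeven^{\,n}$ (it is finite, hence closed). Each of $g,f_2,\ldots,f_n$, restricted to the positive part $\{1,\ldots,n\}\cap\dom h$ of its domain, is order-preserving and injective; hence if a canonical tuple $\bvec{u}$ with coordinate set $S$ lies in $\dom h^{\Y}$ --- equivalently $S\subseteq\dom h$ --- then applying $h$ coordinatewise produces the canonical tuple with coordinate set $h(S)$, still a non-empty subset of $\{1,\ldots,n\}$, so $h(\bvec{u})\in\Y$; the relations are simply restricted. For $\nu$, recall $\D(\Zed_{2n})=\{\id_{\Zed_{2n}}\}$ by Proposition~\ref{Prop:dualZ2n}, where every partial endomorphism acts as the empty map and each $\eq_m$ as equality. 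Since $\nu(\id_{\Zed_{2n}})=(1,2,\ldots,n)$ has $1$ as its first coordinate and $i$ as its $i$-th coordinate, it lies in no $\dom f_i^{\Y}$ and not in $\dom g^{\Y}$, and each $\eq_m^{\Y}$ restricts to equality on the singleton $\{(1,\ldots,n)\}$; thus $\{(1,\ldots,n)\}$ is a substructure of $\Y$ and $\nu$ is an isomorphism onto it, i.e.\ an embedding (the preservation and embedding side-conditions being trivial).

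The substantive step is that $\mu$ is a surjective morphism. Surjectivity is immediate: for $\bvec{u}\in\Y$ the coordinates are positive, so $\{|u_1|,\ldots,|u_n|\}$ is just the coordinate set of $\bvec{u}$ and, $\bvec{u}$ being canonical, $\mu(\bvec{u})=\bvec{u}$; thus $\mu$ restricts to the identity on $\Y$. That $\mu$ is a morphism rests on modulus $|x|=x\vee\neg x$ being a term function (Section~\ref{Sec:Sugihara}): consequently every partial endomorphism commutes with $|\cdot|$ on its domain and preserves the subalgebra $\dom h$, so for $h\in\{g,f_2,\ldots,f_n\}$ and $\bvec{a}\in\dom h^{\twiddleeven^{\,n}}$ each $|a_i|$ lies in $\dom h$, the coordinate set of $h(\bvec{a})$ is $h(S)$ where $S$ is that of $\bvec{a}$, and --- $h$ being order-preserving and injective on $S$ --- applying $h$ to the canonical tuple $\mu(\bvec{a})$ yields the canonical tuple for $h(S)$, namely $\mu(h(\bvec{a}))$. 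For the relations I would use that $x\eq_m y$ holds exactly when $x=y$ or $|x|,|y|\leq m$, together with compatibility of $\eq_m$ with the term $|\cdot|$: if $\bvec{a}\eq_m\bvec{b}$ coordinatewise then $|a_i|\eq_m|b_i|$ for all $i$, so the coordinate sets $S_a$ and $S_b$ have the same elements exceeding $m$, say $t_1<\cdots<t_q$; in the canonical tuples $\mu(\bvec{a})$ and $\mu(\bvec{b})$ these occupy the last $q$ coordinates and agree there, while all earlier coordinates have modulus $\leq m$, whence $\mu(\bvec{a})\eq_m\mu(\bvec{b})$. Continuity is automatic since everything is finite and discrete.

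I expect the relation-preservation for $\mu$ to be the only genuine obstacle; the delicate point is the interplay between the congruences $\eq_m$ --- which collapse exactly the elements of small modulus --- and the plateau-at-the-front shape of the members of $\Y$, while the remaining verifications are the same bookkeeping as in the odd case.
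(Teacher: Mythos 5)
Your proposal is correct and follows essentially the same route as the paper: the paper's proof simply says the argument is that of the odd case (closure of $\Y$ and the embedding/morphism checks via order-preservation and modulus being a term function) plus the extra observation that $\mu$ preserves $\eq_1,\ldots,\eq_{n-1}$, which it justifies in one line by compatibility of the congruences $\eq_m$ with term functions. Your hands-on verification of the $\eq_m$-preservation (matching the coordinates exceeding $m$ and noting the remaining ones are collapsed by $\eq_m$) is just an expanded version of that same observation, not a different method.
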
  

\begin{proof} 
The  argument is the same as  that given for  
Proposition~\ref{Prop:X2n1-as-TSOdd} (Step 2, odd case) except that we also 
 need to confirm that 
$\mu$ preserves  $\eq_1, 
\ldots, \eq_{n-1}$.  But this is clear  because each
 $\eq_m$ is a congruence and modulus is a  term function.
\end{proof}

We carry over notation from 
 the odd case with minor adaptation. 
 For each $k\leq n$, 
 there is a unique element of $\Y$ with $\{1,2,\ldots,k\}$ as its set of coordinates, \textit{viz.}  $\bvec{k}:=(1,\ldots ,1,2,\ldots , k)$, in which $1$ appears $n-k$ times.
 Given a general  $n$-tuple  $\bvec{a} = (a_1, \ldots, a_n)$ in~$\Y$,  we let $\sigma(\bvec{a})$
denote the number of different coordinates in $\bvec{a}$.   In particular $\sigma(\bvec{k}) = k$.

\begin{lem}  \label{lem:genclaimEven}
Let $\Y$ be the structure defined above.  Then
$\{\bvec{1},\ldots,\bvec{n}\}$ generates $\Y$.
\end{lem}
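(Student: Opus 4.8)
The plan is to follow the recipe used for part~(i) of Lemma~\ref{lem:genclaimOdd}, which is in fact easier in the even case because no coordinate can equal~$0$, so there is no analogue of case~(ii) to treat. The two inputs I would use are Corollary~\ref{cor:k-trans}(i), which manufactures the partial endomorphism we need, and Proposition~\ref{prop:genpe}, which guarantees that the substructure of $\twiddleeven^{\!\!n}$ generated by $\{\bvec{1},\ldots,\bvec{n}\}$ is closed under the action of \emph{every} member of $\PEZ{2n}$, not merely of $f_2,\ldots,f_n,g$.

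First I would fix $\bvec{u}=(u_1,\ldots,u_n)\in\Y$ and set $k=\sigma(\bvec{u})$, so $1\leq k\leq n$. From the definition of~$\Y$, the tuple $\bvec{u}$ has the form $(c_1,\ldots,c_1,c_2,\ldots,c_k)$ with $c_1$ repeated $n-k+1$ times and $0<c_1<c_2<\cdots<c_k\leq n$; in particular it has the same shape as $\bvec{k}=(1,\ldots,1,2,\ldots,k)$, in the sense that $\bvec{u}_i=c_{\bvec{k}_i}$ for every~$i$. Applying Corollary~\ref{cor:k-trans}(i) with $b_i=i$ and with the $c_i$ just described yields an invertible $e\in\PEZ{2n}$ with $e(i)=c_i$ for $1\leq i\leq k$; hence $e$ applied coordinatewise to $\bvec{k}$ gives precisely $\bvec{u}$, and all coordinates of $\bvec{k}$ lie in $\dom e$.

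It then remains to observe that, since $\bvec{k}\in\{\bvec{1},\ldots,\bvec{n}\}$ and a substructure is closed under every (partial) operation of the alter ego, it is closed under every finite composition of $f_2,\ldots,f_n,g$; by Proposition~\ref{prop:genpe} these compositions exhaust $\PEZ{2n}$, so the substructure generated by $\{\bvec{1},\ldots,\bvec{n}\}$ contains $e(\bvec{k})=\bvec{u}$. As $\bvec{u}$ was arbitrary in~$\Y$, the set $\{\bvec{1},\ldots,\bvec{n}\}$ generates~$\Y$.

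I do not anticipate any real obstacle: the argument is essentially bookkeeping. The one point that deserves a moment's care is the passage from closure under the alter-ego operations to closure under an arbitrary partial endomorphism~$e$: one writes $e=s_r\newcirc\cdots\newcirc s_1$ with each $s_t\in\{f_2,\ldots,f_n,g\}$ and notes that definedness of $e(\bvec{k})$ forces each intermediate tuple $s_t\newcirc\cdots\newcirc s_1(\bvec{k})$ to be defined and hence to lie in the substructure.
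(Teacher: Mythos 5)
Your proposal is correct and follows essentially the same route as the paper: the paper's own proof is just the one-line application of Corollary~\ref{cor:k-trans}(i) to produce $e\in\PEZ{2n}$ with $e(\bvec{k}_i)=a_i$, from which $\bvec{a}=e(\bvec{k})$ lies in the generated substructure. Your additional remark that closure under $f_2,\ldots,f_n,g$ yields closure under every composite in $\PEZ{2n}$ (with intermediate tuples automatically defined) is exactly the point the paper leaves implicit, via Proposition~\ref{prop:genpe} and its remark following Theorem~\ref{Thm:SugDualityOdd}.
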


\begin{proof}
Let $\bvec{a}=(a_1,\ldots,a_n)\in\Y$ and denote
$\sigma (\bvec{a})$ by~$k$.   
 Corollary~\ref{cor:k-trans}(i) 
supplies $e \PEZ{2n}$ such that $e(\bvec{k}_i) = a_i$ for each~$i$.
\end{proof}

\begin{prop}[{\bf Step 3}]
  $\Y$ is join-irreducible in ${\mathcal S}_\Y$.
  \end{prop}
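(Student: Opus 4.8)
The plan is to follow the template of the odd‑case argument (Proposition~\ref{Prop:X_SX}) and to prove the stronger assertion that every morphism $\varphi\colon\Y\to\Y$ whose image contains $\bvec{n}=(1,2,\dots,n)$ is the identity map; join-irreducibility of $\Y$ in $\mathcal{S}_\Y$ (indeed, that $\Y$ is the unique maximal join-irreducible) then follows at once, since $\Y$ is the only member of $\mathcal{S}_\Y$ containing $\bvec{n}$. I would run an induction on $n$, the case $n=1$ being trivial. Throughout I write $C(\bvec{u})$ for the set of coordinates of $\bvec{u}\in\Y$; since an element of $\Y$ is determined by its set of coordinates, $\bvec{u}\mapsto C(\bvec{u})$ is a bijection from $\Y$ onto the non-empty subsets of $\{1,\dots,n\}$, and $\sigma(\bvec{u})=|C(\bvec{u})|$.

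First I would observe that $\bvec{u}\in\dom f_i$ iff $i\notin C(\bvec{u})$ and $\bvec{u}\in\dom g$ iff $1\notin C(\bvec{u})$, so the unique element of $\Y$ lying in the domain of no partial operation of $\twiddleeven$ is the one with $C(\bvec{u})=\{1,\dots,n\}$, namely $\bvec{n}$. Exactly as in the odd case, if $\varphi(\bvec{x})=\bvec{n}$ then $\bvec{x}$ can lie in no such domain (a morphism sends domain elements to domain elements), whence $\bvec{x}=\bvec{n}$ and $\varphi(\bvec{n})=\bvec{n}$.

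The main work is to show that $\varphi$ fixes every element of $E:=\{\bvec{u}\in\Y:n\in C(\bvec{u})\}$. Since $\varphi$ preserves $\eq_{n-1}$ and fixes $\bvec{n}$, it maps the $\eq_{n-1}$-class of $\bvec{n}$ into itself; that class is $E':=\{\bvec{u}\in E:\sigma(\bvec{u})\geq2\}$, while the one remaining point $(n,\dots,n)$ of $E$ is fixed because $C(\varphi(n,\dots,n))\subseteq\{n\}$. Deleting the common last coordinate $n$ identifies $E'$ bijectively with $\Y'$, the structure built in the same way but with $n-1$ in place of $n$; under this identification $\bvec{n}$ becomes the top element of $\Y'$, the operations $f_2,\dots,f_{n-1}$ and the relations $\eq_1,\dots,\eq_{n-2}$ restrict to their $\Y'$-counterparts, and the operation $g$ of $\Y'$ is realised on $E'$ by the partial endomorphism of $\Zed_{2n}$ that subtracts $1$ from each coordinate $<n$ and fixes $n$ --- a member of $\PEZ{2n}$, hence (Proposition~\ref{prop:genpe}) a composite of alter-ego operations and so preserved by $\varphi$. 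Thus $\varphi{\restriction}_{E'}$ transports to a morphism $\Y'\to\Y'$ whose image contains the top element, and the inductive hypothesis forces it to be the identity; therefore $\varphi$ fixes $E$ pointwise.

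To finish, I would use that $E$ meets every level: $(n,\dots,n)$ has $\sigma=1$, and for $2\leq k\leq n$ the element with coordinate set $\{1,\dots,k-1,n\}$ has $\sigma=k$. Given $\bvec{u}\in\Y$ with $\sigma(\bvec{u})=k$, choose $\bvec{v}\in E$ with $\sigma(\bvec{v})=k$; Corollary~\ref{cor:k-trans}(i) together with Proposition~\ref{prop:genpe} provides a composite $e$ of alter-ego operations with $e(\bvec{v})=\bvec{u}$, so $\varphi(\bvec{u})=e(\varphi(\bvec{v}))=e(\bvec{v})=\bvec{u}$. Hence $\varphi=\id_\Y$ and the induction closes. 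The step I expect to be the main obstacle is the third one: checking with care that striking out the top coordinate is a structure-preserving bijection $E'\cong\Y'$, the delicate point being that the global operation $g$ of $\Y$ does not restrict to $E'$ and must be recovered there by a different partial endomorphism which $\varphi$ is nevertheless known to preserve.
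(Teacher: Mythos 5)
Your argument is correct, but it is organised quite differently from the paper's. The paper proves directly, without induction, that any morphism $\varphi\colon\Y\to\Y$ with $(1,2,\ldots,n)\in\img\varphi$ is the identity: domain-preservation for the $f_i$ and $g$ gives $C(\varphi(\bvec{a}))\subseteq C(\bvec{a})$ for every $\bvec{a}$, and then a single trick --- choosing via Corollary~\ref{cor:k-trans} an invertible $e\in\PEZ{2n}$ carrying $\bvec{a}$ to the element $\eq_m$-related to $(1,2,\ldots,n)$ with $m=n-k+1$, and using that $\varphi$ preserves both $e$ and $\eq_m$ --- shows $\sigma(\varphi(\bvec{a}))\geq\sigma(\bvec{a})$, so the coordinate sets coincide and the uniqueness property forces $\varphi(\bvec{a})=\bvec{a}$. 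You instead induct on $n$, identifying the $\eq_{n-1}$-class of $\bvec{n}$ with the test space $\Y'$ for $n-1$ (your description of that class, and the fixing of $(n,\ldots,n)$ by the domain argument, are right), transporting $\varphi$ to an endomorphism of $\Y'$ fixing its top element, and then propagating fixedness to all of $\Y$ by invertible partial endomorphisms; the delicate point you flag --- that the $g$ of $\Y'$ is realised on the class not by $g$ itself but by the partial endomorphism of $\Zed_{2n}$ that decrements $\pm2,\ldots,\pm(n-1)$ and fixes $\pm n$ --- does check out, since that map is an element of $\PEZ{2n}$ and hence, by Proposition~\ref{prop:genpe}, is preserved by every morphism (a fact the paper also relies on, though it records the analogous remark explicitly only after Theorem~\ref{Thm:SugDualityOdd}). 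What the paper's route buys is brevity and self-containment: one application of the $\eq_m$/invertible-endomorphism argument replaces both your inductive transfer and the final propagation step, and no verification that coordinate-deletion is structure-preserving is needed. What your route buys is a transparent view of the self-similarity of the spaces $\Y$ and a reuse of the odd-case template of Proposition~\ref{Prop:X_SX} (fix the top, then spread by transitivity), at the cost of the bookkeeping you yourself identify as the main obstacle.
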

  \begin{proof}
 Let  $\varphi \colon \Y \to \Y$ be a morphism such that $(1,2,\ldots,n)\in \img\varphi $. 
The required result will follow if we can show that $\varphi $ acts as the identity on~$\Y$.  

Let $x\in \Y$ be such that $\varphi (x)=(1,2,\ldots,n)$. Since $(1,\ldots, n) \notin \dom e$ for any $e \in H$ and it is the only element of~$\Y$ with this property, $x=(1,2,\ldots,n)$.  
Consider now
$\bvec{a}=(a_1,\ldots,a_n)\in \Y\setminus\{(1,2,\ldots,n)\}$ and write $\varphi (\bvec{a})=(b_1,\ldots,b_n)=\bvec{b}$.   
We claim that
$\{a_1, \ldots, a_n\}  = \{b_1, \ldots, b_n\}$.  For this it will be enough to show that $\sigma(\bvec{a}) = \sigma(\bvec{b})$.

We know that
 $\bvec{a}\in \dom f_i$ if and only if 
$a_j\neq i$ for each $j$. Similarly,  $\bvec{a}\in \dom g$ if and only if $a_j\neq 1$ for each $j$.  Since $\varphi $ preserves each $e \in H$ we deduce that 
 $\{b_1,\ldots,b_n\}\subseteq \{a_1,\ldots,a_n\}$. 
Suppose  $ \sigma (\bvec{a} =k<n$.  
We now need to prove 
that
$\sigma(\bvec{b}) \geq k$.

Let  $m:= n-k+1$.   
By Corollary~\ref{cor:k-trans}  there exists $e\in\PEZ{2n}$ such that 
$
(a_1,\ldots,a_n)= e
(m,\ldots, m, m+1, \ldots,n)$.
Then $e(\bvec{a})\eq_m (1,2,\ldots,n)$.
 Since $\varphi $ preserves 
$e$ and $\eq_m$  
it follows that 
\[e(\varphi (\bvec{a}))=\varphi (e(\bvec{a})) 
\eq_m 
\,\varphi (1,2,\ldots,n)=(1,2,\ldots,n).
\]
Hence $e(\varphi (\bvec{a}))= 
(a_1,\ldots,a_m
,m+1,m+2, 
\ldots,n)$, because $\eq_m\cap \bigl( \Zed_{2n}^2 \setminus \Zed_{2m}^2\bigr)$ is the diagonal  
 relation.   
Therefore
 $\sigma(e(\varphi (\bvec{a}))) \geq k$. 
  Since~$e$ is invertible, 
$\sigma((b_1,\ldots,b_n) )=\sigma( \varphi (\bvec{a}))\geq k$. 
It follows that $|\{b_1,\ldots,b_n\}|=k$ 
since $\{b_1,\ldots,b_n\}\subseteq \{a_1,\ldots,a_n\}$.
\end{proof}

\begin{prop}[{\bf Step 5}]\label{Prop:AnEven}
Let $\alg{B}\subseteq\Zed_{2n}^{n}$ be the subalgebra whose elements $(a_1,\ldots,a_n)$ satisfy the following conditions:
\begin{itemize}
\item[{\rm (i)}]
$a_1\in\{1,-1\}$;
\item[{\rm (ii)}] there exists a value of $j$, necessarily unique,  and with $ j\leq n$ such that 
\begin{itemize}
\item[{\rm (a)}]
$|a_k| = 1 $ if $ k\leq j$  and   
{\rm (b)} $g(a_{k+1})=a_{k}$ if $n>k\geq j$.
\end{itemize}

\end{itemize}
Then $\alg{B}$ is a subalgebra of $\Zed_2\times \Zed_4\times\cdots\times \Zed_{2n}$ and it is isomorphic to $\E (\Y)$.
\end{prop}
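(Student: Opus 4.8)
The plan is to show that the map $t\colon\E(\Y)\to\B$ given by $x\mapsto(x(\bvec 1),\ldots,x(\bvec n))$ is an isomorphism, closely following the proof of Proposition~\ref{Prop:AnOdd}. After disposing of $n=1$ by direct inspection (here $\SA_2$ is term-equivalent to Boolean algebras), I would fix $n\ge 2$ and argue in three steps: $t(x)\in\B$ for every $x$; $t$ is an injective homomorphism; and $t$ maps onto $\B$.

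For the first step, preservation of the lifted partial operations forces $x(\bvec 1)$ into $\dom f_2\cap\cdots\cap\dom f_n=\{1,-1\}$ (since $\bvec 1=(1,\ldots,1)$ lies in every $\dom f_i$, $i\ge 2$, but not in $\dom g$), which gives~(i). For~(ii), let $j$ be the largest index with $|x(\bvec k)|=1$ for all $k\le j$; if $j=n$ we are done, so assume $j<n$. This is where the even case genuinely differs from the odd one: there $g$ was total and one could read the identity $g^2(\bvec{k+1})=g(\bvec k)$ off the structure of $\Y$, but here $g$ is only partial and $\bvec{k+1}\notin\dom g$. Instead I would work with the shifted generators $\bvec w_k:=g^{-1}(\bvec k)$ (the image of $\bvec k$ under the inverse partial endomorphism of $g$, defined for $k<n$), which satisfy $g(\bvec w_k)=\bvec k$, together with the congruence relations $\bvec w_k\eq_m\bvec{k+1}$ (valid for $m\ge 2$) and the fact that morphisms in $\IScP(\twiddleeven)$ preserve all of $\PEZ{2n}$, as in the odd case. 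From $g(x(\bvec w_j))=x(\bvec j)\in\{1,-1\}$ one gets $x(\bvec w_j)\in\{2,-2\}$, hence $|x(\bvec{j+1})|=2$; propagating upward through $\bvec w_k\eq_m\bvec{k+1}$ with $m$ chosen just large enough that $\eq_m$ collapses $\Zed_{2m}$ but not the value being tracked, one obtains $g(x(\bvec{k+1}))=x(\bvec k)$ for $j<k<n$, which is condition~(ii).

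The second step is quick: injectivity follows from Lemma~\ref{lem:genclaimEven}, since $\{\bvec 1,\ldots,\bvec n\}$ generates $\Y$ and so a morphism is determined by its values on these points, while $t$ is a homomorphism because each of its coordinates is evaluation at a point of $\Y$, hence a homomorphism into $\Zed_{2n}$. For the third step, given $\bvec a=(a_1,\ldots,a_n)\in\B$ with $j$ the value supplied by~(ii), I would set $x_{\bvec a}(\bvec u):=\sgn(a_{\sigma(\bvec u)})\cdot\bvec u_{n-j+1}$ for $\bvec u\in\Y$ — note that no exceptional value is needed here, in contrast with the odd case, since $\Y$ has no analogue of $\bvec{n+1}$ — check that $x_{\bvec a}(\bvec k)=a_k$, and then verify that $x_{\bvec a}$ is a morphism. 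Preservation of each $f_i$ and of $g$ goes exactly as in the odd case, since these maps commute with multiplication by $\pm1$ on their domains; the new ingredient is preservation of each $\eq_m$, which I would establish by a case analysis on a relation $\bvec u\eq_m\bvec v$ in $\Y$, comparing $\sigma(\bvec u)$ with $\sigma(\bvec v)$ and the coordinates at position $n-j+1$, using that $\eq_m$-related coordinates either coincide or both lie in $\Zed_{2m}$. Combining the three steps gives $\E(\Y)\cong\B$, and the assertion that $\B$ embeds in $\Zed_2\times\Zed_4\times\cdots\times\Zed_{2n}$ is then immediate, since~(i)--(ii) force $|a_k|\le k$ and, by Proposition~\ref{prop:subalg}, $\{a:|a|\le k\}$ carries the unique copy of $\Zed_{2k}$ inside $\Zed_{2n}$.

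The main obstacle is the first step. In the odd case the link between consecutive generators was encoded in a single structural identity; here, because $g$ is only partial, one must interleave the action of $g$ on the shifted generators $\bvec w_k$ with the much coarser information supplied by the congruences $\eq_m$, and pinning down the magnitudes — and, where they are forced, the signs — of the $x(\bvec k)$ is the delicate part. It is also here that the small-$k$ degeneracies, notably $\SA_4$, need to be watched.
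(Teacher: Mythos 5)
Your proposal is, in substance, the paper's own proof: the same map $t(x)=(x(\bvec{1}),\ldots,x(\bvec{n}))$, the same three claims (membership, injective homomorphism via Lemma~\ref{lem:genclaimEven}, surjectivity via $x_{\bvec{a}}(\bvec{u})=\sgn (a_{\sigma(\bvec{u})})\cdot \bvec{u}_{n-j+1}$), and your Claim~1 device --- the shifted tuples $g^{-1}(\bvec{k})$ together with the congruences --- is exactly the paper's use of $h=g^{-1}=f_2\newcirc\cdots\newcirc f_n$ and the relation $\bvec{k+1}\eq_2 h(\bvec{k})$ (your ``$m$ chosen just large enough'' can simply be $m=2$ throughout, since for $k>j$ the tracked value has modulus at least $3$). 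Your more explicit case analysis for $\eq_m$-preservation in Claim~3 is if anything more careful than the paper's one-line justification.

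Two points need attention. First, the $\eq_2$ step is only available when $\eq_2$ belongs to the alter ego, i.e.\ when $n\geq 3$; the paper accordingly disposes of $n=1,2$ by direct computation and runs the general argument only for $n>2$, whereas you fold $n=2$ into the general case (you do flag $\SA_4$ at the end, but as structured your argument invokes a relation that is absent from $\twiddle{\Zed_4}$). Second, at $k=j$ the relation $g^{-1}(\bvec{j})\eq_2\bvec{j+1}$ pins down only $|x(\bvec{j+1})|=2$ and not its sign, so what your propagation actually delivers is precisely what you state, namely $g(x(\bvec{k+1}))=x(\bvec{k})$ for $k>j$; this is \emph{not} condition (ii)(b) as printed, which starts at $k=j$ and would tie the sign of $a_{j+1}$ to $a_j$, so the phrase ``which is condition (ii)'' overstates what has been proved. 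The paper's own Claim~1 asserts the range $k\geq j$ without justifying the $k=j$ link, and the recursive description of $B_{2n}$ in Section~\ref{sec:sumup} (arbitrary signs on the unit prefix, a single sign on the tail) indicates that the $k=j$ instance is not in fact forced; so you should either reconcile your Claim~1 with the membership conditions used in Claim~3 (they must describe the same set for the three claims to assemble into an isomorphism) or note explicitly that the chain condition you verify begins at $k=j+1$ together with $|a_{j+1}|=2$.
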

\begin{proof}
The cases $n=1,2$ follow from straightforward calculations. 

Now fix $n>2$.

We define a map $t\colon \E(\Y)\to \B$ which we shall show is an isomorphism.  
Given  
$x\in \E(\Y)$, let
\[
t(x)=(x(\bvec{1}),x(\bvec{2}),\ldots, x(\bvec{n})).
\]

\noindent {\bf Claim 1:} $t(x)\in \B$.
\begin{proof}
If $| x(\bvec{k})|=1$ for all $k$, the claim is true.
 Assume now that there exists $k$ with $|x(\bvec{k})|\ne 1$. 
Since $\bvec{1}=(1,1,\ldots,1)\in \bigcap_{i=2}^{n-1} \dom f_i$, it follows that
 $x(\bvec{1})\in\bigcap_{i=2}^{n-1} \dom f_i =\{-1,1\}$. Therefore
$
K_x := \{\,k\in [n] \mid |x(\bvec{k})|=1 \mbox{ and } |x(\bvec{k+1})|\ne 1\}\ne \emptyset$.
Let  $j= \min K_x$.
Then $|x(\bvec{i})| = 1$ for $i \leq j$.
Observe that   $\bvec{j+1}\eq_2 \, h(\bvec{j})$, where $h=g^{-1}=f_2\circ\cdots\circ f_n$. Hence 
$$
x(\bvec{j+1})\eq_2 h(x(\bvec{j}))\in \{h(1),h(-1)\}= \{-2,2\}.
$$ %
So  $x(\bvec{j+1})\in\{-2,2\}$.
 Assume that $x(\bvec{j+1})=2$. For $k$ such that $j\leq k\leq n$, 
\[
 x(\bvec{k+1})\eq_{2}\, x(h(\bvec{k})) = h(x(\bvec{k})).
\]
This implies  $x(\bvec{k+1})\notin \{-1,1\}$ and $g(x(\bvec{k+1}))=g( h(x(\bvec(k)))=x(\bvec{k})$.
The case $x(\bvec{j+1})=-2$ is handled similarly.  
\end{proof}

\noindent  {\bf  Claim 2}:  $t\colon \E(\Y) \to \alg{B}$ is an injective homomorphism.
\begin{proof}
By Lemma~\ref{lem:genclaimEven}, the map~$x$ is uniquely determined by the set $\{x(\bvec{1}),\ldots,x(\bvec{n})\}$. Therefore, $t$ is  injective.
 Moreover, 
$t$ is a homomorphism because
it is given coordinatewise. 
\end{proof}

\noindent  {\bf  Claim 3}: The homomorphism $t$ maps $\E(\Y)$  onto $\alg{B}$.  
\begin{proof}   Here the even case is somewhat simpler than the odd one since we
 do not have to contend with the complications of the extra, $(n+1)^\text{1st}$,
coordinate that arose in Proposition~\ref{Prop:AnOdd};  the $n$ coordinates here follow the same pattern as the first~$n$ coordinates  in the odd case.

Let $\bvec{b}=(b_1,\ldots, b_{n})\in B$. 
Define   $x_{\bvec{b} }\colon \Y \to  \twiddleeven 
$ by
$x_{\mathbf{b}}(\mathbf{u})=
\sgn b_{\sigma({\mathbf u})}\cdot 
u_{n-j+1}$,
where $j\in\{1,\ldots,n\}$ is as in (ii).

If
$k \leq j$ then     $\sigma(\bvec{k} )= k$ and 
$
x_{\mathbf{b}}(\bvec{k})=
\sgn a_k\cdot
\bvec{k}_{n+1-j}=\sgn b_k \cdot1=b_k$.
 If $n\geq k>j$ then $\bvec{k}_{n-j+1}=k-j+1$. Also,  by (ii)(b), $|b_k|-(k-j-1)=|g^{k-j-1}(b_k)|=|b_{j+1}|=2$. Hence, $|b_k|=k-j+1$. 
 Then
 $
x_{\mathbf{b}}(\bvec{k})=
\sgn b_{k}\cdot
\bvec{k}_{n+1-j}=\sgn b_k \cdot (k-j+1)=b_k$.  Therefore  $x_{\bvec{b}}(\bvec{i})=b_{i}$ 
for $i\in\{1,\ldots, n\}$. 

It only remains to prove  that $x_{\mathbf{b}}\in \E(\Y)$. 
Let $h\in \PEZ{2n}$ and $\mathbf{u}=(u_1,\ldots,u_{n+1})\in \dom h$.
Since $h$ is injective, $\sigma({\mathbf{u}})=\sigma({h(\mathbf{u})})$. 
Then $h$ preserves $\sgn$ coordinatewise and hence 
\[
x_{\mathbf{b}}(h(\mathbf{u}))=\sgn b_{\sigma(h(\mathbf{u}))}\cdot (h(\mathbf{u}))_{n-j+1}=
\sgn  b_{\sigma({\mathbf u})}\cdot
h(u_{n-j+1})
= h(\sgn  b_{\sigma(\mathbf{u})}\cdot
 u_{n-j+1})
=h(x_{\mathbf{b}}(\mathbf{u})).
\]
Since $\eq_m$  is a congruence and
modulus is a term function, $x_{\bvec{b}}$ preserves~$\eq_m$.
\end{proof}

We have proved that $\E(\Y)$ is isomorphic to~$\B$. \end{proof} 

As a corollary to this we can identify a set of generators for~$\B$; see 
Proposition~\ref{cor:gens}.


\section{Admissibility algebras for Sugihara algebras:  overview and applications}
\label{sec:sumup}

We have achieved the goal we set in this paper: the determination of the admissibility algebra  for each quasivariety $\SA_k$.  We now go back to our discussion in Section~\ref{sec:intro}   and review our results,  from a computational and from a theoretical standpoint.  
We already argued that using admissibility algebras instead of free algebras 
reduces  the search space and 
 makes  testing  logical rules for admissibility a more tractable problem.   
 But just how big is the improvement? 
To assess this we need to compare, for a general value of  $k$, the relative sizes of $\SA_k$'s admissibility algebra and of the free algebra $\F_{\SA_k} \bigl(\left[\frac{k+1}{2}\right]\bigr)$, where $[\, \cdot \, ]$ denotes integer part.

In Propositions~\ref{Prop:AnOdd}
and~\ref{Prop:AnEven} we described the admissibility algebras for  $\SA_{2n+1}$ and $\SA_{2n}$, respectively, and with~$n$ fixed.  Now we wish to consider the quasivarieties $\SA_k$, for a general  variable,~$k$.  Accordingly, when working with $\SA_k$  
we 
now write the admissibility algebra as $\B_k$ rather than~$\B$ and write  $\Y_k$  for the structure on which the TS-configuration is based.   We write $s:= [\frac{k+1}{2}]$.  
Introduction of~$s$ highlights the similarities between Propositions~\ref{Prop:X2n1-as-TSOdd} 
and Proposition~\ref{Prop:X2n-as-TSEven}.
In both 
cases $\Y_k$ consists of $s$-tuples drawn from the non-negative elements of $\Zed_k$ and the map $\mu$ in the TS-configuration based on $\Y_k$ is defined in the same way whether $k$ is even or odd.

Free algebras can be notoriously large, and this is the case for our quasivarieties.  
 Using \tafa\ we could calculate $|\F_{\SA_3}(2)| $.  We were able not only to confirm the size of  $\F_{\SA_3}(2) $,  but also to determine the  sizes of $\F_{\SA_4}(2)$ and  $\F_{\SA_5}(3)$ exactly. 
This was done by using the natural dualities developed 
above,  knowing that $\E(\twiddle{\Zed_k}^{\!\!s})$ is an $s$-generated free algebra in $\SA_k$: we calculated the number of morphisms from  $\twiddlek 
^{\!\!s}$ into $\twiddlek$, for $k=3,4, 5$, and $6$.
We have not found a general formula for  $|\F(\SA_k(s))|$ but we do   give
   lower bounds for general~$k$, based  on  numbers of particular morphisms from  $\twiddlek 
^{\!\!s}$ into $\twiddlek$.

Proposition~\ref{Prop:AnEven} 
leads to 
a recursive specification  of (the universes of) the algebras~$\B_{2n}$:
\[
B_2 = \{-1,1\}\quad 
\text{and} \quad 
B_{2n} = \{-1,1\}\times \bigl(B_{2n-2} \cup \{(2,3,\ldots,n),(-2,-3,\ldots,n)\}\bigr) \ \ (n > 1).
\]
Hence, using the recurrence  relation $|B_{2n}|=2|B_{2n-2}|+4$
 we can prove that $|B_{2n}|=3 \cdot 2^n-4$.

Comparing the definition of $\B_{2n+1}$ with  that of $\B_{2n}$,
we  see that 
\[
B_{2n+1}= \{(b_1,\ldots,b_n,b_n)\mid (b_1,\ldots,b_n)\in B_{2n}\setminus \{-1,1\}^n\} \cup ( \{-1,1\}^n\times \{-1,0,1\}).
\]
Hence $|B_{2n+1}|=|B_{2n}|-2^{n}+3\cdot 2^n=3 \cdot 2^n-4+2\cdot 2^n=5\cdot 2^n-4$.

Our data are shown in Table~\ref{table:Sugcasestud}.

\begin{table}[ht]
\begin{center}
\begin{tabular}{ccccc}
  	$k$ & $s= \left[\frac{k+1}{2}\right]$ & $ \left|\F_{\SA_k}(s)
\right|\quad $ & \quad $|{\Y
}_k|$ \quad & \qquad   $|
  	 \E( \Y_k)|$ \quad  \  \\[.15cm]
	\hline &&&\\[-.3cm]
	      $3$ &  $2$ & $1\,296$ \ & $3$ & $\ 6$\ \\[.1cm]
     $4$ & $2$ & $20\,736$ \ & $3$ & $\ 8$\ \\[.1cm]
     $5$ &  $3$ & \qquad \quad $2^{44}\cdot 3^{36}\cdot (1+ 
{2^{-3}\cdot 3^{-4}})^6$   \qquad  \quad  & $7$ &  $16$ \\[.1cm]
  $6$ & $3 $ & \qquad \quad $2^{68}\cdot 3^{36}\cdot (1+ 
{2^{-7}\cdot 3^{-4}})^6$   \qquad  \quad  & $7$ &  $20$ \\[.1cm]
	 \ $2n$ \ &  $n$   & $\geq 2^{2^{n+1}}\cdot 2^ {{(n-1)^2}/{2}}\cdot 3^{2^{n}}$  & $2^{n}-1$ &$3\cdot 2^n-4$\\[.1cm]
	 \   $2n+1$ \  &   $n+1$ &   $\geq 2^{2^{n+1}}\cdot 2^ {{n^2}/{2}}\cdot 3^{2^{n+1}}$ \  & $2^{n+1}-1$ & $5\cdot 2^n-4$ 
\end{tabular}
\end{center}
\caption{Cardinalities of free  algebras, admissibility algebras and test spaces\label{table:Sugcasestud}}
\end{table}

Proposition~\ref{cor:gens} is a corollary of Propositions~\ref{Prop:AnOdd} 
and~\ref{Prop:AnEven}, for~$k$ odd and~$k$ even, respectively. 
We may regard $\Y_k$ as a substructure of $\twiddlek^{\!\!s}$.  We denote the natural inclusion map by~$\iota$.

\begin{prop}[generators for the admissibility algebra for $\SA_{k}$]\label{cor:gens}
Fix $k$, either even or odd.
\begin{enumerate}
\item[{\rm (i)}]  $t \circ \E(\iota) \colon \F_{\SA_{k}}(s) \to  \B_k$  is surjective and $\E(\iota)$ acts by restriction on each  coordinate projection~$\pi_j$.
\item[{\rm (ii)}] $\E(\mu ) \circ t^{-1} \colon \B_k \to \F_{\SA_k}(s) $ is an embedding and $\E(\mu) $ acts by composition on  the restriction to $\Y_k$ of each coordinate projection.
\end{enumerate}

For $j = 1, \ldots, s$, define
\begin{alignat*}{2}
&\text{for $k$ even:}  \hspace*{1cm} &
b_j^k&=\begin{cases}
(1,\ldots,1)& \mbox{if }j=1 ,\\
(1,\ldots,1, 2)& \mbox{if }j=2,\\
(1,\ldots,1,2,\ldots,j/2)& \mbox{if }j>2;
\end{cases}
\\[1ex] 
&\text{for $k$ odd:}  &
b_j^k &= \begin{cases}
(1,\ldots,1,0)& \mbox{if }j=1,\\ 
(1,\ldots,1, 1)& \mbox{if }j=2 ,\\ 
(1,\ldots,1,2,\ldots,(j-1)/2, (j-1)/2)& \mbox{if }j>2. 
\end{cases}
\end{alignat*} 
Then $b_j^k =  t(\pi_j{\restriction}_{\Y_k})$ and   
$\{ b^k_1, \ldots, b^k_s\}$ generates $\B_k$.
\end{prop}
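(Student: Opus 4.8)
The plan is to prove the three assertions in the displayed block separately and then observe that the last claim follows by feeding the duality facts from Section~\ref{Sec:NatDual} into Propositions~\ref{Prop:AnOdd} and~\ref{Prop:AnEven}. The central object is the chain of dual maps $\D(\M)\xhookrightarrow{\nu}\Y_k\xleftarrow{\mu}\twiddlek^{\!s}$ and the factorisation $\iota\colon\Y_k\hookrightarrow\twiddlek^{\!s}$ of the inclusion. First I would recall that, because the duality in Theorems~\ref{Thm:SugDualityOdd} and~\ref{Thm:SugDualityEven} is strong, hence full, the functor $\E$ converts the embedding $\iota$ into a surjection $\E(\iota)\colon\E(\twiddlek^{\!s})\twoheadrightarrow\E(\Y_k)$ and the surjection $\mu$ into an embedding $\E(\mu)\colon\E(\Y_k)\hookrightarrow\E(\twiddlek^{\!s})$; moreover $\E(\twiddlek^{\!s})$ is (concretely) the free algebra $\F_{\SA_k}(s)$ with the coordinate projections $\pi_1,\dots,\pi_s$ as free generators (Section~\ref{Sec:NatDual}, the Lemma of \cite[Section~2.2]{CD98}). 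Composing with the isomorphism $t\colon\E(\Y_k)\to\B_k$ of Proposition~\ref{Prop:AnOdd} (resp.\ \ref{Prop:AnEven}) gives (i) and (ii): $t\circ\E(\iota)$ is a surjection $\F_{\SA_k}(s)\to\B_k$ and $\E(\mu)\circ t^{-1}$ is an embedding $\B_k\to\F_{\SA_k}(s)$. The statements that $\E(\iota)$ acts by restriction on each $\pi_j$ and that $\E(\mu)$ acts by precomposition with $\mu$ on $\pi_j{\restriction}_{\Y_k}$ are immediate from the definition of $\E$ on morphisms ($\E\colon\varphi\mapsto-\newcirc\varphi$): $\E(\iota)(\pi_j)=\pi_j\newcirc\iota=\pi_j{\restriction}_{\Y_k}$, and likewise $\E(\mu)(\pi_j{\restriction}_{\Y_k})=(\pi_j{\restriction}_{\Y_k})\newcirc\mu=\pi_j\newcirc\mu$.

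Next I would compute $t(\pi_j{\restriction}_{\Y_k})$ explicitly and match it with $b_j^k$. By the definition of $t$ in the two propositions, $t(x)=(x(\bvec1),\dots,x(\bvec n))$ (with an extra coordinate $x(\bvec{n+1})$ in the odd case), so $t(\pi_j{\restriction}_{\Y_k})$ has $m$-th coordinate $\pi_j(\bvec m)=\bvec m_j$, the $j$-th entry of the tuple $\bvec m$. Recalling $\bvec m=(1,\dots,1,2,\dots,m)$ with $1$ appearing the appropriate number of times (and $\bvec{n+1}=(0,1,\dots,n)$ in the odd case), a short bookkeeping check of which position $j$ falls in — in the constant block of $1$'s, or in the strictly increasing tail — yields exactly the case-split formulas displayed for $b_j^k$, even and odd. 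I would present this as a one-line computation per case rather than belabour the index arithmetic.

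Finally, to see that $\{b_1^k,\dots,b_s^k\}$ generates $\B_k$, I would transport the question through the isomorphism $t$: it suffices that $\{\pi_1{\restriction}_{\Y_k},\dots,\pi_s{\restriction}_{\Y_k}\}$ generates $\E(\Y_k)$. Now $\E(\iota)$ is a surjective homomorphism $\F_{\SA_k}(s)\to\E(\Y_k)$ sending the free generators $\pi_1,\dots,\pi_s$ to $\pi_1{\restriction}_{\Y_k},\dots,\pi_s{\restriction}_{\Y_k}$; since the free generators generate $\F_{\SA_k}(s)$ and a homomorphic image of a generating set generates the image, the restrictions generate $\E(\Y_k)$, and hence the $b_j^k$ generate $\B_k$. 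The only genuinely non-routine point — and the step I would be most careful about — is checking that the coordinate projections really do restrict to \emph{morphisms} $\Y_k\to\twiddlek$ in $\CX$, i.e.\ that $\pi_j{\restriction}_{\Y_k}\in\E(\Y_k)$; but this is built into the fact (Propositions~\ref{Prop:X2n1-as-TSOdd} and~\ref{Prop:X2n-as-TSEven}) that $\iota$ realises $\Y_k$ as a substructure of $\twiddlek^{\!s}$, so each $\pi_j\newcirc\iota$ is a composite of morphisms. Everything else is formal manipulation of the $\D$--$\E$ adjunction together with the explicit description of $t$ already established, so no new obstacle arises.
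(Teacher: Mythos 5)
Your argument is correct and follows essentially the same route as the paper's own proof: strongness of the duality turns $\iota$ and $\mu$ into a surjection and an embedding under $\E$, the action statements come straight from $\E\colon\varphi\mapsto-\newcirc\varphi$, the $b_j^k$ are images of the free generators $\pi_j$ under the surjection $t\circ\E(\iota)$ and hence generate $\B_k$, and the explicit formulas are a finite coordinate check via $t$. Your extra care in verifying that each $\pi_j{\restriction}_{\Y_k}$ is indeed a morphism, and in spelling out the coordinate computation that the paper dismisses as a straightforward verification, only makes the argument more complete.
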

\begin{proof} 
Because the duality is strong, the functor $\E$ takes surjections (embeddings) to
embeddings (surjections).  Moreover $\E(\varphi) = \varphi \circ -$ for any 
$\CX$-morphism $\varphi$ 
and it is clear that $\pi_j {\restriction}_{\Y_k} \circ \mu $ equals $\pi_j \circ \mu$
because $\img\mu = \Y_k$.
The claims in (i) and (ii) follow directly.

We 
define $b_j^k := (\pi_j{\restriction}_{\Y})$ for $j = 1,\ldots, s$. 
These elements are  the images under a surjection of the free generators for $\F_{\SA_k}(s)$, and so generate $\B_k$.
 It only remains to confirm, from the description of the elements of $\B_k$ that these elements are as given in the statement of the proposition.  This is a straightforward verification.
\end{proof}

We know from its construction that 
$\B_k\in \ope{S}(\F_{\SA_k}(s))$. 
We can say more about how $\B_k$ sits inside $\F_{\SA_k}(s)$,
\textit{alias} $\E(\twiddlek 
^{\!\!s})$.  
 We present the terms in the free variables $X_1,\ldots,X_s$ that correspond to the map $\pi_j\circ\mu$,  for $j=1,\ldots,s$. To this end
 we need to find terms $G_1,\ldots,G_s$ such that $\mu(a_1,\ldots,a_s)=(G_1(a_1,\ldots,a_s),\ldots,G_s(a_1,\ldots,a_s))$ for each element $(a_1,\ldots,a_s)\in\twiddlek 
^{\!\!s}$. 

Consider   the following Sugihara terms:   
\begin{itemize}
\item
$|x|:= x\to x$ 
\item
$x\leftrightarrow y := (x\to y)\wedge(y\to x)$ 
\end{itemize}
and, for  $1\leq i\leq s$,
\begin{itemize}
\item $S_i(X_1,\ldots, X_s)=\bigwedge\bigl\{\bigvee\{|X_i|\mid X_i\in S\}\mid S\subseteq \{X_1,\ldots,X_s\}\mbox{ and } |S|=i\bigr\}$ 
\item$T_i(X_1,\ldots, X_s)$,  defined by
\[
T_i(X_1,\ldots, X_s) = \begin{cases}
S_1 &\mbox{ if }i=1,\\
\neg(S_i\leftrightarrow S_{i-1})\vee S_1 &\mbox{ if }i>1
\end{cases}
\]
\item $G_j(X_1,\ldots, X_s)=S_j(T_1(X_1,\ldots, X_s),\ldots,T_s(X_1,\ldots, X_s))$
\end{itemize}

It is easy to observe that the action of $|x|$ on elements of $\Zed_k$ is precisely to send each $a$ to~$a$ if $a\geq 0$ and to $-a$ if $a<0$. 
To mimic the way $\mu$  operates we need to perform coordinate manipulations 
in order to arrange  the components of $s$-tuples  in increasing order.
This is exactly what the terms $S_j$ enable us to do. 
Finally we must replace a tuple with non-negative components  which increase non-strictly 
by one in  which  only the smallest coordinate is repeated.
To understand how the behaviour  of the term $T_i$ allows us to do this,    first  observe that if $0\leq a\leq b$ then $\neg(a\leftrightarrow b)$ is equal to $-b$ if $a=b$ and to $b$ if $a<b$. 
Hence the 
tuple  $(T_1(a_1,\ldots,a_s),\ldots,T_s(a_1,\ldots,a_s))$ is obtained from the 
tuple $(S_1(a_1,\ldots,a_s),\ldots,S_s(a_1,\ldots,a_s))$ by working iteratively to  
replace any $S_i(a_1,\ldots,a_s)$ that coincides with $S_{i-1}(a_1,\ldots,a_s)$ by $S_1(a_1,\ldots,a_s)$, \textit{viz.} $\bigwedge\{|a_1|,\ldots,|a_s|\}$.
We finally arrive at 
$\mu(a_1,\ldots,a_s) = (G_1(a_1,\ldots,a_s),\ldots, G_s(a_1,\ldots,a_s))$.
Finally, the assignment $b_j^k\mapsto G_j$ extends to an embedding from  $\B_k$ to $\F_{\SA_k}(s)$.

We indicated at the outset that the introduction of the notion of admissibility algebra stemmed from the desire to find a `small' generating algebra of $\ISP(\F_{\SA_k}(s))$ which can be used to check the admissibility of quasi-equations.  Our application of TSM delivers~$\B_k$, the smallest possible such algebra.  
Observe  that we have exhibited an explicit description of each $\B_k$, in terms of a generating set,  of the subalgebra  of $\Zed_{k}^s $ in which it sits.  This can be seen as a standalone presentation of this  algebra, 
not involving the duality methodology we used to arrive at it. 
Denote by 
$p_s$ the restriction to~$\B_k$ of the projection map of~$\Zed_k^s$ onto the last coordinate.  It is easy to see that~$p_s$ is surjective, so that 
$\Zed_k\in \ope{H}(\B_k)$. 
From this we can prove algebraically  that $\ISP(\B_k)=\ISP(\F_{\SA_k}(s))$ (see \cite[Corollary~22]{MR13}). 
 This direct, duality-free,  argument confirms  that $\B_k$ can be employed for admissibility testing, but does not yield the stronger result that is the minimal such algebra.


It is fitting that we should end our paper with a brief discussion of  the problem that inspired it:  admissibility of rules in the context of the logic R-mingle. 
We shall use the following interpretations of terms into quasi-equations. To any formula $\alpha$ we assign the equation $\alpha {\eq}\, \alpha\to \alpha$. 
 (With this interpretation every logical rule becomes a quasi-equation. Moreover this is the interpretation that proves that R-mingle is algebraizable and that its equivalent algebraic semantics are Sugihara algebras;  see Dunn \cite[Section 2]{Du70}, 
Blok and Dziobak \cite{BD86}, and also Font \cite[p.~141]{AAL}.)
We  assume below that $k>4$ since $\SA_2$ is structurally complete (that is, every admissible quasi-equation in $\SA_2$ is valid on every algebra)  and for $k=3,4$ every admissible quasi-equation that is not derivable is so because its antecedents are not unifiable. 
A quasivariety with 
the latter property is called \defn{almost structurally complete}. 
In \cite[Section~4.5]{Roe13}
 $\SA_3$ was proved to be almost structurally
complete. The proof that  $\SA_4$ is almost structurally complete follows from \cite[Theorem 18]{MR13} combined with the fact that its admissibility algebra is $\B_{4}=\Zed_4\times \Zed_2$). 

In Table~\ref{table:Sugcasestud2} we  present a selection  of examples of quasi-equations and decide their admissibility using the admissibility algebras 
we have identified.

 \begin{table}[ht]
\begin{center}
\begin{tabular}{p{6.5cm}cc}
  	\qquad \qquad quasi-equation \qquad \qquad & \multicolumn{2}{c}{admissible?} \\[-0.1cm]
                      &  {even case} & {odd case} \\
\hline
$p\leftrightarrow \neg p\vdash q\leftrightarrow r$& $\checkmark$ & $\times$\\[-0.1cm]
$p, \neg p \vee q  \vdash q$ & $\checkmark$ & $\times$\\[-0.1cm]
 $p, ( p \to |q| )\to (p\to q) \vdash p\to q $ &$\checkmark $ & $\checkmark $\\[-0.1cm]
$q,p \to (q\to r) \vdash p\to r $& $\checkmark  $& $\checkmark$ \\[-0.1cm]
$\neg|p| \vee q \vdash q $& $\checkmark  $& $\times$ 
 \end{tabular}
\end{center}
\caption{Sugihara algebras: admissible rules\label{table:Sugcasestud2}}

\end{table}

Observe that $B_{2n}$ is (isomorphic to) a subalgebra of $B_{2n+1}$. Hence a quasi-equation that is admissible  in every quasivariety $\SA_{2n+1}$ must  also be admissible on every  $\B_{2n}$. The converse does not hold,  as the results in the table show.

We have already noted that the problem of finding axiomatizations for the admissible rules of R-mingle remains open.  Similarly, although Metcalfe~\cite{GM16} 
succeeded in axiomatizing the admissible rules of various
fragments of R-mingle, these did not include 
the extensions $\text{RM}_k$ for $k > 4$. This unsolved 
problem could be approached by first 
axiomatizing the quasivariety generated by the admissibility
algebra  $\B_k$ for~$\SA_k$.  We hope that the recursive specification of this family of algebras might be of assistance 
here.

\section*{References}
\bibliographystyle{elsarticle-num} 


\end{document}